\documentclass[11pt,reqno]{amsproc}  \usepackage[colorlinks,linkcolor=Green,citecolor=blue]{hyperref}
\usepackage{lineno}
\usepackage{times}
\usepackage{amsmath}
\usepackage{amssymb}
\usepackage{mathrsfs}
\usepackage{amsthm}
\usepackage{bm}
\usepackage{subcaption}
\usepackage{graphicx}
\usepackage{algorithm} 
\usepackage{algpseudocode}
\usepackage{color}
\definecolor{Green}{RGB}{0,128,0}
\usepackage{enumitem}
\usepackage{tikz}
\usetikzlibrary{positioning, arrows.meta, calc, shadows}
\usepackage{amsmath, amssymb} 
\usepackage{float}
\usepackage{geometry}
\newtheorem{Def}{Definition}[section]
\newtheorem{lemma}[Def]{Lemma}
\newtheorem{assumption}[Def]{Assumption}
\newtheorem{corollary}[Def]{Corollary}

\newtheorem{theorem}[Def]{Theorem}
\newtheorem{example}[Def]{Example}

\newtheorem{remark}[Def]{Remark}

\usepackage{color,xcolor}
\definecolor{Green}{RGB}{46 139 87} 

\newcommand{\ud}{\mathrm d}
\newcommand{\R}{\mathbb{R}}
\newcommand{\C}{\mathbb{C}}
\numberwithin{equation}{section}

\newcommand{\E}{\mathbb{E}}\allowdisplaybreaks[4]

\begin{document}

\title[FEM for SPDE on graph]{
A regularized truncated finite element method
for degenerate parabolic stochastic PDE on non-compact graph}

\subjclass[2010]{35Q83, 60H15, 60H35, 65C30, 65J08.}
\author{Jianbo Cui}
\address{Department of Applied Mathematics, The Hong Kong Polytechnic University, Hung Hom, Kowloon, Hong Kong}
\email{jianbo.cui@polyu.edu.hk}
\author{Mih\'aly Kov\'acs}
\address{Department of Mathematical Sciences, Chalmers, Gothenburg, SE-41296, Sweden;
Department of Analysis and Operations Research, University of Technology and Economics, Műegyetem rkp. 3-9, Budapest, H-1111, Hungary;
Faculty of Information Technology and Bionics, Pázmány Péter Catholic University, Práter u. 50/a, Budapest, H-1083, Hungary}
\email{mkovacs@math.bme.hu}

\author{Derui Sheng}
\address{Department of Applied Mathematics, The Hong Kong Polytechnic University, Hung Hom, Kowloon, Hong Kong}
\email{sdr@lsec.cc.ac.cn}

\email{}

\thanks{The research  is partially supported by NSFC grant 12522119, NSFC grant 12301526, MOST National Key \& Program No.
2024YFA1015900, the Hong Kong Research Grant Council GRF grant 15302823, NSFC/RGC Joint Research Scheme N\_PolyU5141/24, the internal
grants (P0045336, P0046811) from the Hong Kong Polytechnic University, and the CAS AMSS-PolyU Joint Laboratory of Applied Mathematics.}

\keywords{Analytic semigroup, truncation, regularization, finite element method}

\begin{abstract}
We study the numerical approximation of a class of degenerate parabolic stochastic partial differential equations on non-compact metric graphs, which naturally arise in the asymptotic analysis of Hamiltonian flows under small noise perturbations. The numerical discretization of these equations faces several challenges, including the non-compactness of the graph, the degeneracy of the differential operator near vertices, and the non-symmetry of the associated bilinear form. To address these issues, we propose a multi-step numerical strategy combining graph truncation, localized coefficient regularization, and finite element spatial discretization. By incorporating localization techniques, tightness arguments and resolvent estimates, we establish the strong  convergence of the proposed scheme in a weighted $L^2$-space. Our results provide a systematic methodology that is potentially extensible to more general non-compact graphs and degenerate operators.
\end{abstract}

\maketitle
\section{Introduction}
A graph $\Gamma$ consists of a finite set of vertices $\{O_i\}$ and a finite set of edges $\{I_k\}$. Each edge $I_k$ connects a pair of vertices and can be isometrically identified with an interval $[a_k,b_k]$ with $-\infty < a_k\le b_k\le\infty$. A location $(z,k)\in\Gamma$ on the graph represents a position $z\in [a_k,b_k]$ on the edge $I_k$. We assume that the graph $\Gamma$ is connected, so that there is a path between any two vertices on the graph. Equipped with the shortest path distance, defined as the length of the shortest path connecting any two locations, $\Gamma$ becomes a metric graph.
Stochastic partial differential equations (PDEs) on metric graphs have recently emerged as fundamental modeling tools in neuroscience, physics, and network dynamics. They provide a continuum formulation of stochastic PDEs on discrete graphs (see, e.g., \cite{MR4612606, CLZ23}) and can describe, for example, the propagation of electrical signals along dendritic trees, where stochastic impulsive inputs account for excitatory and inhibitory influences from neighboring neurons \cite{BMZ08, BM10}. Beyond neuroscience, stochastic PDEs on graphs appear in a wide range of applications, including free-electron models for organic molecules \cite{LP36}, superconductivity in granular and engineered materials \cite{AS83}, wave propagation in acoustic and electromagnetic networks \cite{CRH87}, Anderson transitions in disordered wires \cite{AMR20, SB82}, quantum chaos \cite{BK13}, and statistical modeling \cite{BSW24}.

A particularly important class of stochastic PDEs on graphs arises from the averaging principle for stochastic perturbations of Hamiltonian flows. This principle, initially introduced in \cite{FW94} for small noise perturbations, has been extensively developed in subsequent works (see, e.g., \cite{FM98, FM01, FW04, FW12}). As the perturbation scale tends to zero, the effective dynamics is no longer confined to the original Euclidean space but instead evolves on a graph $\Gamma$, obtained by collapsing each connected component of a level set of the Hamiltonian $H$ into a single point. More precisely, the limiting dynamics along the edges $\{I_k\}_{k=1}^m$ of $\Gamma$ is governed by a second-order differential operator
\begin{equation}
	\mathcal{L} f(z,k)
	= \frac{1}{2\beta_k(z)} \frac{\ud}{\ud z}
	\Bigl( \alpha_k(z) \frac{\ud f}{\ud z}(z,k) \Bigr),
	\qquad  \text{for } z\in (a_k,b_k),k=1,2,\ldots,m
\end{equation}
with Kirchhoff-type gluing conditions at interior vertices (see \eqref{eq:glue}). Here, $\alpha_k$ and $\beta_k$ are functions determined by the Hamiltonian $H$ (see \eqref{eq:AT}).

In this paper, we study the following stochastic PDE on the graph $\Gamma$:
\begin{equation}\label{eq:SPDE}
	\partial_t u(t,z,k)
	= \mathcal{L} u(t,z,k)
	+ b(u(t,z,k))
	+ g(u(t,z,k))\partial_t W(t,z,k),
\end{equation}
for $t\in(0,T]$ and $(z,k)\in\Gamma$ with the initial condition $u(0,z,k)=u_0(z,k)$. Here, the functions $b, g : \R \to \R$ are Lipschitz continuous, and $W$ denotes a $Q$-Wiener process on an appropriate Hilbert space on the graph $\Gamma$ (see Section \ref{S:Pr} for further details). The model \eqref{eq:SPDE}, first proposed in \cite{CF19}, describes the
asymptotic behavior of particles moving along an incompressible flow in $\R^2$, incorporating the effects
of small viscosity, the flow pattern, and a slow chemical reaction involving the particles; see also \cite{CX21}. We also refer to \cite{CH24,CS25} for the sample path large and moderate deviation principles of stochastic PDEs on graphs with small noise.

The absence of analytic solutions for stochastic PDEs on graphs, particularly in the nonlinear regime,  necessitates the development of robust computational methods. However, the nontrivial topology of the underlying graph and the diversity of vertex gluing conditions render the discretization of stochastic PDEs on graphs substantially more intricate than those in the Euclidean setting. To the best of our knowledge, the only existing works in this direction are \cite{BKKS24,CS24}. For a class of stochastic fractional elliptic PDEs on compact metric graphs, the authors in \cite{BKKS24} proposed and analyzed a numerical method which  combines  finite element approximation with a rational approximation of the fractional power. In the special case where the Hamiltonian $H$ admits a unique critical point, \cite{CS24} analyzed the convergence and asymptotic-preserving properties of a temporal semi-discretization for \eqref{eq:SPDE}  based on the exponential Euler scheme.

In the present work, we address the spatial discretization of \eqref{eq:SPDE}, which raises several challenges. First, in many physically relevant settings, the Hamiltonian acts as a confining potential, leading to a graph $\Gamma$ that contains an unbounded edge. Second, the operator $\mathcal{L}$ exhibits qualitatively distinct behavior at different types of vertices, depending on whether the corresponding critical point of the Hamiltonian is a saddle point or a local extremum. In particular, the coefficient $\alpha_k(z)$ degenerates linearly near exterior vertices, while $\beta_k(z)$ diverges logarithmically near interior vertices (see Remark \ref{rem:AT}), and thus  $\mathcal{L}$ degenerates in the neighborhood of each vertex on the graph $\Gamma$. Last but not least, the equation \eqref{eq:SPDE} is well-posed in a weighted $L^2$-space, where the bilinear form associated with $\mathcal{L}$ is not symmetric, which further complicates the numerical analysis.

Using the asymptotic behavior of Hamiltonian flows under small perturbations, \cite{CF19} showed that $\mathcal{L}$ generates a strongly continuous semigroup on a weighted $L^2$-space $L^2_{\beta\gamma}(\Gamma)$ for a class of edge-independent weight functions $\gamma : \Gamma \to \mathbb{R}^+$. In Section \ref{S:WR}, we extend this result by operator semigroup theory to accommodate more general edge-dependent weight functions $\gamma$. We show that $\mathcal{L}$ not only generates a strongly continuous semigroup but also an analytic semigroup on $L^2_{\beta\gamma}(\Gamma)$ (see Theorem \ref{lem:operatorL}), a property essential for the numerical analysis of \eqref{eq:SPDE}. Moreover, we establish the well-posedness and regularity estimates of the mild solution to \eqref{eq:SPDE} in the weighted $L^2$-space for edge-dependent weights, thereby generalizing existing well-posedness results stated in \cite[Section 5]{CF19} and \cite[Section 2.4]{CS24}.

The non-compactness of the graph $\Gamma$ and the degeneracy of the operator $\mathcal{L}$ present significant challenges for the direct numerical treatment of \eqref{eq:SPDE}. To overcome these difficulties, we propose a three-step numerical strategy  in Section \ref{S:main} (see Fig.~\ref{Fig:map}). 
\begin{figure}[htbp]
    \centering
    \resizebox{0.7\textwidth}{!}{
        \begin{tikzpicture}[
            node distance=1.6cm, 
            block/.style={
                rectangle, 
                draw=blue!60!black, 
                thick, 
                fill=blue!5,        
                text width=11cm,    
                text centered, 
                minimum height=1cm, 
                rounded corners=3pt,
            },
            arrow_down/.style={-Stealth, thick, color=blue!70!black},
            arrow_up/.style={-Stealth, thick, color=red!70!black}, 
            description/.style={align=left, font=\footnotesize\sffamily, text width=5cm}
        ]
        \node [block] (original) {Degenerate stochastic PDE \eqref{eq:SPDE} on non-compact graph $\Gamma$};
        
        \node [block, below=of original, fill=blue!8] (step1) {Degenerate stochastic PDE \eqref{eq:local} on compact graph $\Gamma^R$};
        
        \node [block, below=of step1, fill=blue!11] (step2) {Non-degenerate stochastic PDE \eqref{eq:reguarlized} on compact graph $\Gamma^R$};
        
        \node [block, below=of step2, fill=blue!14] (step3) {Regularized truncated finite element approximation $u^{R,\delta}_h$ for \eqref{eq:SPDE}};

        \draw [arrow_down] ([xshift=2cm]original.south) -- node[right=0.2cm, description] {
            \textbf{Step 1}. Graph truncation  \\ \phantom{Step 2. }with parameter $R>1$
        } ([xshift=2cm]step1.north);

        \draw [arrow_up, dashed] ([xshift=-2cm]step1.north) -- node[midway, left=0.2cm, font=\small\bfseries\color{red!70!black}] {
            $R \to \infty$
        } ([xshift=-2cm]original.south);

        \draw [arrow_down] ([xshift=2cm]step1.south) -- node[right=0.2cm, description] {
            \textbf{Step 2}. Local regularization \\\phantom{Step 2. } with parameter $\delta>0$
        } ([xshift=2cm]step2.north);

        \draw [arrow_up, dashed] ([xshift=-2cm]step2.north) -- node[midway, left=0.2cm, font=\small\bfseries\color{red!70!black}] {
            $\delta\to0$
        } ([xshift=-2cm]step1.south);

        \draw [arrow_down] ([xshift=2cm]step2.south) -- node[right=0.2cm, description] {
           \textbf{Step 3}. Finite element discretiza-\\\phantom{Step 2. }tion with step size $h>0$
        } ([xshift=2cm]step3.north);

        \draw [arrow_up, dashed] ([xshift=-2cm]step3.north) -- node[midway, left=0.2cm, font=\small\bfseries\color{red!70!black}] {
          $h\to 0$
        } ([xshift=-2cm]step2.south);

        \end{tikzpicture}
    } 
    \caption{The three-step numerical strategy for approximating the solution to \eqref{eq:SPDE}.}
    \label{Fig:map}
\end{figure}
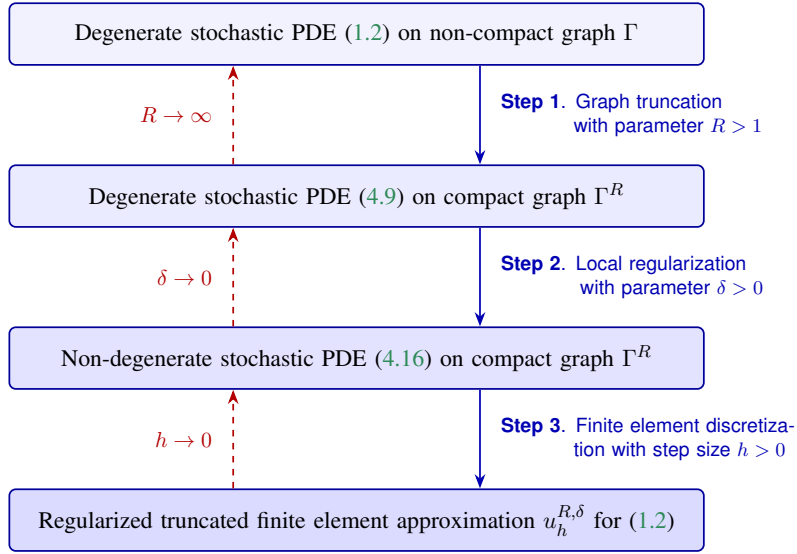
First, using a cut-off function $\eta_R:\Gamma\to\R^+$, we introduce a truncated problem (see \eqref{eq:uR}) in which the dynamics is frozen on the unbounded subgraph $\Gamma/\Gamma^R$, where $\Gamma^R = \textup{supp}(\eta_R)$ is a compact subgraph. Restricted to the compact graph $\Gamma^R$, the truncated problem is governed by a truncated operator $\check{\mathcal{L}}$ preserving the Kirchhoff-type gluing conditions at interior vertices (see \eqref{eq:checkL}). 
Second, to address the degeneracy of the coefficients $\alpha_k^R=\alpha_k\cdot\eta_R$ and $\beta_k^{-1}$ near the vertices of $\Gamma^R$, we introduce a localized regularization for the truncated operator $\check{\mathcal{L}}$, yielding a uniformly elliptic operator $\check{\mathcal{L}}^\delta$ on $\Gamma^R$ for each fixed regularization parameter $\delta>0$.
Third, for fixed $R$ and $\delta$, the regularized truncated problem (see \eqref{eq:reguarlized}) constitutes a non-degenerate stochastic parabolic PDE on the compact graph $\Gamma^R$, which is discretized in space using the finite element method with a mesh size $h>0$ (see \eqref{eq:udeltah}). This procedure yields a regularized truncated finite element approximation $u^{R,\delta}_h$ of the mild solution $u$ to \eqref{eq:SPDE}, and our main goal is to establish the following convergence result
\begin{equation}\label{eq:main}
	\lim_{R\to\infty}\lim_{\delta\to0}\lim_{h\to 0}\E\left[\|u(t)-u^{R,\delta}_h(t)\|^2_{L^2_{\beta\gamma}(\Gamma)}\right]=0,\quad\forall~ t\in(0,T].
\end{equation}

The proof of \eqref{eq:main} relies on Theorems \ref{tho:truncate}, \ref{tho:con-in-pro} and \ref{tho:FEM}, each of which uses the  properties of the generator $\mathcal{L}$ established in Theorem~\ref{lem:operatorL}.
In Theorem \ref{tho:truncate},
we derive an upper bound for the truncation error in terms of the diffusion coefficients of $\mathcal{L}$ and the graph weight function $\gamma$, whose proof is presented in Section \ref{S:Trun}. In particular, the truncation error tends to $0$ as the truncation parameter $R$ goes to infinity, provided that $\gamma$ decays  at least quadratically at infinity.

The most delicate part in proving \eqref{eq:main} is the convergence analysis of the regularization procedure (Theorem \ref{tho:con-in-pro}), which is carried out in  Section \ref{S:Reg}
by a tightness argument.
First, based on the specific constructions of the regularized coefficients
$
\alpha^{R,\delta}=\{\alpha_k^{R,\delta}\}_{k=1}^m$
and $
\beta^\delta=\{\beta_k^{\delta}\}_{k=1}^m,
$
we establish uniform energy estimates for the solution $\check{u}^\delta$ of the regularized truncated equation \eqref{eq:reguarlized} in a weighted Sobolev space
$
W^{1,2}_{\alpha^{R,\delta}\gamma,\beta^\delta\gamma}(\Gamma^R) 
$ 
independent of $\delta$ (see \eqref{eq:udeltaee}). A major difficulty arises from the non-symmetry of the regularized operator $\check{\mathcal{L}}^\delta$ on $L^2_{\beta^\delta\gamma}(\Gamma^R)$, which complicates control of the stochastic convolution.
To overcome this issue, we exploit the positivity and continuity of $\gamma$ on the compact graph $\Gamma^R$ and reduce the analysis to the case $\gamma \equiv 1$ where the fractional norm
$
\|(-\mathcal{L}^\delta)^{1/2}\cdot\|_{L^2_{\beta^\delta\gamma}(\Gamma^R)}
$
provide an equivalent norm of the energy space
$W^{1,2}_{\alpha^{R,\delta}\gamma,\beta^\delta\gamma}(\Gamma^R)$ (see \eqref{eq:self-adj} for more details). This enables 
us to control the stochastic convolution uniformly in $\delta$ (see Lemma \ref{lem:rHolder}), by making full use of the smoothing properties of the associated semigroup (see Lemma \ref{lem:smo}).
Using the integrability of the possible logarithmic singularity of $\beta_k$, we further refine the estimate to show that $\check{u}^\delta$ is uniformly bounded in
$
W^{1,2}_{\alpha^{R}\gamma,\beta\gamma}(\Gamma^R)
$
with respect to $\delta$. The tightness argument then reduces to proving  the compactness of the Sobolev embedding
$
W^{1,2}_{\alpha^R\gamma,\beta\gamma}(\Gamma^R)
\hookrightarrow
L^2_{\beta\gamma}(\Gamma^R),
$
which is nontrivial due to the potential degeneracy of $\alpha_k^R$ and the possible blow-up of $\beta_k$.
To control the decay rate of $\alpha_k^R=\alpha_k\cdot\eta_R$ near the boundary of $\mathrm{supp}(\eta_R)$, we choose a linearly decaying cut-off function $\eta_R$ rather than using a standard $\mathcal{C}_c^\infty$ cut-off function. This allows us to apply \cite[Claim 1 in Lemma 5.6]{CS25} and prove the tightness of the law of $\check{u}^\delta$ in
$
\mathcal{C}([0,T];L^2_{\beta\gamma}(\Gamma^R))
$ (see Lemma \ref{lem:tight}).
Hence, we can prove the convergence of the regularized truncated equation \eqref{eq:reguarlized},
 by combining tightness with the Gy\"ongy--Krylov characterization of convergence in probability (see \cite[Lemma 1.1]{GK96} for more details).

Section \ref{S:FEM} is devoted to the proof of Theorem \ref{tho:FEM} on the error of the finite element approximation of the regularized truncated equation \eqref{eq:reguarlized}. 
To handle the non-symmetry of the bilinear form associated with $\check{\mathcal{L}}^\delta$, we analyze the convergence of the discretization error in $L^2_{\beta^\delta\gamma}(\Gamma^R)$ using a resolvent-based approach, inspired by  \cite{JLZ13}.
Another key contribution of this part consists in interpolation error estimates in the energy norm (see  Lemma \ref{lem:int-L}) and in introducing a suitable shifting argument to treat the case where the operator $\check{\mathcal{L}}^\delta$ is merely bounded from above, rather than being negative semidefinite.
Combining an interpolation result (see \eqref{eq:L2beta}) with a priori estimates for the solutions, we establish the convergence rate of the finite element method in
$L^2_{\beta\gamma}(\Gamma^R)$. 

Finally, we summarize the main contributions of this work as follows. 
To numerically solve the degenerate parabolic stochastic PDE \eqref{eq:SPDE} on non-compact graph $\Gamma$, we propose a numerical strategy combining truncation, regularization and the finite element discretization, and establish the convergence of the resulting numerical solution. Although our analysis focuses on the model \eqref{eq:SPDE}, we believe that the numerical strategies and analytical techniques presented here can be potentially extended to handle more general non-compact graphs and degenerate operators.
Since the main focus of this work is the construction and convergence analysis of the spatial discretization of \eqref{eq:SPDE}, the development of fully discrete schemes and the corresponding numerical experiments are left for future work. For readers interested in the implementation of finite element discretizations for (stochastic) PDEs on graphs, we refer to \cite{AB18,BKKS24} for further details. In particular, the authors in \cite{AB18} proposed a representation of the stiffness matrix associated with the graph Laplacian based on the incidence matrix of the so-called extended graph, where spatial grid points are interpreted as additional vertices. This yields a structured algebraic formulation that facilitates the implementation of finite element discretization on graphs.

\section{Preliminaries}\label{S:Pr}
In this section, we provide a brief overview of the background and mathematical setting of the stochastic PDE on the graph $\Gamma$.
\subsection{Notation}
Let $\Gamma$ be a connected metric graph consisting of a finite set of vertices $\{O_i\}$ and a finite set of edges $\{I_k\}$, equipped with the shortest path distance.
Each edge $I_k$ is identified with an interval $[a_k,b_k]$ with $-\infty\le a_k<b_k\le \infty$, denoted by $I_k\cong [a_k,b_k]$. A location $(z,k)\in  \Gamma$ on the graph represents a position $z\in [a_k,b_k]$ on the edge $I_k$. Since $I_k\cong [a_k,b_k]$,  we sometimes write $(z,k)\in I_k$ (resp.~$(z,k)\in \mathring{I}_k$) to indicate  a location on $I_k$ with $z\in [a_k,b_k]$ (resp.~$z\in(a_k,b_k)$). A function $f$ defined on the graph $\Gamma$ is said to be continuous at a vertex $O_i$ if for all edges $\{I_k\}$ incident to $O_i$, the corresponding coordinate functions $\{f(\cdot,k)\}$ attain the same value at $O_i$.

Let $\mu=\{\mu_k\}_{k=1}^m$ and $\nu=\{\nu_k\}_{k=1}^m$ be two weight functions on $\Gamma$, that is,
$\mu_k,\nu_k : I_k \to [0,\infty)$ are measurable functions for each $k=1,2,\ldots,m$.
For any subinterval $J \subset I_k$, we define $L^2_{\nu_k}(J)$ as the space of complex-valued functions on $J$
equipped with the inner product
$$
\langle w,v\rangle_{L^2_{\nu_k}(J)}
:= \int_J w(z)\overline{v(z)}\nu_k(z)\ud z ,
\qquad w,v \in L^2_{\nu_k}(J),
$$
where $\overline{v(z)}$ denotes the complex conjugate of $v(z)$.
The weighted Sobolev space $W^{1,2}_{\mu_k,\nu_k}(J)$ consists of all functions
$w \in L^2_{\nu_k}(J)$ whose weak derivative
$w':=\frac{\ud}{\ud z}w\in L^2_{\mu_k}(J)$, endowed with the inner product
$$
\langle w,v\rangle_{W^{1,2}_{\mu_k,\nu_k}(J)}
:=
\langle w,v\rangle_{L^2_{\nu_k}(J)}
+
\langle w',v'\rangle_{L^2_{\mu_k}(J)},\quad w,v \in W^{1,2}_{\mu_k,\nu_k}(J).$$ 
We then define the graph-weighted spaces by the direct sums
$$
L^2_{\nu}(\Gamma)
:= \bigoplus_{k=1}^m L^2_{\nu_k}(I_k),
\qquad
W^{1,2}_{\mu,\nu}(\Gamma)
:= \bigoplus_{k=1}^m W^{1,2}_{\mu_k,\nu_k}(I_k).
$$
In the special case $\mu=\nu\equiv 1$, we omit the weights and write
$L^2(J)$ and $W^{1,2}(J)$ for the spaces on each edge, and
$L^2(\Gamma)$ and $W^{1,2}(\Gamma)$ for the corresponding function spaces on the graph $\Gamma$. 
Besides, we denote by $W^{1,2,c}_{\mu,\nu}(\Gamma)$ the subspace of $W^{1,2}_{\mu,\nu}(\Gamma)$ consisting of functions that are continuous at each interior vertex on the graph $\Gamma$.

Two functions $\varphi$ and $\psi$ are said to be asymptotically equivalent as $x \to x_0$ if
$\lim_{x \to x_0} \varphi(x)/\psi(x) = 1$, in which case we write $\varphi(x) \sim \psi(x)$ as $x \to x_0$.
 Given two Hilbert spaces $V_1$ and $V_2$, we denote by $\mathscr{L}(V_1)$ the space of bounded linear operators from $V_1$ to itself, equipped with the operator norm 
 \begin{align*}
 \|A_1\|_{\mathscr{L}(V_1)}:=\sup_{\|x\|_{V_1}\le 1}\|A_1x\|_{V_1},\quad A_1\in \mathscr{L}(V_1),
 \end{align*} and by $\mathscr{L}_2(V_1, V_2)$ the space of Hilbert--Schmidt operators from $V_1$ to $V_2$ endowed with the Hilbert--Schmidt norm
 \begin{align*}
 \|A_2\|_{\mathscr{L}_2(V_1, V_2)}:=\Big(\sum_{i=0}^\infty\|A_2f_i\|_{V_2}^2\Big)^{\frac12},\quad A_2\in\mathscr{L}_2(V_1, V_2),
 \end{align*}
 where $\{f_i\}_{i\in\mathbb{N}}$ is any complete orthonormal basis of $V_1$
  (see \cite{DZ14} for more details). 
  
\subsection{Mathematical setting}
Let $-H :\R^2\to \R$ be a stream function that describes the flow pattern of a fluid or gas in two-dimensional space, and let $\mathscr{W}$ a spatially homogeneous Wiener process on $\R^2$ with finite spectral measure $\mu$. For $\epsilon>0$,
consider a stochastic PDE 
\begin{align}\label{eq:SRDA}
    \partial_t u_\epsilon(t,x)=\mathcal{L}_\epsilon u_\epsilon(t,x)+b(u_\epsilon(t,x))+g(u_\epsilon(t,x))\partial_t \mathscr{W}(t,x),\quad t\in[0,T],
\end{align}
 subject to a suitable initial condition.
Let $T>0,$ $b,g:\R^2\to\R$ be Lipschitz continuous, 
assume that $ H :\R^2\to \R$ satisfies the following conditions (see {\cite[Hypothesis 1]{CF19}}).
\begin{enumerate}
\item[(i)] $ H \in \mathcal{C}^4(\R^2)$ with a bounded second derivative and
$\min _{x \in \mathbb{R}^2}  H (x)=0;
$
\item[(ii)] $ H $ has only a finite number of critical points $ \mathbf{x}_1, \ldots, \mathbf{x}_{n}$. 
The Hessian matrix $\nabla^2  H (\mathbf{x}_i)$ is non-degenerate for every $i=1,2, \ldots, {n}$, and $ H (\mathbf{x}_i) \neq H (\mathbf{x}_j)$ if $i \neq j$;
\item[(iii)] There exist $\mathfrak{a}_1, \mathfrak{a}_2, \mathfrak{a}_3>0$ such that
 for all $x \in \mathbb{R}^2$ with $|x|$ large enough,
\begin{equation*}
 H (x) \ge \mathfrak{a}_1|x|^2,\quad|\nabla  H (x)| \ge \mathfrak{a}_2|x|,\quad\Delta  H (x) \ge \mathfrak{a}_3.
\end{equation*}
\end{enumerate}

 The linear operator
$\mathcal{L}_\epsilon=\frac1\epsilon\Delta+\nabla^\perp H\cdot\nabla$ is the infinitesimal generator of the diffusion process
\begin{equation*}
    \ud X_\epsilon(t)=\frac{1}{\epsilon}\nabla^\perp  H (X_\epsilon(t))\ud t+\ud \mathrm{B}(t),\quad t\in[0,T],
\end{equation*}
where $\mathrm{B}$ is a two dimensional Brownian motion, and $\nabla^\perp:=(-\partial_{x_2},\partial_{x_1})$. The multiscale reaction-diffusion-advection equation \eqref{eq:SRDA} models the motion of particles over the time interval $[0, T/\epsilon]$, transported by an incompressible flow in $\mathbb{R}^2$ with the stream function $-H$, incorporating the effects of small viscosity, the flow pattern, and a slow chemical reaction affecting the particles.

 For every connected component $\textup{C}_k(z)$ of a given level set $\textup{C}(z):=\{x\in\R^2: H (x)=z\}$ of the function $ H $, there is an invariant measure $\ud\mu_{z,k}(x)\propto|\nabla  H (x)|^{-1}\ud x$ for the Hamiltonian system $
 \dot{X}(t)=\nabla^\perp  H (X(t)).$  By  identifying all points in $\mathbb{R}^2$ in the same connected component $\textup{C}_k(z)$, we obtain a graph $\Gamma$ consisting of a finite set of vertices $\{O_i\}_{i=1}^{n}$ and a finite set of edges $\{I_k\}_{k=1}^m$. 
 Each vertex $O_i$ corresponds to a critical point $\mathbf{x}_i$ of the function $ H $, where interior vertices correspond to saddle points of $ H $, and exterior vertices correspond to local extremum points of $ H $. Each interior and exterior vertex is incident to three edges and a single edge, respectively.
 We also include $O_{\infty}$ among exterior vertices, the endpoint of the only unbounded interval $I_m$ on the graph, corresponding to the point at infinity (see Fig.~\ref{fig:Hamilton} for an illustration).  
 \begin{figure}[!htb]
	\centering
	\includegraphics[width=0.7\linewidth]{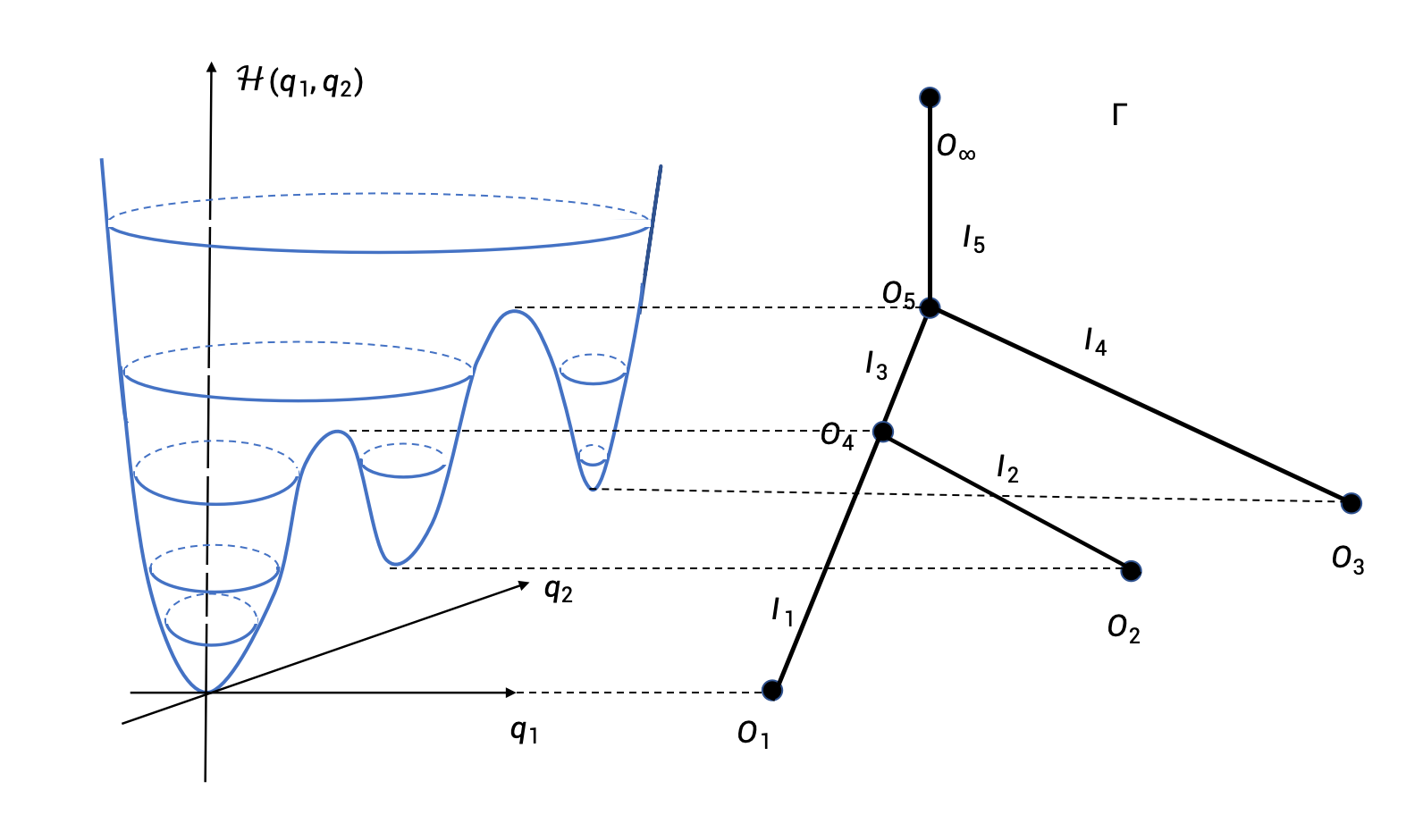}
	\caption{An example for the Hamiltonian $ H $  with three local minimizers ($O_1$, $O_2$, and $O_3$) and two saddle points ($O_4$ and $O_5$), and the graph $\Gamma$.}\label{fig:Hamilton}
\end{figure}

In what follows, we shall denote by $\Pi:\R^2\to\Gamma$ the identification map associating every $x\in\R^2$ to the corresponding point $\Pi(x)$ on the graph $\Gamma$. Then $\Pi(x)= ( H (x), k(x)),$ where $k(x) \in \{1,2,\ldots,m\}$ denotes the index of the edge $I_{k(x)}$ to which $\Pi(x)$ belongs.
Moreover, we have $I_m\cong[H_0,\infty]$, where $H_0:=\max_{1\le i\le {n}} H (\mathbf{x}_i)$; for $1\le k\le m-1$, $I_k\cong[H(\mathbf{x}_i),H(\mathbf{x}_j)]$, where $O_i$ and $O_j$ are the two vertices incident to $I_k$ ordered so that $H(\mathbf{x}_i)<H(\mathbf{x}_j)$. Endowed with the shortest-path distance, $\Gamma$ is a metric space.
By an averaging procedure with respect to the invariant measure $\mu_{z,k}$, it has been shown in \cite[Chapter 8]{FW12} that,
for any non-random initial value $X_\epsilon(0)\in\mathbb{R}^2$, the projected process
$\{\Pi(X_\epsilon(t))\}_{t\in[0,T]}$
converges in distribution to a Markov process
$\{\bar{Y}(t)\}_{t\in[0,T]}$ on the graph $\Gamma$.

For a function $\phi:\R^2\to \R$, we define its projection $\phi^\wedge:\Gamma\to\R$ by $(z,k)\mapsto\phi^\wedge(z,k)=\oint_{\textup{C}_k(z)}\phi(x)\ud \mu_{z,k}$.
  It has been shown in \cite{CF19} that as $\epsilon \to 0$, the asymptotic behavior of the projection $(u_\epsilon)^\wedge$ of the solution to  \eqref{eq:SRDA} is captured by the stochastic PDE \eqref{eq:SPDE}. The initial value $u(0)=u_0$ (resp. the driving process $W$) of \eqref{eq:SPDE} is the projection of the initial value (resp. the driving process $\mathscr{W}$) of \eqref{eq:SRDA}. 
The linear operator $\mathcal{L}$ in \eqref{eq:SPDE} is the infinitesimal generator of the limiting process $\bar{Y}$ given by 
\begin{equation}\label{eq:barL}
	\mathcal{L} f(z,k)
	= \frac{1}{2\beta_k(z)} \frac{\ud}{\ud z}
	\Bigl( \alpha_k(z) \frac{\ud f}{\ud z}(z,k) \Bigr),
	\qquad \text{if }(z,k)\in\mathring{I}_k,\quad k=1,\ldots,m.
\end{equation}
The coefficients in \eqref{eq:barL} are given by
\begin{equation}\label{eq:AT}
\alpha_k(z)=\oint_{\textup{C}_k(z)} |\nabla  H (x)| \ud l_{z, k},\quad \beta_k(z)=\oint_{\textup{C}_k(z)} \frac{1}{|\nabla  H (x)|}\ud l_{z,k},
\end{equation}
where $\ud l_{z,k}$ is the length element on $\textup{C}_k(z)$, and
$\beta_k(z)$ is the
 period of the motion on $\textup{C}_k(z)$.
The operator $\mathcal{L}$ is subject to gluing conditions, which require that 
$f$ is continuous at each interior vertex $O_i $ and satisfies the following Kirchhoff condition:
\begin{equation}\label{eq:glue}
	\sum_{k: I_k \sim O_i} 
	\alpha_k(H (\mathbf{x}_i))\ud_k f(H (\mathbf{x}_i),k) = 0 
	\quad \text{for each interior vertex } O_i,
\end{equation}
where $\ud_kf(H (\mathbf{x}_i),k):=\Phi_{k,i}\frac{\ud}{\ud z}f(H (\mathbf{x}_i),k)$ with
\begin{equation*}
\Phi_{k,i} =
\begin{cases}
+1, & \text{if } I_k  \text{ is incident to } O_i  \text{ and the $z$-coordinate decreases along $I_k$ towards $O_i$} ,\\[1mm]
-1, & \text{if } I_k  \text{ is incident to } O_i \text{ and the $z$-coordinate increases along $I_k$ towards $O_i$}.
\end{cases}
\end{equation*}
   The metric graph $\Gamma$ equipped with the operator $\mathcal{L}$ defined by \eqref{eq:barL} is also called a quantum graph \cite{AB18,BK13}. 

 \begin{remark}\label{rem:AT}
 	By \cite[Lemma 1.1]{FW12}, the functions $\beta_k$ and $\alpha_k$ are continuously differentiable in the interior $\mathring{I}_k$ of $I_k$. 
 	In contrast, $\alpha_k$ and $\beta_k$ possibly exhibit {singularities or degeneracy} near the vertices.
 	Indeed, if $(z,k)$ approaches an endpoint of an edge $I_k$, corresponding to a vertex $O_i=(H (\mathbf{x}_i),k)$, then by \cite[pp. 266-267]{FW12} and \cite[pp. 501]{CF19}, as $(z,k)\to O_i$, 
 	\begin{align}\label{eq:A}
 		&\alpha_k(z)\sim\begin{cases} \textup{const}\cdot|z-H (\mathbf{x}_i)|,\quad &\text{if $\mathbf{x}_i$ is a local extremum point},\\
 			\textup{const},\quad& \text{if $\mathbf{x}_i$ is a saddle point},\\
 			\textup{const}\cdot z,\quad &\text{if $O_i=O_\infty$};
 		\end{cases}\\\label{eq:T}
 		&\beta_k(z)\sim\begin{cases}\textup{const},&\text{if $\mathbf{x}_i$ is a local extremum point},\\
 			\textup{const}\cdot|\log|z- H (\mathbf{x}_i)||,& \text{if $\mathbf{x}_i$ is a saddle point},\\
 			\textup{const},& \text{if $O_i=O_\infty$}.
 		\end{cases}
 	\end{align}
 	Here \textup{const} denotes some positive constant depending on $k$ and $O_i$, and may differ from line to line. 
 \end{remark}

Recall that $\mathscr{W}$ a spatially homogeneous Wiener process on $\R^2$ with finite spectral measure $\mu$, i.e., 
\begin{align*}
\E\left[\mathscr{W}(t,x)\mathscr{W}(t,y)\right]=(t\wedge s)\Lambda(x-y),\quad t,s\in [0,T],~x,y\in \R^2,
\end{align*}
where $\Lambda$ is the Fourier transform of $\mu$ given by $\Lambda(x)=\int_{\R^2}e^{\mathbf{i} x\cdot\xi}\mu(\ud\xi) $ for $x\in \R^2$ with $\mathbf{i}$ being the imaginary unit. 
Let $\mathcal S(\R^2)$ be the set of smooth functions on $\R^2$ with rapid decrease.
The operator $\mathscr{Q}:\mathcal S(\R^2)\times\mathcal S(\R^2)\to\R$ defined by
$$\mathscr{Q}(\varphi_1,\varphi_2):=\int_{\R^2}\Lambda(x)\int_{\R^2}\varphi_1(y)\varphi_2(y-x)\ud y\ud x,\quad\varphi_1,\varphi_2\in\mathcal S(\R^2),$$
is called the covariance form of $\mathscr W$. 
By \cite[Section 1.2]{PZ97}, the reproducing kernel Hilbert space $(\mathcal{U},\|\cdot\|_{\mathcal{U}})$ of $\mathscr W$ can be identified with the dual of $\mathcal{S}_q $, where $\mathcal{S}_q $ is the completion of the set $\mathcal S(\R^2)/\ker \mathscr{Q}$ with respect to the norm $q([\varphi]):=\sqrt{\mathscr{Q}(\varphi,\varphi)}$.
Denote by $L^2_{(s)}(\R^2,\ud \mu)$ the subspace of the Hilbert space $L^2(\R^2,\ud \mu;\mathbb{C})$ consisting of all functions $\varphi$ satisfying $ \overline{\varphi(-x)}=\varphi(x)$ for $x \in\R^2$. Due to \cite[Proposition 1.2]{PZ97},
 an orthonormal basis for $\mathcal{U}$ is given by $\{\widehat{\mathfrak{u}_j\mu}\}_{j\in\mathbb{N}}$, where $\{\mathfrak{u}_j\}_{j\in\mathbb{N}}$ is a complete orthonormal basis of the Hilbert space $L^2_{(s)}(\R^2,\ud \mu)$ and
$\widehat{\mathfrak{u}_j\mu}(x)=\int_{\R^2}e^{\textup{i}x\cdot\xi} \mathfrak{u}_j(\xi)\mu(\ud\xi)$ for $x\in\R^2$.
This implies that $\mathscr W$ has the following Karhunen--Loève expansion
$\mathscr W(t,x)=\sum_{j\in\mathbb{N}}\widehat{\mathfrak{u}_j\mu}(x)\boldsymbol{\beta}_j(t)$ for $t \in[0,T]$ and $x\in\R^2$, where $\{\boldsymbol{\beta}_j\}_{j\in\mathbb{N}}$ is a sequence of independent Brownian motions defined on the stochastic basis $(\Omega,\mathscr F,\{\mathscr F_t\}_{t\in[0,T]},\mathbb P)$. By projection,  the driving process $W$ in \eqref{eq:SPDE}
is a Wiener process with the Karhunen--Loève expansion
$$
W(t,z,k)=\sum_{j\in\mathbb{N}}(\widehat{\mathfrak{u}_j\mu})^\wedge(z,k)\boldsymbol{\beta}_j(t),\quad (z,k)\in\Gamma.
$$
The sequence $\{(\widehat{\mathfrak{u}_j\mu})^\wedge\}_{j\in\mathbb{N}}$
forms a complete orthonormal basis of the Hilbert space
$\mathcal U_0:=\{\varphi^\wedge|\varphi\in\mathcal{U}\}$ endowed with the norm $\|\varphi^\wedge\|_{\mathcal{U}_0}:=\|\varphi\|_{\mathcal{U}}$. By the Parseval identity and the fact that $\{\mathfrak{u}_j\}_{j\in\mathbb{N}}$ forms a complete orthonormal basis of the Hilbert space $L^2_{(s)}(\mathbb{R}^2,\mathrm{d}\mu)$, and since $\mu$ is a finite measure, for any $x\in\R^2$ we have
\begin{equation}
\sum_{j\in\mathbb{N}} \left|\widehat{\mathfrak{u}_j \mu}(x)\right|^2
= \sum_{j\in\mathbb{N}} \left| \int_{\mathbb{R}^2} e^{\mathrm{i} x \cdot \xi} \mathfrak{u}_j(\xi)\mu(\mathrm{d}\xi) \right|^2
= \int_{\mathbb{R}^2} | e^{\mathrm{i} x \cdot \xi} |^2 \mu(\mathrm{d}\xi)
= \mu(\mathbb{R}^2) < \infty.
\end{equation}
Furthermore, by the Cauchy--Schwarz inequality, for any $(z,k)\in\Gamma$,
\begin{equation}\label{eq:e_i}
	\sum_{j\in\mathbb{N}} |(\widehat{\mathfrak{u}_j\mu})^\wedge(z,k)|^2 = \sum_{j\in\mathbb{N}} \Big|\oint_{\mathrm{C}_k(z)}\widehat{\mathfrak{u}_j\mu}(x)\ud\mu_{z,k}\Big|^2
	\le \sum_{j\in\mathbb{N}} \oint_{\mathrm{C}_k(z)}|\widehat{\mathfrak{u}_j\mu}(x)|^2\ud\mu_{z,k}\le \mu(\R^2).
\end{equation}

\section{Well-posedness and regularity of stochastic PDE on graph}\label{S:WR}
 Throughout the paper, we denote by $C$ a generic constant which may depend on several
parameters but never on the stepsize $h$ and may change from occurrence to occurrence. When
needed, we will explicitly write $C(a,b,\ldots)$ to emphasize the dependence of the constant $C$ upon
parameters $a,b,\ldots$ In particular, when $C$ depends on the truncation parameter $R$ or the regularization parameter $\delta$, we always explicitly write $C(R)$ or $C(\delta)$ to emphasize this dependence.

In this section, we establish the well-posedness of the stochastic PDE \eqref{eq:SPDE} on graph, under a weighted $L^2$-space $L^2_{\beta\gamma}(\Gamma)$, where $\gamma:\Gamma\to(0,\infty)$, $(z,k)\mapsto\gamma_k(z)$, is continuous on $\Gamma$ and continuously differentiable on the interior of each edge. 
This will rely on the properties of the operator $\mathcal{L}$ and its associated semigroup.

\subsection{Operator and semigroup}
 In this subsection, we show that the operator $\mathcal{L}$ generates an analytic semigroup 
 in the weighted $L^2$-space $L^2_{\beta\gamma}(\Gamma)$, under the following assumption. 
\begin{assumption}\label{asp:aT}
There exists a constant $\kappa>0$ such that 
\begin{equation}\label{eq:theta}
\alpha_k(z)|\gamma_k^\prime(z)|^2\le \kappa \beta_k(z)\gamma_k^2(z)\quad \text{for }(z,k)\in\mathring{I}_k,~ k=1,\ldots,m.
\end{equation}
\end{assumption}
The continuity and positivity of $\gamma$ implies the existence of a positive constant $C>0$ such that $C^{-1} \le \gamma_k \le C$ for all $k=1,2,\ldots,m-1$. 
In view of \eqref{eq:A} and \eqref{eq:T}, Assumption~\ref{asp:aT} will be satisfied provided that $\gamma$ satisfies the following condition
\begin{equation}\label{eq:gammader}
|\gamma_k'(z)| \le C,~ \text{if }(z,k)\in\mathring{I}_k,1\le k\le m-1;
~\text{and}~
\sqrt{z}|\gamma_m'(z)| \le C\gamma_m(z),~ \text{if }(z,m)\in\mathring{I}_m,
\end{equation}
where $C>0$ is a constant independent of $(z,k)$. Next we present some concrete examples of the graph weight function $\gamma$.

\begin{example}\label{ex:gammafunction}
For $(z,k)\in \Gamma$ with $k=1,\ldots,m-1$, set $\gamma_k(z)=1$.  
On the unbounded edge $I_m\cong[H_0,\infty)$, where $H_0=\max_{1\le i\le {n}} H (\mathbf{x}_i)$, 
we consider the following two choices:
\[
\text{(i)}\quad \gamma_m(z)=e^{-\rho_1 (z-H_0)^{\rho_2}}, 
\qquad
\text{(ii)}\quad \gamma_m(z)=(z-H_0+1)^{-\rho_3},
\]
where $\rho_1>0,0<\rho_2<\tfrac12$ and $\rho_3>1$.
One can readily verify that the inequality \eqref{eq:gammader} (and thus Assumption~\ref{asp:aT}) as well as $1_\Gamma\in L^2_{\beta\gamma}(\Gamma)$ hold in both cases.
\end{example}

In the special case where $\gamma_k(z) = \vartheta(z)$ is independent of $k$, and $\vartheta:[0,\infty)\to(0,\infty)$ is a bounded, twice continuously differentiable function satisfying
	\begin{equation*}
z|\vartheta''(z)| + |\vartheta'(z)| \le C\vartheta(z) \quad \forall~ z \ge 0, 
	\end{equation*}
	it has been shown in \cite{CF19, CS24} that the operator $\mathcal{L}$ generates a strongly continuous semigroup on $L^2_{\beta\gamma}(\Gamma)$.
Next, we extend this result to the more general setting of edge-dependent weight functions $\gamma$, and show that the corresponding operator generates not only a strongly continuous semigroup but also an analytic semigroup on $L^2_{\beta\gamma}(\Gamma)$. This property will be crucial for the numerical analysis of finite element methods.

\begin{theorem}\label{lem:operatorL}
Under Assumption \ref{asp:aT}, for any $\lambda\ge\frac18\kappa$, the operator $-\lambda I+\mathcal{L}$ generates an analytic contraction semigroup on $L^2_{\beta\gamma}(\Gamma)$, where the domain $D(\mathcal{L})$ of $\mathcal{L}$ is given by 
\begin{equation}\label{eq:DL}
D(\mathcal{L})=\bigl\{
f \in W^{1,2,c}_{\alpha\gamma,\beta\gamma}(\Gamma)
:\ f \text{ satisfies } \eqref{eq:glue},\ 
\alpha_k \tfrac{\ud f}{\ud z} \text{ is differentiable on each } I_k, \text{ and }\ 
\mathcal{L}f \in L^2_{\beta\gamma}(\Gamma)
\bigr\}. 
\end{equation}
\end{theorem}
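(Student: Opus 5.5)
We will prove the statement by the form method (Kato/Lions). On $H:=L^2_{\beta\gamma}(\Gamma)$ we introduce the sesquilinear form
\[
\mathfrak a(f,g):=\frac12\sum_{k=1}^m\int_{I_k}\alpha_k f'\,\overline{(g\gamma_k)'}\,\ud z
=\frac12\sum_{k}\int_{I_k}\alpha_k\gamma_k f'\overline{g'}\,\ud z+\frac12\sum_k\int_{I_k}\alpha_k\gamma_k' f'\overline{g}\,\ud z,
\]
defined on $V:=W^{1,2,c}_{\alpha\gamma,\beta\gamma}(\Gamma)$. Formally, for $f\in D(\mathcal L)$ and $g\in V$, integration by parts on each edge gives $\langle -\mathcal Lf,g\rangle_{L^2_{\beta\gamma}}=\mathfrak a(f,g)$. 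The boundary terms vanish for the following reasons. At interior vertices, continuity of $g\gamma$ combined with the Kirchhoff condition \eqref{eq:glue} kills them. At exterior vertices, $\alpha_k\to0$ linearly by \eqref{eq:A}, and we must show $\alpha_k f' \bar g\gamma\to0$ there (and at $O_\infty$). This uses $\alpha_kf'$ having a limit, because $(\alpha_kf')'=2\beta_k\mathcal Lf$ is integrable near the vertex by Cauchy--Schwarz, together with $f\in V$. Along the way we check that $V$ is a Hilbert space, densely and continuously embedded in $H$; density follows from the compactly supported smooth functions continuous at the vertices.

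The key estimate is the shifted coercivity and continuity of $\mathfrak a_\lambda:=\mathfrak a+\lambda\langle\cdot,\cdot\rangle_H$. We will bound the cross term by Cauchy--Schwarz and Assumption \ref{asp:aT}:
\[
\Bigl|\tfrac12\int\alpha_k\gamma_k' f'\bar g\Bigr|\le \tfrac12\Bigl(\int\alpha_k\gamma_k|f'|^2\Bigr)^{1/2}\Bigl(\int\tfrac{\alpha_k|\gamma_k'|^2}{\gamma_k}|g|^2\Bigr)^{1/2}\le\tfrac12\sqrt{\kappa}\,\|f'\|_{L^2_{\alpha\gamma}}\|g\|_{H}.
\]
For $g=f$, Young's inequality with weight gives $\operatorname{Re}\mathfrak a(f,f)\ge\frac12\|f'\|^2_{L^2_{\alpha\gamma}}-\frac12\sqrt\kappa\|f'\|\|f\|_H\ge\frac14\|f'\|^2-\frac{\kappa}{4}\|f\|^2_H$. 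Tuning the Young splitting to $\frac12 ab\le\frac12(a^2+\frac{\kappa}{4}\cdot\frac{b^2}{\kappa}\cdots)$, i.e.\ $\frac12\sqrt\kappa ab\le \frac12 a^2+\frac18\kappa b^2$, yields the sharp threshold: $\operatorname{Re}\mathfrak a(f,f)+\lambda\|f\|_H^2\ge0$ for $\lambda\ge\kappa/8$. Since accretivity alone does not give coercivity in $V$, we instead use a slightly smaller splitting parameter to obtain $\operatorname{Re}\mathfrak a_{\lambda'}(f,f)\ge c\|f\|_V^2$ for some larger $\lambda'$. Continuity $|\mathfrak a(f,g)|\le C\|f\|_V\|g\|_V$ follows from the same bound. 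Consequently $\mathfrak a_\lambda$ is a closed, densely defined, sectorial form: its numerical range lies in a sector $\{|\operatorname{Im}z|\le M(\operatorname{Re}z+\mu)\}$, and its real part is nonnegative for $\lambda\ge\kappa/8$.

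By Kato's representation theorem, the associated operator $A_\lambda$ is m-sectorial, so $-A_\lambda$ generates an analytic semigroup. Because $\operatorname{Re}\mathfrak a_\lambda\ge0$, this semigroup is contractive on $H$ (Lumer--Phillips), analytic on a sector. It then remains to identify $A_\lambda=\lambda I-\mathcal L$ with the domain \eqref{eq:DL}. The inclusion $\lambda-\mathcal L\subset A_\lambda$ is the integration by parts above. Conversely, if $f\in D(A_\lambda)$, testing $\mathfrak a(f,g)=\langle F,g\rangle_H$ with $g\gamma=\varphi\in C_c^\infty(\mathring I_k)$ shows in the distributional sense that $(\alpha_kf')'=-2\beta_kF$ on each edge. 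Hence $\alpha_kf'$ is absolutely continuous and $\mathcal Lf=-F\in H$. Testing next with $g$ supported near an interior vertex recovers \eqref{eq:glue}, and continuity at interior vertices comes from $f\in V$.

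I expect the main obstacle to be the boundary analysis at the degenerate vertices, together with the domain identification there. At exterior vertices and at $O_\infty$, $\alpha_k$ degenerates or grows and $\beta_k$ is only logarithmically controlled, so we must verify carefully that no boundary condition is imposed there. Concretely, we need $\lim\alpha_kf'\bar g\gamma=0$ for all $f\in D(\mathcal L)$ and $g\in V$. My first attempt will be to show that $\alpha_kf'$ has a finite limit $\ell$, and that $\ell\ne0$ would force $\int\alpha_k|f'|^2\sim\int\ell^2/\alpha_k=\infty$ near an extremum vertex, contradicting $f\in V$; hence $\ell=0$. For $O_\infty$, I will use a cutoff and the integrability of $|f'|^2\alpha\gamma$ and $|g|^2\beta\gamma$ to extract a sequence $z_n\to\infty$ along which the boundary term tends to zero.
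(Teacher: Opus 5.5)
Your proposal is correct and follows essentially the paper's route. It uses the same form $a_\lambda$ on $W^{1,2,c}_{\alpha\gamma,\beta\gamma}(\Gamma)$, the same Young-inequality bound giving accretivity exactly for $\lambda\ge\frac18\kappa$ (with coercivity after adding $\|\cdot\|^2_{L^2_{\beta\gamma}(\Gamma)}$), generation via the Kato/Ouhabaz form theorems, and identification of the associated operator by integration by parts and the Kirchhoff condition. The one difference is at $O_\infty$: the paper tests only against compactly supported functions and then uses density of $W^{1,2}_{c,\mathrm{cont}}(\Gamma)$ together with continuity of the form. Your extraction of a sequence $z_n\to\infty$ along which $\alpha_m f'\bar g\gamma_m\to0$ works equally well.

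At exterior vertices, $\ell=0$ alone does not yet give $\alpha_kf'\bar g\gamma_k\to0$, because $g\in W^{1,2,c}_{\alpha\gamma,\beta\gamma}(\Gamma)$ can be unbounded there (e.g.\ growing like $\log|\log|z-z_0||$). The Cauchy--Schwarz bounds $|\alpha_kf'(z)|\le C|z-z_0|^{1/2}$ and $|g(z)|\le C(1+|\log|z-z_0||^{1/2})$ close this step. The paper itself passes over this point, simply noting that $\alpha_k$ vanishes at exterior vertices.
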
\begin{proof}
Recall that $W^{1,2,c}_{\alpha\gamma,\beta\gamma}(\Gamma)$ denotes the subspace of $W^{1,2}_{\alpha\gamma,\beta\gamma}(\Gamma)$ consisting of functions that are continuous at each interior vertex on the graph $\Gamma$.
First, we claim that $W^{1,2}_{\mathrm{cont}}(\Gamma):=W^{1,2,c}_{\alpha\gamma,\beta\gamma}(\Gamma)$ is a closed subset of $W^{1,2}_{\alpha\gamma,\beta\gamma}(\Gamma)$. To illustrate this,
suppose a sequence $\{f_n\}_{n=1}^\infty \subset W^{1,2}_{\mathrm{cont}}(\Gamma)$ 
converging to some $f$ in $W^{1,2}_{\alpha\gamma,\beta\gamma}(\Gamma)$,
and we will show that $f$ is continuous at each interior vertex. 
Let $O_i = (H (\mathbf{x}_i), k_1) = (H (\mathbf{x}_i), k_2) = (H (\mathbf{x}_i), k_3)$ be an interior vertex. 
Divide each incident edge $I_{k_j}$, $j=1,2,3$, into two equal subintervals, 
and denote by $\tilde{I}_{k_j}$ the subinterval of $I_{k_j}$ incident to $O_i$.
Since $f_n \to f$ in $W^{1,2}_{\alpha\gamma,\beta\gamma}(\Gamma)$, 
we have $f_n\mid_{\tilde{I}_{k_j}} \to f\mid_{\tilde{I}_{k_j}}$ in $W^{1,2}_{\alpha_{k_j}\gamma_{k_j},\beta_{k_j}\gamma_{k_j}}(\tilde{I}_{k_j})$ for each $j=1,2,3$.
By \eqref{eq:A} and \eqref{eq:T}, it can be verified that $W_{\alpha_{k_j}\gamma_{k_j},\beta_{k_j}\gamma_{k_j}}^{1,2}(\tilde{I}_{k_j})\subset W^{1,2}(\tilde{I}_{k_j})$. In combination with
the Sobolev embedding $W^{1,2}(\tilde{I}_{k_j})\hookrightarrow\mathcal{C}(\tilde{I}_{k_j})$,  this implies that $f_n\mid_{\tilde{I}_{k_j}} \to f\mid_{\tilde{I}_{k_j}}$ in $\mathcal{C}(\tilde{I}_{k_j})$ for each $j=1,2,3$. 
 Recalling that $f_n$'s are continuous at the interior vertex $O_i$, it follows that for $j=2,3$,
$$
f(H (\mathbf{x}_i),k_j)=\lim_{n\to\infty}f_n(H (\mathbf{x}_i),k_j)=\lim_{n\to\infty}f_n(H (\mathbf{x}_i),k_1)=f(H (\mathbf{x}_i),k_1).
$$
This proves the continuity of $f$ at the interior vertex $O_i$, and thus 
$W^{1,2}_{\mathrm{cont}}(\Gamma)$ is a Hilbert space endowed with the inner product inherited from $W^{1,2}_{\alpha\gamma,\beta\gamma}(\Gamma)$. 

For $\lambda\in\R$, consider
 the bilinear form $a_{\lambda}: W^{1,2}_{\mathrm{cont}}(\Gamma)\times W^{1,2}_{\mathrm{cont}}(\Gamma)\to \C$ defined by 
\begin{equation}\label{eq:ap}
a_{\lambda}(\phi,\psi)=\lambda \sum_{k=1}^m\int_{I_k}\phi(z,k)\overline{\psi(z,k)}\beta_k(z)\gamma_k(z)\ud z+\frac12\sum_{k=1}^m\int_{I_k}\alpha_{k}(z)\frac{\ud }{\ud z}\phi(z,k)\frac{\ud }{\ud z}\left(\overline{\psi(z,k)}\gamma_k(z)\right)\ud z.
\end{equation}
\emph{Claim 1. For $\lambda\ge\frac18 \kappa$, $a_\lambda$ is densely defined, continuous, closed and accretive.}\\
\emph{Proof of Claim 1.}
(i) $a_\lambda$ is densely defined since $ \oplus_{k=1}^m\mathcal{C}_c^\infty(I_k)\subset W^{1,2}_{\mathrm{cont}}(\Gamma)$ is dense in $L_{\beta\gamma}^2(\Gamma)$. Here, $\mathcal{C}_c^\infty(I_k)$ denotes the space of all infinitely differentiable functions with compact support in $I_k$.

(ii) By Assumption \ref{asp:aT} and  Young's inequality, for any $\epsilon>0$ and $\phi,\psi\in W^{1,2}_{\mathrm{cont}}(\Gamma)$, we have 
\begin{align}\label{eq:lowphipsi}\notag
   & \bigg|\sum_{k=1}^m\int_{I_k}\alpha_{k}\frac{\ud }{\ud z}\phi\overline{\psi}\gamma_k^\prime\ud z\bigg|\le \frac{1}{2}\epsilon\sum_{k=1}^m\int_{I_k}\alpha_{k}|\frac{\ud }{\ud z}\phi|^2\gamma_k\ud z+\frac{1}{2\epsilon}\sum_{k=1}^m\int_{I_k}\alpha_{k}|\overline{\psi}\gamma_k^\prime|^2\gamma_k^{-1}\ud z\\
  &\le \frac{1}{2}\epsilon\sum_{k=1}^m\int_{I_k}\alpha_{k}|\frac{\ud }{\ud z}\phi|^2\gamma_k\ud z+\frac{\kappa}{2\epsilon}\sum_{k=1}^m\int_{I_k}|\psi|^2\beta_k\gamma_k\ud z.
\end{align}
 In view of \eqref{eq:lowphipsi} with $\psi=\phi$, one has that for any $\phi\in W^{1,2}_{\mathrm{cont}}(\Gamma)$,
\begin{align}\label{eq:lower}
\Re a_\lambda(\phi,\phi)
&\ge (\frac12-\frac{1}{4}\epsilon)\sum_{k=1}^m\int_{I_k}\alpha_{k}|\frac{\ud}{\ud z}\phi|^2\gamma_k\ud z+(\lambda-\frac{\kappa}{4\epsilon} )\sum_{k=1}^m\int_{I_k}|\phi|^2\beta_k\gamma_k\ud z.
\end{align}
For $\lambda\ge\frac{1}{8}\kappa$, we take $\epsilon\in[\frac{\kappa}{4\lambda},2]$ such that
 $\Re a_\lambda(\phi,\phi)\ge0$ for all $\phi\in W^{1,2}_{\mathrm{cont}}(\Gamma)$, i.e., $a_\lambda$  is accretive.

(iii) By the H\"older inequality and \eqref{eq:lowphipsi}, one can show that
\begin{equation}\label{eq:aconti}
  |a_{\lambda}(\phi,\psi)|\le C(\lambda,\kappa)\|\phi\|_{W^{1,2}_{\alpha\gamma,\beta\gamma}(\Gamma)}\|\psi\|_{W^{1,2}_{\alpha\gamma,\beta\gamma}(\Gamma)},\quad \phi,\psi\in W^{1,2}_{\mathrm{cont}}(\Gamma).  
\end{equation}
Taking $\epsilon=\frac{\kappa}{4\lambda+2}$ in \eqref{eq:lower} yields  $\|\phi\|_{a_\lambda}^2:=\Re a_\lambda(\phi,\phi)+\|\phi\|^2_{L_{\beta\gamma}^2(\Gamma)}\ge \frac{1}{4\lambda+2}\|\phi\|_{W^{1,2}_{\alpha\gamma,\beta\gamma}(\Gamma)}^2$ for all $\phi\in  W^{1,2}_{\mathrm{cont}}(\Gamma)$. This proves the continuity of $a_{\lambda}$, i.e., 
\begin{equation}\label{eq:cont}
|a_{\lambda}(\phi,\psi)|\le  \mathcal{M}\|\phi\|_{a_\lambda}\|\psi\|_{a_\lambda},\quad \phi,\psi\in W^{1,2}_{\mathrm{cont}}(\Gamma).  
\end{equation}
where  $\mathcal M$ is a positive constant depending on $\kappa$ and $\lambda$.

(iv) $a_\lambda$ is closed since for any $\lambda\ge\frac{1}{8}\kappa$, $\|\cdot\|_{a_\lambda}$
is an equivalent norm of the Hilbert space $W^{1,2}_{\mathrm{cont}}(\Gamma)$.

The operator associated with the form $a_\lambda$ is defined by (see \cite[Definition 1.21]{OE05})
\begin{align}\label{eq:DA}D(A)&:=\{ \phi\in W^{1,2}_{\mathrm{cont}}(\Gamma):\exists f\in L^2_{\beta\gamma}(\Gamma) ~s.t.~ a_\lambda(\phi, \psi)=\langle f, \psi\rangle_{L^2_{\beta\gamma}(\Gamma)}\quad \forall~\psi\in W^{1,2}_{\mathrm{cont}}(\Gamma)\},\\
A\phi&:=-f.
\end{align}

\noindent\emph{Claim 2. The operator $A$ associated with the form $a_\lambda$ is $-\lambda I + \mathcal{L}$, where $D(\mathcal{L})$ is defined in \eqref{eq:DL}.}
\emph{Proof of Claim 2}. (I) 
Denote by $W^{1,2}_{c,\mathrm{cont}}(\Gamma)$ the space of functions in $W^{1,2}_{\mathrm{cont}}(\Gamma)$ with compact support, namely, for any $\psi\in W^{1,2}_{c,\mathrm{cont}}(\Gamma)$, there exists a constant $z^{\psi}\ge H _0$ such that $\psi(z,m)=0$ for all $z\ge z^\psi$. 
If $\phi\in D(\mathcal{L})$, then integrating by parts yields that for any $\psi\in W^{1,2}_{c,\mathrm{cont}}(\Gamma)$, 
\begin{align*}
a_\lambda(\phi,\psi)
&=\lambda\sum_{k=1}^m\int_{I_k}\phi(z,k)\overline{\psi(z,k)}\beta_k(z)\gamma_k(z)\ud z-\sum_{k=1}^m\int_{I_k}\mathcal{L}\phi(z,k)\overline{\psi(z,k)}\beta_k(z)\gamma_k(z)\ud z\\
&\quad+\frac12\sum_{k=1}^{m-1}\sum_{O_i\sim I_k}\alpha_{k}(z)\frac{\ud }{\ud z}\phi(z,k)\overline{\psi(z,k)}\gamma_k(z)\Phi_{k,i}+\frac12\alpha_{m}(z)\frac{\ud }{\ud z}\phi(z,m)\overline{\psi(z,m)}\gamma_m(z)\Big|^{z=z^\psi}_{z=H_0}\\
&=\left\langle (\lambda I-\mathcal{L})\phi,\psi\right\rangle_{L^2_{\beta\gamma}(\Gamma)}\\
&\quad+\frac12\sum_{O_i\in\mathcal{O}_{\textup{int}}}\sum_{k:I_k\sim O_i}\alpha_{k}(H (\mathbf{x}_i))\ud_k\phi(H (\mathbf{x}_i),k)\overline{\psi(H (\mathbf{x}_i),k)}\gamma_k(H (\mathbf{x}_i)),
\end{align*}
where  we have used the fact that $\alpha_k$ vanishes at exterior vertices and $\psi(z^\psi,m)=0$. Throughout, $\mathcal{O}_{\textup{int}}$ denotes the set of interior vertices of $\Gamma$.
Furthermore,
 the continuity of $\overline{\psi}\cdot\gamma$ 
 at interior vertices and the Kirchhoff condition \eqref{eq:glue} lead to 
 \begin{align}\label{eq:aphipsi}
a_\lambda(\phi,\psi)=\langle (\lambda I-\mathcal{L})\phi,\psi\rangle_{L^2_{\beta\gamma}(\Gamma)}\quad\forall~\psi\in W^{1,2}_{c,\mathrm{cont}}(\Gamma).
 \end{align}

We next show that $W^{1,2}_{c,\mathrm{cont}}(\Gamma)$ is dense in $W^{1,2}_{\mathrm{cont}}(\Gamma)$. 
For each $R>0$, set $\varrho(\cdot)=\varrho(\frac{\cdot}{R})$, where $\varrho:[0,\infty)\to [0,1]$ is a smooth function satisfying 
\begin{equation}\label{eq:varphi}
\varrho(z)=1, \quad\text{if}~ 0\le z\le1;\qquad
\varrho(z)=0,\quad\text{if}~ z\ge2.
\end{equation}
Given $\psi\in W^{1,2}_{\mathrm{cont}}(\Gamma)$, we have $\psi\cdot\varrho_R\in W^{1,2}_{c,\mathrm{cont}}(\Gamma)$ and that for any $R> H _0$,
\begin{align*}
\|\psi-\psi\cdot\varrho_R\|^2_{W^{1,2}_{\alpha\gamma,\beta\gamma}(\Gamma)}&=\int_{I_m}|\frac{\ud}{\ud z}(\psi-\psi\varrho_R)|^2\alpha_{m}\gamma_m\ud z+\int_{I_m}|\psi-\psi\varrho_R|^2\beta_m\gamma_m\ud z\\
&\le \int_{R}^\infty|\frac{\ud}{\ud z}\psi|^2\alpha_{m}\gamma_m\ud z+\frac{C}{R^2}\int_{R}^{2R}|\psi|^2\alpha_{m}\gamma_m\ud z+\int_{R}^\infty|\psi|^2\beta_{m}\gamma_m\ud z.
\end{align*}
Since  $\textup{supp}(\varrho_R^\prime)\subset[R,2R]$, $\alpha_m(z)\sim  z$ and $\beta_m(z)\sim C>0$ as $z\to \infty$, for sufficiently large $R$, we have
$$\frac{C}{R^2}\int_{R}^{2R}|\psi|^2\alpha_{m}\gamma_m\ud z
\le C\frac{1}{R}\int_{R}^{2R}|\psi|^2\beta_{m}\gamma_m\ud z\le 
 C\frac{1}{R}\int_{R}^{\infty}|\psi|^2\beta_{m}\gamma_m\ud z.$$
Hence $\psi\cdot\varrho_R$ converges to $\psi$  in $W^{1,2}_{\mathrm{cont}}(\Gamma)$,
which means that $W^{1,2}_{c,\mathrm{cont}}(\Gamma)$ is dense in $W^{1,2}_{\mathrm{cont}}(\Gamma)$.

For any $\psi\in  W^{1,2}_{\mathrm{cont}}(\Gamma)$, there is a sequence $\{\psi_n\}_{n=1}^\infty\subset W^{1,2}_{c,\mathrm{cont}}(\Gamma)$ converging in $W^{1,2}_{\mathrm{cont}}(\Gamma)$ to $\psi$, which, along with the continuity of $a_\lambda$ and \eqref{eq:aphipsi}, indicates 
$$a_\lambda(\phi,\psi)=\lim_{n\to\infty}a_\lambda(\phi,\psi_n)=\lim_{n\to\infty}\langle (\lambda I-\mathcal{L})\phi,\psi_n\rangle_{L^2_{\beta\gamma}(\Gamma)}=\langle (\lambda I-\mathcal{L})\phi,\psi\rangle_{L^2_{\beta\gamma}(\Gamma)}.$$
Thus we have proved that $\phi\in D(A)$ and $A \phi=(-\lambda I+\mathcal{L})\phi$.

(II) On the other hand, if $\phi\in D(A)$, then by \eqref{eq:DA} and the definition of weak derivative, we obtain that for each $k=1,\ldots,m$,
\begin{align*}
\frac12\frac{\ud}{\ud z}\left(\alpha_{k}(z)\frac{\ud }{\ud z}\phi(z,k)\right)=(\lambda\phi(z,k)-f(z,k))\beta_k(z)\quad \text{in weak sense},
\end{align*}
i.e., $(\lambda I-\mathcal{L})\phi=f\in L^2_{\beta\gamma}(\Gamma)$.
 Then integrating by parts implies that for any $\psi\in W^{1,2}_{c,\mathrm{cont}}(\Gamma)$,
\begin{align*}
a_\lambda(\phi,\psi)&=\langle f,\psi\rangle_{L^2_{\beta\gamma}(\Gamma)}+\frac12\sum_{O_i\in\mathcal{O}_{\textup{int}}}\Big(\sum_{k:I_k\sim O_i}\alpha_{k}(H (\mathbf{x}_i))\ud_k\phi(H (\mathbf{x}_i),k))\Big)\overline{\psi(H (\mathbf{x}_i),k)}\gamma_k(H (\mathbf{x}_i)),
\end{align*}
due to $(\lambda I-\mathcal{L})\phi=f$, the fact that $\alpha_k$ vanishes at exterior vertices, $\psi(z^\psi,m)=0$ and the continuity of $\gamma$ at interior vertices.
The arbitrariness of $\psi$ in $W^{1,2}_{c,\mathrm{cont}}(\Gamma)$ implies that $\phi$ satisfies the Kirchhoff condition \eqref{eq:glue}, and hence $\phi \in D(\mathcal{L})$.

Combining Claim 1 and Claim 2,
according to \cite[Theorem 1.51 \& Theorem 1.52]{OE05}, 
 for any $\lambda\ge\frac18\kappa$, the linear operator $\mathcal{L}-\lambda I$  generates a strongly continuous contraction semigroup on $L^2_{\beta\gamma}(\Gamma)$ which is analytic on the sectorial $\{y\in\C,y\neq0, |\arg y|<\frac{\pi}{2}-\arctan \mathcal M\}$, where $\mathcal M$ is the constant in the continuity property \eqref{eq:cont} of $a_\lambda$.
 \end{proof}
\subsection{Well-posedness and regularity of the solution}
Let $B$ and $G$ be the Nemytskii operators associated with $b$ and $g$, respectively, i.e., for $f\in L^2_{\beta\gamma}(\Gamma)$ and $\psi\in \mathcal{U}_0$, 
\begin{align}\label{eq:BG}
	B(f)(z,k):=b(f(z,k)),\quad G(f)(\psi)(z,k):=g(f(z,k))\psi(z,k),\quad (z,k)\in\Gamma.
\end{align}
By \eqref{eq:e_i} and the Lipschitz continuity of $b$ and $g$, the mappings $B:L^2_{\beta\gamma}(\Gamma)\to L^2_{\beta\gamma}(\Gamma)$ and $G:L^2_{\beta\gamma}(\Gamma)\to \mathscr{L}_2(\mathcal{U}_0, L^2_{\beta\gamma}(\Gamma))$ are Lipschitz continuous, i.e., for any $f_1,f_2\in L^2_{\beta\gamma}(\Gamma)$,
\begin{gather}\label{eq:BLip}
     \|B(f_1)-B(f_2)\|_{L^2_{\beta\gamma}(\Gamma)}+ \|G(f_1)-G(f_2)\|_{\mathscr{L}_2(\mathcal{U}_0,L^2_{\beta\gamma}(\Gamma))}\le  C\|f_1-f_2\|_{L^2_{\beta\gamma}(\Gamma)}.
\end{gather}
Notice that $L^2_{\beta\gamma}(\Gamma)=L^2(\Gamma;\nu_\gamma)$, where $\nu_\gamma$ is a measure on the measurable space $(\Gamma,\mathscr{B}(\Gamma))$ defined by $\nu_{\gamma}(E):=\sum_{k=1}^m \int_{I_k\cap E} \gamma_k(z) \beta_k(z) \ud z$ for $E\in\mathscr{B}(\Gamma)$.
Denote by $1_\Gamma:\Gamma\to \R$ the constant mapping $(z,k)\mapsto 1_\Gamma(z,k)=1$. When $1_{\Gamma}\in L^2_{\beta\gamma}(\Gamma)$,  $\nu_{\gamma}$ is a finite measure on $(\Gamma,\mathscr{B}(\Gamma))$.
 By \eqref{eq:BLip}, if 
	$1_{\Gamma}\in L^2_{\beta\gamma}(\Gamma)$, 
	the mappings $B$ and $G$ are of linear growth, i.e., for any $ f_1\in L^2_{\beta\gamma}(\Gamma)$,
    \begin{align*}
         \|B(f_1)\|_{L^2_{\beta\gamma}(\Gamma)}+ \|G(f_1)\|_{\mathscr{L}_2(\mathcal{U}_0,L^2_{\beta\gamma}(\Gamma))}\le  C(1+\|f_1\|_{L^2_{\beta\gamma}(\Gamma)}).
    \end{align*}
 
Due to Theorem \ref{lem:operatorL}, $\mathcal L$ generates a strongly continuous analytic semigroup 
	$\{e^{t \mathcal{L}},t>0\}$ on $L^2_{\beta\gamma}(\Gamma)$, and 
	$\|e^{t \mathcal{L}}\|_{\mathscr{L}(L^2_{\beta\gamma}(\Gamma))}\le e^{\frac{1}{8}\kappa t}$ for all $t>0.$
Invoking the Lipschitz continuity and linear growth of both $B$ and $G$, we can deduce the following well-posedness result by applying \cite[Theorem 7.5]{DZ14}.
\begin{lemma}\label{lem:L2}
Let Assumption \ref{asp:aT} hold and $\{1_{\Gamma},u_0\}\subset L^2_{\beta\gamma}(\Gamma)$. Then there exists a unique mild solution $u=\{u(t)\}_{t\in[0,T]}$ to \eqref{eq:SPDE}, i.e., for any $t\in[0,T]$,
$$u(t)=e^{t\mathcal{L}} u_0+\int_0^t e^{(t-s)\mathcal{L}}B(u(s))\ud s
+\int_0^t e^{(t-s)\mathcal{L}} G(u(s))\ud W(s).$$
	 Moreover, for any $p\ge 1$, there exists $C:=C(p,\kappa,T)>0$ such that
\begin{align}\label{eq:L2}
	\E\bigg[\sup_{t\in[0,T]}\|u(t)\|^p_{L^2_{\beta\gamma}(\Gamma)}\bigg]\le C\Big(1+\|u_0\|^p_{L^2_{\beta\gamma}(\Gamma)}\Big).
\end{align}
\end{lemma}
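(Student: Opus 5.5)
The plan is to apply the abstract existence theorem for mild solutions, \cite[Theorem 7.5]{DZ14}, in the Hilbert space $H:=L^2_{\beta\gamma}(\Gamma)$ with noise space $\mathcal U_0$, and to check its hypotheses one by one. The hypotheses are: a $C_0$-semigroup with exponential bound, Lipschitz continuity and linear growth of the drift and of the diffusion (in Hilbert--Schmidt norm), and an $\mathscr F_0$-measurable initial datum with finite $p$-th moment.

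\textbf{Semigroup.} By Theorem \ref{lem:operatorL} with $\lambda=\frac18\kappa$, the operator $\mathcal L-\lambda I$ generates a contraction semigroup. Hence $e^{t\mathcal L}=e^{\lambda t}e^{t(\mathcal L-\lambda I)}$ satisfies $\|e^{t\mathcal L}\|_{\mathscr L(H)}\le e^{\kappa t/8}$, which is a $C_0$-semigroup with growth bound $\omega=\kappa/8$.

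\textbf{Nonlinear coefficients.} For $B$, we use that $b$ is Lipschitz and integrate against $\beta_k\gamma_k$, which gives Lipschitz continuity in $H$. Since $1_\Gamma\in H$, the measure $\nu_\gamma$ is finite, so $\|B(0)\|_H=|b(0)|\,\|1_\Gamma\|_H<\infty$, and linear growth follows.

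For $G$, take the orthonormal basis $e_j=(\widehat{\mathfrak u_j\mu})^\wedge$ of $\mathcal U_0$. Then
\[
\|G(f_1)-G(f_2)\|^2_{\mathscr L_2(\mathcal U_0,H)}=\sum_{k}\int_{I_k}|g(f_1)-g(f_2)|^2\sum_j|e_j|^2\beta_k\gamma_k\,\ud z\le \mu(\R^2)\,L_g^2\,\|f_1-f_2\|_H^2
\]
by \eqref{eq:e_i}. Applying the same bound with $f_2=0$, together with $\|g(0)1_\Gamma\|_H<\infty$, gives linear growth of $G$. This recovers \eqref{eq:BLip} and the linear growth estimate stated just before the lemma.

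\textbf{Existence, uniqueness and moments.} For fixed $p\ge2$, existence and uniqueness follow from the standard Picard iteration in the Banach space of predictable processes with norm $\sup_{t\le T}(\E\|u(t)\|_H^p)^{1/p}$. The deterministic convolution is controlled by $e^{\kappa T/8}$ and the stochastic convolution by the Burkholder inequality. The step that needs care is the estimate with the supremum inside the expectation, because the semigroup is not a contraction and the stochastic convolution is not a martingale.

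To handle it, I would rewrite $e^{t\mathcal L}=e^{\lambda t}S(t)$, where $S$ is a contraction semigroup, and use the factorization method. Alternatively, since $S$ is a contraction semigroup, the maximal inequality for stochastic convolutions (Kotelenez/Tubaro, as in \cite[Theorem 7.5]{DZ14}) gives
\[
\E\sup_{s\le t}\Big\|\int_0^s e^{(s-r)\mathcal L}G(u(r))\,\ud W(r)\Big\|_H^p\le C\,\E\Big(\int_0^t\|G(u(r))\|^2_{\mathscr L_2}\,\ud r\Big)^{p/2}.
\]
Combining this with linear growth, H\"older's inequality and Gronwall's lemma applied to $\phi(t)=\E\sup_{s\le t}\|u(s)\|_H^p$ yields \eqref{eq:L2} with $C=C(p,\kappa,T)$. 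The case $p\in[1,2)$ then follows from $p=2$ by Jensen's inequality.
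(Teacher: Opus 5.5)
Your proposal is correct and follows the same route as the paper. You take the bound $\|e^{t\mathcal{L}}\|_{\mathscr{L}(L^2_{\beta\gamma}(\Gamma))}\le e^{\frac18\kappa t}$ from Theorem~\ref{lem:operatorL}, obtain Lipschitz continuity and linear growth of $B$ and $G$ from \eqref{eq:e_i} and $1_\Gamma\in L^2_{\beta\gamma}(\Gamma)$, and then apply \cite[Theorem 7.5]{DZ14}; your remarks on the maximal inequality for the supremum estimate and on reducing $p\in[1,2)$ to $p=2$ by Jensen's inequality only fill in details that the paper leaves to the cited theorem.
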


\begin{lemma}\label{lem:H1}
Under the assumptions of Lemma \ref{lem:L2}, there exists $C:=C(T,\kappa,p)>0$ such that
\begin{align}\label{eq:utgamma}
\E\int_0^T\sum_{k=1}^m\int_{I_k}|\frac{\partial}{\partial z}u(t,z,k)|^2\alpha_k(z)\gamma_k(z)\ud z \ud t\le C\Big(1+\|u_0\|_{L^2_{\beta\gamma}(\Gamma)}^2\Big).
\end{align}
\end{lemma}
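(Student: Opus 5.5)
The plan is to apply Itô's formula to $\|u(t)\|^2_{L^2_{\beta\gamma}(\Gamma)}$ and use the coercivity of the form $a_\lambda$ from the proof of Theorem \ref{lem:operatorL}. Since $u$ is only a mild solution, the formula is justified through a regularization. One option is the Yosida approximation $\mathcal{L}_n:=n\mathcal{L}(nI-\mathcal{L})^{-1}$ with $u_n$ the corresponding strong solution. Another is to apply the formula to $nR(n,\mathcal{L})u(t)$, which lies in $D(\mathcal{L})$ and satisfies an Itô equation. Either way, for the regularized process we have
\begin{align*}
\ud\|u_n\|^2_{L^2_{\beta\gamma}(\Gamma)}&=2\Re\langle \mathcal{L}u_n,u_n\rangle_{L^2_{\beta\gamma}(\Gamma)}\ud t+2\Re\langle B(u_n),u_n\rangle_{L^2_{\beta\gamma}(\Gamma)}\ud t\\
&\quad+\|G(u_n)\|^2_{\mathscr{L}_2(\mathcal{U}_0,L^2_{\beta\gamma}(\Gamma))}\ud t+2\Re\langle u_n,G(u_n)\ud W\rangle_{L^2_{\beta\gamma}(\Gamma)}.
\end{align*}

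The key identity is that for $\phi\in D(\mathcal{L})$, Claim 2 gives $-\Re\langle\mathcal{L}\phi,\phi\rangle_{L^2_{\beta\gamma}(\Gamma)}=\Re a_0(\phi,\phi)$. Estimate \eqref{eq:lower}, taken with $\lambda=0$ and $\epsilon=1$, then yields
$$-\Re\langle\mathcal{L}\phi,\phi\rangle_{L^2_{\beta\gamma}(\Gamma)}\ge \tfrac14\sum_{k=1}^m\int_{I_k}\alpha_k|\tfrac{\ud}{\ud z}\phi|^2\gamma_k\ud z-\tfrac{\kappa}{4}\|\phi\|^2_{L^2_{\beta\gamma}(\Gamma)}.$$

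Integrating in time and taking expectations removes the martingale term; the stochastic integral is a true martingale because of the moment bound \eqref{eq:L2}. The terms involving $B$ and $G$ are controlled by the linear growth bounds, which hold since $1_\Gamma\in L^2_{\beta\gamma}(\Gamma)$. This gives
$$\tfrac12\E\int_0^T\sum_k\int_{I_k}\alpha_k|\partial_z u_n|^2\gamma_k\ud z\ud t\le \|u_0\|^2_{L^2_{\beta\gamma}(\Gamma)}+C\,\E\int_0^T(1+\|u_n(t)\|^2_{L^2_{\beta\gamma}(\Gamma)})\ud t,$$
and Lemma \ref{lem:L2} bounds the right-hand side by $C(1+\|u_0\|^2_{L^2_{\beta\gamma}(\Gamma)})$. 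The constant is uniform in $n$, because the Yosida semigroups satisfy the same growth bound $e^{C\kappa t}$, at least for large $n$.

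The main obstacle is passing to the limit $n\to\infty$. I expect to show $u_n\to u$ in $L^2(\Omega\times[0,T];L^2_{\beta\gamma}(\Gamma))$ by the standard argument of \cite[Proposition 7.4]{DZ14}. The uniform bound then gives a weakly convergent subsequence in $L^2(\Omega\times[0,T];W^{1,2}_{\mathrm{cont}}(\Gamma))$, and this is a Hilbert space because $W^{1,2}_{\mathrm{cont}}(\Gamma)$ was shown to be closed in the proof of Theorem \ref{lem:operatorL}. The weak limit must coincide with $u$, and weak lower semicontinuity of the norm yields \eqref{eq:utgamma}. A second subtlety is that the coercivity identity requires $u_n(t)\in D(\mathcal{L})$, or at least $u_n(t)\in W^{1,2}_{\mathrm{cont}}(\Gamma)$ together with the form identity. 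If the Yosida route is used, $\langle\mathcal{L}_n\phi,\phi\rangle$ has to be compared with the form evaluated at $J_n\phi=n(nI-\mathcal{L})^{-1}\phi$. I would therefore use the alternative of applying Itô's formula to $J_nu$, which lies in $D(\mathcal{L})$, and then handle the commutator terms $J_nB(u)$ and $J_nG(u)$ using $\|J_n\|\le C$ and $J_n\to I$ strongly.
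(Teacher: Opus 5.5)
Your proposal is correct, and its core estimate is the same as the paper's: It\^o's formula for $\|u\|^2_{L^2_{\beta\gamma}(\Gamma)}$, then the lower bound \eqref{eq:lower} with $\lambda=0$ and $\epsilon=1$, then linear growth of $B$ and $G$, then \eqref{eq:L2}. Where you differ is in how It\^o's formula is justified.

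The paper extends $\mathcal{L}$ to a map $W^{1,2}_{\alpha\gamma,\beta\gamma}(\Gamma)\to (W^{1,2}_{\alpha\gamma,\beta\gamma}(\Gamma))^*$ via $-a_0$, and applies the Gelfand-triple It\^o formula \cite[Theorem 4.2.5]{LR15} directly to $u$. This is short. However, that theorem takes $u\in L^2(\Omega\times[0,T];W^{1,2}_{\alpha\gamma,\beta\gamma}(\Gamma))$ and the variational form of the equation as hypotheses, which is essentially the conclusion. So as written it tacitly relies on identifying the mild solution with a variational one.

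Your route through $J_nu$, with $J_n=n(nI-\mathcal{L})^{-1}$, uses only the mild formulation:
\begin{itemize}
\item $J_nu$ is a strong solution because $\mathcal{L}J_n=n(J_n-I)$ is bounded.
\item The identity $-\Re\langle\mathcal{L}\phi,\phi\rangle_{L^2_{\beta\gamma}(\Gamma)}=\Re a_0(\phi,\phi)$ on $D(\mathcal{L})$ is exactly Claim 2.
\item The limit follows from weak compactness in the Hilbert space $L^2(\Omega\times[0,T];W^{1,2}_{\mathrm{cont}}(\Gamma))$ together with weak lower semicontinuity.
\end{itemize}
It is longer, but it is self-contained.

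Two simplifications are available. First, in the $J_nu$ route you do not need \cite[Proposition 7.4]{DZ14}. The convergence $J_nu\to u$ in $L^2(\Omega\times[0,T];L^2_{\beta\gamma}(\Gamma))$ follows directly from three facts: $\|J_n\|\le n/(n-\kappa/8)\le 2$ for $n\ge \kappa/4$ (by Hille--Yosida for the contraction semigroup generated by $\mathcal{L}-\frac{\kappa}{8}I$), $J_n\to I$ strongly, and dominated convergence. Second, you can drop the Yosida-approximation discussion altogether. Just write the drift and noise terms of the It\^o formula as $J_nB(u)$ and $J_nG(u)$ from the start, and bound them by linear growth together with $\|J_n\|\le 2$.
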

\begin{proof}
For any $\phi \in W_{\alpha\gamma,\beta\gamma}^{1,2}(\Gamma)$, we define the operator $\mathcal{L}$ by $
(\mathcal{L}\phi)(\psi) := - a_0(\phi,\psi)$ for all $\psi \in W_{\alpha\gamma,\beta\gamma}^{1,2}(\Gamma),
$
where $a_0$ is the bilinear form defined in \eqref{eq:ap} with $\lambda = 0$. 
Then $\mathcal{L}\phi$ defines a continuous linear functional on 
$W_{\alpha\gamma,\beta\gamma}^{1,2}(\Gamma)$, that is,
$
\mathcal{L}\phi \in \bigl(W_{\alpha\gamma,\beta\gamma}^{1,2}(\Gamma)\bigr)^*.
$
Applying the It\^o formula (see \cite[Theorem 4.2.5]{LR15}) to $\|u(t)\|_{L^2_{\beta\gamma}}^2$, we obtain that for any $t\in(0,T]$,
\begin{align}\label{eq:Psi0R}\notag
\ud \|u(t)\|^2_{L^2_{\beta\gamma}(\Gamma)}&=-2a_0(u(t),u(t))\ud t+2\langle u(t),B(u(t))\rangle_{L^2_{\beta\gamma}(\Gamma)}\ud t\\
&\quad+2\langle u(t),G(u(t))\ud W(t)\rangle_{L^2_{\beta\gamma}(\Gamma)}+\|G(u(t))\|_{\mathscr{L}_2(\mathcal{U}_0,L^2_{\beta\gamma}(\Gamma))}^2\ud t.
\end{align}
The inequality \eqref{eq:lower} with $\epsilon=1$ reads
\begin{align}\label{eq:Lut}
-a_0(u(t),u(t))
\le -\frac14\sum_{k=1}^m\int_{I_k}\alpha_{k}|\frac{\partial }{\partial z}u(t)|^2\gamma_k\ud z+
\frac{\kappa}{4}\sum_{k=1}^m\int_{I_k}|u(t)|^2\beta_k\gamma_k\ud z.
\end{align}
The linear growth of $B:L^2_{\beta\gamma}(\Gamma)\to L^2_{\beta\gamma}(\Gamma)$ and $G:L^2_{\beta\gamma}(\Gamma)\to\mathscr{L}_2(\mathcal{U}_0,L^2_{\beta\gamma}(\Gamma))$ leads to 
\begin{align}\label{eq:BG1}
\langle u(t),B(u(t))\rangle_{L^2_{\beta\gamma}(\Gamma)}+\|G(u(t))\|_{\mathscr{L}_2(\mathcal{U}_0,L^2_{\beta\gamma}(\Gamma))}^2
\le C(1+\|u(t)\|^2_{L^2_{\beta\gamma}(\Gamma)}).
\end{align}
 Substituting the inequalities \eqref{eq:Lut} and \eqref{eq:BG1} into \eqref{eq:Psi0R}, and then integrating over $[0,T]\times\Omega$, we obtain \eqref{eq:utgamma} from \eqref{eq:L2}.
 \end{proof}

 \begin{remark}
The coefficients $b$ and $g$ in \eqref{eq:SPDE} originate from the reaction terms in the multiscale model \eqref{eq:SRDA}. 
Regarding the well-posedness of stochastic PDEs on graphs, it is natural to include more general nonlinearities that may depend on the edges; see, e.g., \cite{KS21}. 
Specifically, we can consider the following generalized form of \eqref{eq:SPDE}:
\begin{equation}\label{eq:SPDE general}
	\partial_t u(t,z,k)
	= \mathcal{L} u(t,z,k)
	+ b_k(u(t,z,k))
	+ g_k(u(t,z,k))\,\partial_t W(t,z,k),
\end{equation}
for $(t,z,k)\in (0,T]\times \Gamma$, where $b_k, g_k : \mathbb{R} \to \mathbb{R}$ are Lipschitz continuous for $k=1,2,\ldots,m$.  
Accordingly, the associated Nemytskii operators $B$ and $G$ are defined by
\begin{align*}
	B(f)(z,k) := b_k(f(z,k)), \qquad
	G(f)(\psi)(z,k) := g_k(f(z,k))\,\psi(z,k), \quad (z,k)\in \Gamma,
\end{align*}
where $f\in L^2_{\beta\gamma}(\Gamma)$ and $\psi\in \mathcal{U}_0$. Since the mappings $B$ and $G$ defined above remain Lipschitz continuous, Lemma~\ref{lem:L2} continues to hold for \eqref{eq:SPDE general}.
 \end{remark}

\section{Approximation strategy and convergence result}\label{S:main}
In this section, we describe our approach for the numerical approximation of the degenerate stochastic PDE \eqref{eq:SPDE} on the unbounded graph $\Gamma$. The non-compactness of the graph $\Gamma$ and the degeneracy of the operator $\mathcal{L}$
pose significant challenges for direct numerical treatment. To address these challenges, we adopt a three-step strategy: first, we truncate the unbounded graph 
$\Gamma$ to obtain a compact subgraph; next, we apply a regularization technique to manage the degeneracy; and finally, we perform a numerical discretization of the resulting regularized and truncated problem (see Fig.~\ref{Fig:map}).  We further present the convergence of the resulting regularized truncated finite element approximation of \eqref{eq:SPDE}, which constitutes the main numerical result of this work.

\subsection{Truncated approximation} 
To solve \eqref{eq:SPDE} numerically, we first consider its truncated approximation.
For each $R>0$, let $\eta_R:\Gamma\to [0,1]$ be a continuous cut-off function such that 
\begin{equation*}
    \eta_R(z,k)=\begin{cases}
        1,\quad \text{if }0\le z\le R,\\
        0,\quad \text{if }z\ge R+1.
    \end{cases}
\end{equation*}
Since $\eta_R$ does not depend on the edge index $k$, we write $\eta_R(z)$ in place of $\eta_R(z,k)$ for simplicity. 
In what follows, suppose that $R> H _0$ so that $1-\eta_R$ vanishes identically on $\cup_{k=1}^{m-1}I_k$.
We propose the following truncated problem
\begin{equation}\label{eq:uR}
\partial_t u^R(t,z,k)=\mathcal{L}^Ru^R(t,z,k)+b(u^R(t,z,k))\eta_R(z)+g(u^R(t,z,k))\eta_R(z)\partial_t W(t,z,k)
\end{equation}
subject to the initial value $u^R(0)=u_0$, where 
\begin{equation}
\mathcal{L}^R f(z, k)=\frac{1}{ 2\beta_k(z)} \frac{\ud}{\ud z}\left(\alpha_k^R(z) \frac{\ud f}{\ud z}(z,k)\right), \quad\text{ if } (z,k)\in\mathring{I}_k,\quad k=1,\ldots,m
\end{equation}
with $\alpha_k^R:=\alpha_k\cdot \eta_R$. The operator $\mathcal{L}^R$ is subject to the same gluing conditions as $\mathcal{L}$ (see \eqref{eq:glue}). 
The well-posedness of the truncated equation \eqref{eq:uR} will be shown in Lemma \ref{lem:wp-trun}. 
We now state a theorem estimating the truncation error of \eqref{eq:uR} for approximating \eqref{eq:SPDE}, which provides an upper bound for the truncation error in terms of $\alpha_k$ and $\gamma$.
\begin{theorem}\label{tho:truncate}
Let  Assumption \ref{asp:aT} hold, and $\{1_\Gamma,u_0\}\subset L_{\beta\gamma}^2(\Gamma)\cap L_{\beta\sqrt\gamma}^2(\Gamma)$. Then  there exists a constant $C:=C(T,u_0)>0$ such that for any $t\in[0,T]$ and $R\ge H _0+1$,
\begin{align*}
\E\left[\|u(t)-u^R(t)\|_{L^2_{\beta\gamma}(\Gamma)}^2\right]
\le  C\sup_{z\ge R-1}\left\{ \alpha_{m}(z)\sqrt{\gamma_m(z)}\right\}.
\end{align*}
\end{theorem}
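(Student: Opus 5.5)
The plan is to compare $u$ and $u^R$ through an energy (Itô) estimate for the error $e:=u-u^R$ in $L^2_{\beta\gamma}(\Gamma)$. The decay factor $\alpha_m\sqrt{\gamma_m}$ is to come from a priori bounds for $u$ in the weaker weight $\sqrt\gamma$. Subtracting \eqref{eq:uR} from \eqref{eq:SPDE} gives
\begin{align*}
\ud e(t)=\bigl[\mathcal{L}^R e+(\mathcal{L}-\mathcal{L}^R)u\bigr]\ud t+\bigl[B(u)-\eta_R B(u^R)\bigr]\ud t+\bigl[G(u)-\eta_R G(u^R)\bigr]\ud W .
\end{align*}
In weak form, $(\mathcal{L}-\mathcal{L}^R)u$ acts only on $I_m\cap\{z\ge R\}$, through the form $\frac12\int \alpha_m(1-\eta_R)u'(\bar\psi\gamma_m)'\ud z$. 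I also split $B(u)-\eta_RB(u^R)=\eta_R(B(u)-B(u^R))+(1-\eta_R)B(u)$, and similarly for $G$. The first pieces are Lipschitz in $e$ by \eqref{eq:BLip}. The second pieces are supported in $\{z\ge R\}$ and are bounded by $C(1+|u|)$ there.

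\textbf{Step 1 (well-posedness and Itô formula).} Using the truncated form $a^R_0$ (the form \eqref{eq:ap} with $\alpha$ replaced by $\alpha^R$), I check that the proof of Theorem~\ref{lem:operatorL} goes through for $\mathcal{L}^R$ with the same $\kappa$, because $\alpha^R\le\alpha$. This is the content promised in Lemma~\ref{lem:wp-trun}. I then apply the variational Itô formula \cite[Theorem 4.2.5]{LR15} to $\|e(t)\|^2_{L^2_{\beta\gamma}(\Gamma)}$. As in \eqref{eq:Lut}, the term $-2a_0^R(e,e)$ produces $-\frac12\int\alpha^R|e'|^2\gamma$ plus $C\|e\|^2$. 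The Lipschitz pieces and the Itô correction give $C\|e\|^2$. The support pieces give
\begin{align*}
C\int_{z\ge R}(1+|u|^2)\beta_m\gamma_m\ud z\le C\sup_{z\ge R}\sqrt{\gamma_m}\int_{I_m}(1+|u|^2)\beta_m\sqrt{\gamma_m}\ud z .
\end{align*}
Since $\alpha_m(z)\sim cz\ge c$ for large $z$, this is at most $C\sup_{z\ge R}\alpha_m\sqrt{\gamma_m}$ times a quantity controlled by Lemma~\ref{lem:L2} in the weight $\sqrt\gamma$.

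\textbf{Step 2 (the $\sqrt\gamma$ a priori bounds).} The weight $\sqrt\gamma$ satisfies Assumption~\ref{asp:aT}: the ratio $\alpha_k|(\sqrt{\gamma_k})'|^2/(\beta_k\gamma_k)$ equals $\frac14\alpha_k|\gamma_k'|^2/(\beta_k\gamma_k^2)$, so the constant $\kappa/4$ works, and $1_\Gamma,u_0\in L^2_{\beta\sqrt\gamma}(\Gamma)$ by hypothesis. Hence Lemmas~\ref{lem:L2} and~\ref{lem:H1} apply with $\gamma$ replaced by $\sqrt\gamma$. They give $\E\sup_t\|u\|^2_{L^2_{\beta\sqrt\gamma}}$ and $\E\int_0^T\int\alpha|u'|^2\sqrt\gamma$ bounded by $C(T,u_0)$. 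This explains the hypothesis on $u_0$ and the constant $C(T,u_0)$.

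\textbf{Step 3 (main obstacle: the operator-difference term).} The remaining term is
\begin{align*}
\frac12\int_{I_m}\alpha_m(1-\eta_R)\,u'\,\bigl(\bar e'\gamma_m+\bar e\gamma_m'\bigr)\ud z .
\end{align*}
The difficulty is that $e'$ is not controlled by the dissipation where $\eta_R<1$, since $\alpha^R$ degenerates there. I would treat the two parts of $e=u-u^R$ separately. The part $\int\alpha(1-\eta_R)|u'|^2\gamma$ is bounded by $\sup_{z\ge R}\sqrt{\gamma_m}\int\alpha|u'|^2\sqrt\gamma$, using Step 2. The part $\int\alpha(1-\eta_R)u'\,\overline{(u^R)'}\gamma$ lives on $\{R\le z\le R+1\}$ and on $\{z\ge R+1\}$. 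On $\{z\ge R+1\}$ the dynamics of $u^R$ is frozen and deterministic; there I expect to pass the derivative back onto $u$ by integrating by parts in $z$, with the weight $\alpha'$ bounded. On $[R,R+1]$ I would split by Young's inequality against the partial dissipation $\eta_R\alpha|(u^R)'|^2$, possibly enlarging the region to $\{z\ge R-1\}$. That enlargement is the likely source of $\sup_{z\ge R-1}$ in the statement. The lower-order pieces containing $\gamma_m'$ are handled by \eqref{eq:theta}, which turns them into $L^2_{\beta\gamma}$ norms of $u$ and $e$ on the tail. This tail term is where I expect the real work, and the first attempt would be exactly this Young/Cauchy--Schwarz split with the weight $\sqrt{\gamma}\cdot\sqrt{\gamma}$.

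\textbf{Step 4 (conclusion).} Taking expectations kills the martingale term. The Lipschitz and dissipative contributions are at most $C\int_0^t\E\|e(s)\|^2_{L^2_{\beta\gamma}(\Gamma)}\ud s$, and the tail contributions are at most $C\sup_{z\ge R-1}\alpha_m\sqrt{\gamma_m}$. Since $e(0)=0$, Gronwall's inequality then yields the claimed bound uniformly in $t\in[0,T]$.
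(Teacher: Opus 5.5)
Your Step 3 is a genuine gap: the direct energy estimate for $e=u-u^R$ in the full weight $\gamma$ cannot be closed with the tools you list.

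First, $e$ does not lie in the form domain on the tail. For $z>R+1$ one has $u^R(t)=u_0$, and $u_0$ is only assumed to lie in $L^2_{\beta\gamma}\cap L^2_{\beta\sqrt\gamma}$. So $(u^R)'$ there, and with it the cross term $\int\alpha_m(1-\eta_R)u'\,(u^R)'\gamma_m\,\ud z$, is not even defined. Moreover, $u^R(t)$ need not match $u_0$ across $z=R+1$. Hence the It\^o formula for $\|e\|^2_{L^2_{\beta\gamma}(\Gamma)}$ with the form of $\mathcal{L}$ is not justified.

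Your idea of integrating by parts to move the derivative onto $u$ does not rescue this. It needs $(\alpha_m u')'$, i.e.\ $\mathcal{L}u\in L^2_{\beta\gamma}$, which is unavailable for a mild solution with $L^2$ data and multiplicative noise. It also creates boundary terms with point values of $u'$.

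On $[R,R+1]$ the other route fails too. Young's inequality against the available dissipation $\alpha_m\eta_R|e'|^2\gamma_m$ leaves $\alpha_m(1-\eta_R)^2\eta_R^{-1}|u'|^2\gamma_m$. Its weight blows up as $z\to R+1$, and no a priori bound controls it; this is exactly the obstruction the paper records in the remark after its proof. Separately, your Step 2 gives $\sqrt\gamma$-moments only for $u$, but you will also need them for $u^R$, uniformly in $R$.

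The missing idea is never to test the equation on the region where $\eta_R<1$. Split $\|e\|^2_{L^2_{\beta\gamma}}$ into two parts.

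\textbf{Tail part.} On $\{(z,m):z>R-1\}$, simply bound
$\int_{R-1}^\infty|e|^2\beta_m\gamma_m\,\ud z\le 2\sup_{z\ge R-1}\sqrt{\gamma_m(z)}\bigl(\|u\|^2_{L^2_{\beta\sqrt\gamma}}+\|u^R\|^2_{L^2_{\beta\sqrt\gamma}}\bigr)$.
Your observation that $\sqrt\gamma$ satisfies \eqref{eq:theta} is exactly what yields these moment bounds.

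\textbf{Interior part.} Apply It\^o's formula to $\|e\|^2_{L^2_{\beta\tilde\gamma}}$ with $\tilde\gamma=\gamma\zeta_R$. Here $\zeta_R$ is smooth, equals $1$ on $[0,R-1]$ and $0$ on $[R,\infty)$, and satisfies $\zeta_R'(R-1)=\zeta_R'(R)=0$.
\begin{itemize}
\item Since $\mathrm{supp}\,\zeta_R\subset\{\eta_R=1\}$, the operator-difference term $\alpha(\eta_R-1)\partial_z u\,\partial_z(e\tilde\gamma)$ vanishes identically. So do the contributions of $(1-\eta_R)B(u)$ and $(1-\eta_R)G(u)$.
\item The only new term is the commutator $-\int_{R-1}^{R}\alpha_m\,e\,e'\,\gamma_m\zeta_R'\,\ud z$. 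After integration by parts, with boundary terms vanishing because $\zeta_R'$ does, it equals $\frac12\int_{R-1}^{R}e^2(\alpha_m\gamma_m\zeta_R')'\,\ud z$, which contains no derivative of $e$.
\item Using that $|\alpha_m'|$ is bounded, together with \eqref{eq:theta} and $\alpha_m(z)\sim cz$, this term is at most $C\sup_{z\ge R-1}\{\alpha_m\sqrt{\gamma_m}\}$ times the $\sqrt\gamma$-moments of $u$ and $u^R$.
\end{itemize}
Gronwall's inequality then concludes, since $e(0)=0$. The transition layer $[R-1,R]$ of $\zeta_R$ is where the $R-1$ in the statement comes from.
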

We postpone the proof of
Theorem \ref{tho:truncate} to Section \ref{S:Trun}. Theorem \ref{tho:truncate}
 implies the convergence of the truncated problem \eqref{eq:uR} to \eqref{eq:SPDE}, provided that 
\begin{align}\label{eq:convercond}
 1_\Gamma\subset L_{\beta\gamma}^2(\Gamma)\cap L_{\beta\sqrt\gamma}^2(\Gamma),\quad  \text{and}\quad\lim_{R\to\infty}\sup_{z\ge R}z\sqrt{\gamma_m(z)}=0.
\end{align}
The above condition \eqref{eq:convercond} holds for the graph weight function $\gamma$ in both cases (i) and (ii) of Example \ref{ex:gammafunction}, provided that $\rho_3>2$. 
It would be interesting to further study the convergence of the truncation procedure for polynomially decaying weight functions where the decay is at most quadratic (i.e., case (ii) of Example \ref{ex:gammafunction} with $\rho_3\in(1,2]$). We also note that the convergence rate obtained in Theorem~\ref{tho:truncate} may not be optimal. 

Since $\textup{supp}(\eta_R)\subset[0,R+1]$, according to \eqref{eq:uR}, we have $\partial_t u^{R}(t,z,m)=0$  for any $z\ge R+1$,
which means that $u^R(t,z,m)=u^R(0,z,m)=u_0(z,m)$ for all $t\in[0,T]$ and $z\ge R+1$.
It thus suffices to solve \eqref{eq:uR}  on the compact graph $$\Gamma^{R}:=\{(z,k)\in\Gamma: z\le R+1\}.$$ The truncated graph $\Gamma^R$ consists of $m$ edges $\{J_k\}_{k=1}^m$ and ${n}+1$ vertices $\{O_i\}_{i=1}^{{n}+1}$ with the additional vertex  $O_{{n}+1}:=(R+1,m)$.
For $k=1,\ldots,m-1$, the edges are given by $J_k=I_k$, while the last edge is 
$
J_m := \{(z,m)\in I_m : z \le R+1\}\cong [H_0,R+1].
$
Define a differential operator $\check{\mathcal{L}}$ on $\Gamma^R$ as
\begin{equation}\label{eq:checkL}
\check{\mathcal{L}}f(z,k)=\frac{1}{2\beta_k(z)}\frac{\ud}{\ud z}\Big(\alpha_k^R(z)\frac{\ud}{\ud z}f(z,k)\Big),\quad \text{if } (z,k)\in\mathring{J}_k,\quad k=1,\ldots,m
\end{equation}
 endowed with the same gluing conditions as $\mathcal{L}^R$.
According to \cite[Lemma~1.1]{FW12}, for any $(z,k)\in\Gamma$,
\begin{align*}
\frac{\mathrm{d}}{\mathrm{d} z}\alpha_k(z)
&= \frac{\mathrm{d}}{\mathrm{d} z}
   \oint_{\textup{C}_k(z)} |\nabla H(x)| \mathrm{d} l_{z,k}
 = \oint_{\textup{C}_k(z)} \frac{\Delta H(x)}{|\nabla H(x)|} \mathrm{d} l_{z,k} \sim \Delta H (\mathbf{x}_i)\beta_k(z),
\quad \text{as } z \to H (\mathbf{x}_i).
\end{align*}
By the asymptotic behavior \eqref{eq:T} of $\beta$ near the vertices, it follows that as $(z,k)\to O_i$,
\begin{equation}\label{eq:deri-alp}
\frac{\mathrm{d}}{\mathrm{d} z}\alpha_k(z)
\sim
\begin{cases}
C,
& \text{if $O_i$ is an exterior vertex}, \\[0.3em]
C\Delta H (\mathbf{x}_i)\bigl|\log |z - H (\mathbf{x}_i)|\bigr|,
& \text{if $O_i=(H (\mathbf{x}_i),k)$ is an interior vertex}.
\end{cases}
\end{equation}To control the decay rate of $\alpha^R$ near the vertex $O_{{n}+1}=(R+1,m)$, we choose the cut-off function $\eta_R$ such that
\begin{equation}\label{eq:etaR}
    \frac{\ud^-}{\ud z}\eta_R(R+1)=
\lim_{z\to(R+1)^-}\frac{\eta_R(z)-\eta_R(R+1)}{z-(R+1)}=K_0
\end{equation} for some
$K_0\neq 0$ so that
\begin{equation}\label{eq:alphaR}
\frac{\ud^-}{\ud z}\alpha_m^R(z)
= \alpha_m'(R+1)\eta_R(R+1)
+ \alpha_m(R+1)\frac{\ud^-}{\ud z}\eta_R(R+1)
= \alpha_m(R+1)K_0\neq 0.
\end{equation}
In view of \eqref{eq:alphaR}, as $(z,k)\to O_i$,
\begin{equation}\label{eq:deri-alpR}
\frac{\mathrm{d}}{\mathrm{d} z}\alpha_k^R(z)
\sim
\begin{cases}
C, & \text{if $O_i$ is an exterior vertex with $i\neq n+1$,} \\
C(R), & \text{if $O_i$ is the exterior vertex $O_{n+1}$,} \\
C\Delta H (\mathbf{x}_i)\bigl|\log|z - H(\mathbf{x}_i)|\bigr|, & \text{if $O_i=(H(\mathbf{x}_i),k)$ is an interior vertex.}
\end{cases}
\end{equation}
Consequently,
$
\alpha_k^{R}(z)\sim \mathrm{const}\cdot |z-(R+1)|$
as  $(z,k)\to O_{{n}+1}$. In other words, at the exterior vertex $O_{{n}+1}$ of $\Gamma^R$, the function
$\alpha_k^R$ also decays linearly, as at the other exterior vertices.

  Restricted to the compact graph $\Gamma^R$, the stochastic process $\check{u}(t):=u^R(t)\mid_{\Gamma^R}$ satisfies
\begin{align}\label{eq:local}\ud \check{u}(t)=(\check{\mathcal{L}}\check{u}(t)+\check{B}(\check{u}(t)))\ud t+\check{G}(\check{u}(t))\ud W(t),\quad \check{u}(0)=u_0\mid_{\Gamma^R}.
\end{align}
 The nonlinearity $\check B: L^2_{\beta\gamma}(\Gamma^R)\to L^2_{\beta\gamma}(\Gamma^R)$ and $\check{G}:L^2_{\beta\gamma}(\Gamma^R)\to \mathscr{L}_2(\mathcal{U}_0,L^2_{\beta\gamma}(\Gamma^R))$ are, respectively, defined by, 
 $$\check B(f_1)(z,k):=b(f_1(z,k))\eta_R(z),\quad \check G(f_1)(\psi)(z,k)=g(f_1(z,k))\eta_R(z)\psi(z,k),\quad (z,k)\in\Gamma^R,$$
 where $f_1\in L^2_{\beta\gamma}(\Gamma^R)$ and $\psi\in \mathcal{U}_0$. In view of \eqref{eq:A} and \eqref{eq:T}, the operator
$\check{\mathcal{L}}$ fails to be uniformly elliptic on the graph
$\Gamma^R$, since the coefficient $\alpha_k^R/\beta_k$ vanishes at vertices. 
To handle this degeneracy, we introduce a regularization of the truncated problem \eqref{eq:local} in the next subsection.
\subsection{Regularized truncated approximation}
For each edge $J_k$ with $k=1,2,\ldots,m-1$ on $\Gamma^R$, we divide it into two subintervals
$J_k^{(1)}$ and $J_k^{(2)}$ of equal length. For $k=m$, we split
 $J_m \cong [ H _0, R+1]$ into two subintervals
\[
J_m^{(1)} \cong [ H _0,  H _0+1]
\quad \text{and} \quad
J_m^{(2)} \cong [ H _0+1, R+1].
\]
With this decomposition, each subinterval $\{J_k^{(l)}\}_{l=1,2}$ is incident to exactly one
vertex of the compact graph $\Gamma^R$.
We can define the regularization locally depending on whether the adjacent vertex is an exterior or an interior vertex. In view of \eqref{eq:A} and \eqref{eq:T}, it suffices to regularize $\alpha_k^{R}$ near exterior vertices and $\beta_k$ near interior vertices. To make this precise, set $\delta_{\min}:= \frac14\min_{1\le k\le m}|J_k|$, where $|J_k|$ is the  length of the edge $J_k$. For
 $\delta \in(0, \delta_{\min})$, we define
\begin{align}\label{eq:beta1}
\beta_k^\delta(z):=\beta_k(z),~(z,k)\in J_k^{(l)},\quad \text{if} ~J_k^{(l)}~ \text{is incident to an exterior vertex } O_i=(H (\mathbf{x}_i),k),\\\label{eq:alpha1}
\alpha_k^{R,\delta}(z):=\alpha_k^R(z),(z,k)\in J_k^{(l)},\quad \text{if} ~J_k^{(l)}~\text{is incident to an interior vertex } O_i=(H (\mathbf{x}_i),k).
\end{align}
For the remaining cases, the regularization functions can, for instance, be defined as in Example \ref{ex:reg} provided later. More generally, we impose the following assumption on the regularization functions, which is satisfied in particular by Example \ref{ex:reg}.

\begin{assumption}[Regularization functions]\label{asp:reg}
For some $\delta_0>0$ and for all $\delta\in(0,\delta_0)$ , there exist positive constants $\mathfrak{c}_1(\delta)$ and $\mathfrak{c}_4(\delta)$ depending on $\delta$ such that for any $(z,k)\in\mathring{J}_k$ and $k=1,\ldots,m$,
\begin{gather}\label{eq:delta1}
\max\{\mathfrak{c}_0\alpha_k^R(z),\mathfrak{c}_1(\delta)\}\le \alpha_k^{R,\delta}(z)\le \mathfrak{c}_2(\alpha_k+1),\quad\lim_{\delta\to0}\alpha_k^{R,\delta}(z)=\alpha_k^R(z),\\\label{eq:delta2}
0<\mathfrak{c}_3\le \beta_k^{\delta}(z)\le \min\{\mathfrak{c}_4(\delta),\mathfrak{c}_5\beta_k(z)\},\quad\lim_{\delta\to0}\beta_k^{\delta}(z)=\beta_k(z),
\end{gather}
where $\mathfrak{c}_0,\mathfrak{c}_2,\mathfrak{c}_3,$ and $\mathfrak{c}_5$ are some constants independent of $R$ and $\delta$.
\end{assumption}

Accordingly, for $\delta\in(0,\delta_0)$, we define the regularization of $\check{\mathcal{L}}$ as follows
 \begin{equation}\label{eq:Ldelta}
	\check{\mathcal{L}}^\delta f(z,k)=\frac{1}{ 2\beta_k^\delta(z)} \frac{\ud}{\ud z}\left(\alpha_{k}^{R,\delta}(z)\frac{\ud}{\ud z}f(z,k)
	\right),\quad \text{if } (z,k) \in\mathring{J}_k,k=1,\ldots,m,
\end{equation}
 endowed with the following gluing conditions: the function $f$ is continuous at each interior vertex $O_i$ on $\Gamma^R$ and satisfies the following Kirchhoff condition
\begin{equation}\label{eq:gluedelta}
	\sum_{k: I_k \text{ is incident to } O_i} \alpha_{k}^{R,\delta}(H (\mathbf{x}_i))\ud_k f(H (\mathbf{x}_i),k) = 0,
	\qquad \text{for each vertex } O_i \text{ of } \Gamma^R .
\end{equation}
Now we can introduce the regularized truncated approximation of \eqref{eq:SPDE}:
\begin{align}\label{eq:reguarlized}
\ud \check{u}^\delta(t)=(\check{\mathcal{L}}^\delta\check{u}^\delta(t)+\check{B}(\check{u}^\delta(t)))\ud t+\check{G}(\check{u}^\delta(t))\ud W(t),\quad \check{u}^\delta(0)=\check{u}(0).
\end{align}
We impose the following assumption to ensure the well-posedness of the truncated equation \eqref{eq:reguarlized} (see Lemma \ref{lem:analydelta} for more details).
\begin{assumption}[Weight function]\label{asp:reg-gamma}
There exists a constant $\kappa_1>0$ such that for any $\delta\in(0,\delta_0)$,
\begin{equation*}
\alpha_k^{R,\delta}(z)|\gamma^\prime_k(z)|^2\le \kappa_1\beta_k^\delta(z)|\gamma_k(z)|^2\quad\text{for}~ (z,k)\in \Gamma^R.
\end{equation*}
\end{assumption}
In view of Assumption \ref{asp:reg}, Assumption \ref{asp:reg-gamma} holds under the condition \eqref{eq:gammader}. The following theorem identifies the limiting equation of the regularized truncated equation \eqref{eq:reguarlized} with the truncated problem \eqref{eq:uR}, as the regularization parameter $\delta\to 0$. 

\begin{theorem}\label{tho:con-in-pro}
Let \eqref{eq:etaR} and Assumptions \ref{asp:reg}--\ref{asp:reg-gamma} hold. Assume that $1_\Gamma\in L^2_{\beta\gamma}(\Gamma)$, and $u_0\in W^{1,2}_{(\alpha+1)\gamma,\beta\gamma}(\Gamma)$. Then for any $R>H_0$, $p\ge1$ and  $t\in[0,T]$,
\begin{align*}
\lim_{\delta\to 0}\E\bigg[\sup_{t\in[0,T]}\|\check{u}^\delta(t)-\check{u}(t)\|_{L_{\beta\gamma}^2(\Gamma^R)}^p\bigg]=0.
\end{align*}
\end{theorem}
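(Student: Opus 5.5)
We follow the route announced in the introduction: uniform-in-$\delta$ energy estimates, then tightness, then identification of the limit together with the Gy\"ongy--Krylov lemma \cite[Lemma 1.1]{GK96}.

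\textbf{Step 1: well-posedness for fixed $\delta$.} Repeating the argument of Theorem \ref{lem:operatorL} on $\Gamma^R$ with $(\alpha^{R,\delta},\beta^\delta)$ in place of $(\alpha,\beta)$, Assumption \ref{asp:reg-gamma} shows that $\check{\mathcal L}^\delta-\lambda I$ generates an analytic contraction semigroup on $L^2_{\beta^\delta\gamma}(\Gamma^R)$ for $\lambda\ge\kappa_1/8$. The constants depend only on $\kappa_1$, not on $\delta$. Since $\mathfrak c_3\le\beta^\delta\le\mathfrak c_5\beta$, the norms of $L^2_{\beta^\delta\gamma}(\Gamma^R)$ are dominated by those of $L^2_{\beta\gamma}(\Gamma^R)$ uniformly in $\delta$. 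Hence \cite[Theorem 7.5]{DZ14} gives a unique mild solution $\check u^\delta$ with
\[
\sup_\delta\E\sup_{t\le T}\|\check u^\delta(t)\|^p_{L^2_{\beta^\delta\gamma}}<\infty .
\]

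\textbf{Step 2: uniform energy bounds.} Applying It\^o's formula as in Lemma \ref{lem:H1} gives a bound on $\E\int_0^T\|\partial_z\check u^\delta\|^2_{L^2_{\alpha^{R,\delta}\gamma}}\,\ud t$, uniform in $\delta$. For tightness we need more: a bound on $\E\sup_t\|\check u^\delta(t)\|^p_{W^{1,2}_{\alpha^{R,\delta}\gamma,\beta^\delta\gamma}}$ and a time-H\"older estimate in $L^2$.

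\emph{Deterministic part.} Since $u_0\in W^{1,2}_{(\alpha+1)\gamma,\beta\gamma}$ and $\alpha^{R,\delta}\le\mathfrak c_2(\alpha+1)$, the initial energy is bounded uniformly in $\delta$. We then use the smoothing of the analytic semigroup.

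\emph{Stochastic convolution.} Here non-symmetry is the issue. On the compact graph $\gamma$ is bounded above and below, so the $W^{1,2}_{\alpha^{R,\delta}\gamma,\beta^\delta\gamma}$-norm is equivalent to the norm with $\gamma\equiv1$. With $\gamma\equiv1$ the operator is self-adjoint, and its energy norm equals $\|(I-\check{\mathcal L}^\delta)^{1/2}\cdot\|$. Combined with the factorization method, this controls $\sup_t$ of the $W^{1,2}$-norm of the convolution, uniformly in $\delta$. The $\gamma$-weighted operator differs only by a first-order term bounded via Assumption \ref{asp:reg-gamma}, which we absorb by a perturbation argument.

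\emph{Passing to the unregularized weights.} Using $\alpha^{R,\delta}\ge\mathfrak c_0\alpha^R$ and $\beta^\delta$ versus $\beta$ near interior vertices, we transfer the bounds to $W^{1,2}_{\alpha^R\gamma,\beta\gamma}(\Gamma^R)$. Near a saddle, the integrability of the logarithm lets us bound $\int|f|^2\beta$ by $\sup|f|^2$, and $\sup|f|$ is controlled through $W^{1,2}$ because $\alpha$ is bounded below there.

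\textbf{Step 3: tightness.} We need the embedding $W^{1,2}_{\alpha^R\gamma,\beta\gamma}(\Gamma^R)\hookrightarrow L^2_{\beta\gamma}(\Gamma^R)$ to be compact. Near exterior vertices, including $O_{n+1}$, $\alpha^R$ decays linearly by \eqref{eq:etaR}, and \cite[Claim 1 in Lemma 5.6]{CS25} applies. Near interior vertices the embedding into $\mathcal C$ plus the integrability of $\beta$ handles it. Aldous or Arzel\`a--Ascoli type criteria, using the H\"older estimate in time, then yield tightness of the laws of $\check u^\delta$ in $\mathcal C([0,T];L^2_{\beta\gamma}(\Gamma^R))$. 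The same holds for pairs $(\check u^{\delta_n},\check u^{\delta_n'})$.

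\textbf{Step 4: identification and conclusion.} By Skorokhod's theorem we pass to the limit along a subsequence in the weak formulation against test functions $\psi$ in $D(\check{\mathcal L})$, or in a dense core. The terms involve $\alpha^{R,\delta}\psi'$ and $\beta^\delta$. These converge pointwise by Assumption \ref{asp:reg}, and dominated convergence applies thanks to the uniform bounds. Both limits therefore solve \eqref{eq:local} weakly, and weak solutions coincide with the mild solution. Pathwise uniqueness for \eqref{eq:local} then shows the limits of any pair coincide. Gy\"ongy--Krylov gives convergence in probability in $\mathcal C([0,T];L^2_{\beta\gamma})$. Uniform $p'$-moments with $p'>p$ give uniform integrability, and hence convergence in $L^p$.

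\textbf{Main obstacle.} The hardest part is Step 2: controlling the stochastic convolution in the energy norm uniformly in $\delta$ despite non-symmetry and the degenerate, $\delta$-dependent weights. I would first carry it out for $\gamma\equiv1$ using self-adjointness and then handle general $\gamma$ by the norm equivalence above.
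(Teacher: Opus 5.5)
Your overall architecture matches the paper's: uniform bounds, tightness in $\mathcal{C}([0,T];L^2_{\beta\gamma}(\Gamma^R))$, Skorokhod, identification, pathwise uniqueness, Gy\"ongy--Krylov, then uniform integrability. The gap is in Step 2: the factorization method does not give the key estimate $\E\sup_{t}\|r^\delta(t)\|^p_{W^{1,2}_{\alpha^{R,\delta}\gamma,\beta^\delta\gamma}}$ for the stochastic convolution $r^\delta(t)=\int_0^t e^{(t-s)\check{\mathcal{L}}^\delta}\check G(\check u^\delta(s))\,\ud W(s)$ when $\check G(\check u^\delta)$ is only Hilbert--Schmidt into $L^2$. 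Write $\mathcal{A}=\lambda I-\check{\mathcal{L}}^\delta$ and $Y_a(s)=\int_0^s(s-r)^{-a}e^{-(s-r)\mathcal{A}}\check G(\check u^\delta(r))\,\ud W(r)$. Finite moments of $Y_a$ force $a<\frac12$. Bounding $\mathcal{A}^{\theta}\int_0^t(t-s)^{a-1}e^{-(t-s)\mathcal{A}}Y_a(s)\,\ud s$ uniformly in $t$ forces $\theta<a-\frac1p$. So you only reach $D(\mathcal{A}^\theta)$ with $\theta<\frac12$. The energy space is exactly the critical case $\theta=\frac12$, and sup-in-time control there would need extra spatial regularity of the noise (e.g.\ moments of $\mu$), which is not assumed.

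What the paper proves instead is only a pointwise-in-time bound, $\sup_{t}\E\|r^\delta(t)\|^p_{W^{1,2}_{\alpha^{R,\delta}\gamma,\beta^\delta\gamma}}\le C$ (Step 2 of Lemma \ref{lem:rHolder}). It applies Burkholder first. Inside the resulting pathwise time integral it splits $\check G(\check u^\delta(s))=[\check G(\check u^\delta(s))-\check G(\check u^\delta(t))]+\check G(\check u^\delta(t))$. The frozen term is handled by the square-function estimate \eqref{eq:criti}. The difference is handled by $(t-s)^{-1}$ times the time-H\"older continuity of $\check u^\delta$ in $L^2_{\beta^\delta\gamma}$. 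Your reduction to $\gamma\equiv1$ is the right idea, but no perturbation argument is needed. The operator $\check{\mathcal{L}}^\delta$ and its fractional powers do not depend on $\gamma$; only the inner product does, and on $\Gamma^R$ the two inner products are equivalent with constants depending only on $R$ (see \eqref{eq:self-adj}--\eqref{eq:rW12}).

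Since you only have $\sup_t\E$ and not $\E\sup_t$, Steps 3 and 4 must be rerouted, and a second ingredient is then missing. The semigroup estimates give H\"older continuity of $r^\delta$ only in $L^2_{\beta^\delta\gamma}(\Gamma^R)$. That norm is not uniformly equivalent to the $L^2_{\beta\gamma}(\Gamma^R)$-norm near saddle vertices. Your bound $\int|f|^2\beta\le C\sup|f|^2\le C\|f\|^2_{W^{1,2}}$ does not transfer smallness of increments from one norm to the other.

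The paper instead applies the interpolation inequality \eqref{eq:L2beta}, $\|f\|^2_{L^2_{\beta\gamma}}\le C\|f\|_{L^2_{\beta^\delta\gamma}}\|f\|_{W^{1,2}_{\alpha^{R,\delta}\gamma,\beta^\delta\gamma}}$, to increments $f=r^\delta(t)-r^\delta(s)$. Combined with the pointwise moment bound, this gives H\"older moments in $L^2_{\beta\gamma}$ of order $\vartheta<\frac14$. Garsia--Rodemich--Rumsey then yields $\E|r^\delta|^p_{\mathcal{C}^\vartheta([0,T];L^2_{\beta\gamma}(\Gamma^R))}\le C$.

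Tightness then follows from Simon's criterion, using only an $L^1(0,T;W^{1,2}_{\alpha^R\gamma,\beta\gamma}(\Gamma^R))$ bound plus this H\"older seminorm for $r^\delta$, and Aubin--Lions for the drift part $v^\delta$. This replaces Arzel\`a--Ascoli with an $L^\infty(0,T;W^{1,2})$ bound. The bound on $\E\sup_t\|\check u^\delta(t)\|^q_{L^2_{\beta\gamma}(\Gamma^R)}$, $q>p$, needed for uniform integrability also comes from this H\"older estimate. Your Step 1 cannot supply it, because it only controls the weaker $L^2_{\beta^\delta\gamma}$-norm.
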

The proof of Theorem \ref{tho:con-in-pro} can be found in Section \ref{S:Reg}.
For a fixed truncation parameter $R >  H _0 $ and a
fixed regularization parameter $\delta \in (0,\delta_0)$, the regularized truncated problem \eqref{eq:reguarlized} becomes a non-degenerate stochastic parabolic PDE posed on the compact graph $\Gamma^R$. Our goal is to construct and analyze a numerical method for approximating the solution $\check{u}^\delta$ to \eqref{eq:reguarlized} for each $R>H_0$ and $\delta>0$, and for approximating the original equation \eqref{eq:SPDE} via \eqref{eq:reguarlized} in the limits $\delta\to 0$ and $R\to\infty$.

\subsection{Regularized truncated finite element approximation}
Inspired by
\cite{AB18,BKKS24}, we construct a finite element discretization for \eqref{eq:reguarlized} based on an equally
spaced partition of the edges of the truncated graph $\Gamma^R$.
More precisely, for each edge $J_k$ we introduce a uniform subdivision into $n_k$
subintervals of length $h_k$. Let $h:=\min_{k=1}^m h_k$, and $\{(z_j,k)\}_{j=1}^{n_k-1}$ denote the interior nodes
of this subdivision. On each edge $J_k$, we define the standard hat basis functions by
\begin{align*}
\xi_j^k(z,k)=
\begin{cases}
1-\dfrac{|z-z_j|}{h_k}, &\quad z\in[z_{j-1},z_{j+1}],\\[0.3em]
0, &\quad \text{otherwise},
\end{cases}
\qquad j=1,2,\ldots,n_k-1,
\end{align*}
where $(z_0,k)$ and $(z_{n_k},k)$ are the vertices of $\Gamma^R$ attached to the
edge $J_k$, with $z_0 < z_{n_k}$. To connect the edges in the finite element approximation, we also introduce basis functions
associated with the vertices of the graph $\Gamma^R$. Specifically, for each vertex
$\{O_i\}_{i=1}^{{n}+1}$ of the graph $\Gamma^R$, 
the hat function centered at $O_i$ is defined by
\begin{align*}
\zeta_{i}(z,k)=
\begin{cases}
1-\dfrac{|z-H (\mathbf{x}_i)|}{h_k}, &\quad (z,k)\in\mathbb{W}_{O_i} ,\\[0.3em]
0, &\quad \text{otherwise},
\end{cases}
\end{align*}
where  $H (\mathbf{x}_i)$ denotes the $ z $-coordinate of $O_i$, and $\mathbb{W}_{O_i}$ is a neighborhood set of $O_i$ given by
\begin{align*}
\mathbb{W}_{O_i} = \bigcup_{k : I_k \text{ is incident to } O_i} \left\{ (z,k) \in J_k: |z - H (\mathbf{x}_i)| \le h_k \right\}.
\end{align*}
The finite dimensional space $\mathbb{V}_h(\Gamma^R)$ is spanned by the functions $\{\xi_j^k,k=1,\ldots,m;j=1,\ldots,n_k-1\}$ and $\{\zeta_i\}_{i=1}^{n+1}$, and its dimension is $\textup{dim}(\mathbb{V}_h(\Gamma^R))=\sum_{k=1}^m(n_k-1)+(n+1).$ 

Denote by $W^{1,2}_{\mathrm{cont}}(\Gamma^R):=W^{1,2,c}_{\alpha^{R,\delta}\gamma,\beta^\delta\gamma}(\Gamma^R)$ the subspace of $W^{1,2}_{\alpha^{R,\delta}\gamma,\beta^\delta\gamma}(\Gamma^R)$ consisting of functions that are continuous at each interior vertex of $\Gamma^R$.
For $\lambda\in\R$, consider
 the bilinear form $a_{\lambda}^{R,\delta}: W^{1,2}_{\mathrm{cont}}(\Gamma^R)\times W^{1,2}_{\mathrm{cont}}(\Gamma^R)\to \C$ defined by 
\begin{equation}\label{eq:app}
a_{\lambda}^{R,\delta}(\phi,\psi)=\lambda \sum_{k=1}^m\int_{J_k}\phi\overline{\psi}\beta_k^\delta\gamma_k\ud z+\frac12\sum_{k=1}^m\int_{J_k}\alpha_{k}^{R,\delta}\frac{\ud }{\ud z}\phi\frac{\ud }{\ud z}\left(\overline{\psi}\gamma_k\right)\ud z,\quad\phi,\psi\in W^{1,2}_{\mathrm{cont}}(\Gamma^R).
\end{equation}
Following the reasoning in the proof of Theorem \ref{lem:operatorL}, one can show that under Assumption \ref{asp:reg-gamma},
for any $\lambda \ge \tfrac{1}{8}\kappa_1$, the form $a_\lambda^{R,\delta}$ is densely defined,
continuous, closed, and accretive. 
Since $\mathbb{V}_h(\Gamma^R)\subset W^{1,2}_{\mathrm{cont}}(\Gamma^R)$, we can define the discrete operator $\check{\mathcal{L}}^\delta_h:\mathbb{V}_h(\Gamma^R)\to \mathbb{V}_h(\Gamma^R)$ by 
\begin{equation}\label{eq:Lhdelta}
	-\langle \check{\mathcal{L}}^\delta_h\varphi, \psi\rangle_{L^2_{\beta^\delta\gamma}(\Gamma^R)}= a_0^{R,\delta}(\varphi,\psi),\quad \varphi,\psi\in \mathbb{V}_h(\Gamma^R).
\end{equation}
We further introduce the Ritz projection $\mathcal{R}_h:W^{1,2}_{\mathrm{cont}}(\Gamma^R)\to \mathbb{V}_h(\Gamma^R)$ and the $L^2_{\beta^\delta\gamma}(\Gamma^R)$-projection $\mathcal{P}_h$ defined, respectively, via
\begin{gather*}
a_0^{R,\delta}(\mathcal{R}_h\varphi,\psi) = a_0^{R,\delta}(\varphi,\psi),\quad \varphi\in  W^{1,2}_{\mathrm{cont}}(\Gamma^R) ,\psi\in \mathbb{V}_h(\Gamma^R),\\
\langle \mathcal{P}_h\varphi,\psi\rangle_{L^2_{\beta^\delta\gamma}(\Gamma^R)}=\langle \varphi,\psi\rangle_{L^2_{\beta^\delta\gamma}(\Gamma^R)},\quad \varphi\in{L^2_{\beta^\delta\gamma}(\Gamma^R)},\psi\in \mathbb{V}_h(\Gamma^R).
\end{gather*}
We propose the following spatial semi-discrete approximation of \eqref{eq:reguarlized}: 
\begin{equation}\label{eq:udeltah}
\ud \check{u}^\delta_h(t)=(\check{\mathcal{L}}^\delta_h \check{u}^\delta_h(t)+\mathcal{P}_h\check{B}(\check{u}^\delta_h(t)))\ud t+\mathcal{P}_h\check{G}(\check{u}^\delta_h(t))\ud W(t),\quad \check{u}^\delta_h(0)=\mathcal{P}_h\check{u}(0).
\end{equation}

 The following assumption will be used in the interpolation error estimates on the weighted Sobolev spaces on the graph $\Gamma^R$ (see Lemma \ref{lem:int-L}).
\begin{assumption}\label{asp:alphader}
       For any fixed $R>H_0$, $\delta\in(0,\delta_0)$, and $k=1,\ldots,m$, the function $\alpha^{R,\delta}_k$ is continuously differentiable on the edge $J_k$, and its derivative $(\alpha_k^{R,\delta})'\in L^4(J_k)$.
       \end{assumption}

We remark that Assumption \ref{asp:alphader} is also satisfied by Example \ref{ex:reg} provided later.
The following theorem provides an error estimate for the finite element approximation \eqref{eq:udeltah} of the regularized equation \eqref{eq:reguarlized} for nonsmooth initial data, whose proof is deferred to Section \ref{S:FEM}.
\begin{theorem}\label{tho:FEM}
Let Assumptions \ref{asp:reg}, \ref{asp:reg-gamma}, and \ref{asp:alphader} hold, and  $\{u_0,1_\Gamma\}\subset L^2_{\beta\gamma}(\Gamma)$. Then for any $R\ge H_0$, $\delta\in(0,\delta_0)$, and $\vartheta\in(0,\frac12)$, there exists $C:=C(R,\delta,\vartheta,u_0)>0$ such that
\begin{align*}
\E\left[\|\check{u}^\delta(t)-\check{u}^\delta_h(t)\|_{L^2_{\beta\gamma}(\Gamma^R)}^2\right]\le Ch^{2\vartheta}(1+t^{-\vartheta}).
\end{align*}	
\end{theorem}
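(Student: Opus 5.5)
For fixed $R$ and $\delta$ the problem \eqref{eq:reguarlized} is a uniformly parabolic SPDE on the compact graph $\Gamma^R$. By Assumption \ref{asp:reg} and the positivity and continuity of $\gamma$ on $\Gamma^R$, the norms of $L^2_{\beta\gamma}(\Gamma^R)$ and $L^2_{\beta^\delta\gamma}(\Gamma^R)$ are equivalent, with constants depending on $R,\delta$. It therefore suffices to prove the estimate in $L^2_{\beta^\delta\gamma}(\Gamma^R)$.

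\textbf{Deterministic error operator.} Fix $\lambda\ge\frac18\kappa_1$. By Theorem \ref{lem:operatorL} applied to $a^{R,\delta}_\lambda$, the shifted operators $\check{\mathcal L}^\delta-\lambda I$ and $\check{\mathcal L}^\delta_h-\lambda I$ are sectorial, uniformly in $h$. Set $E_h(t):=e^{t\check{\mathcal L}^\delta}-e^{t\check{\mathcal L}^\delta_h}\mathcal P_h$. I would prove the nonsmooth-data bound
\[
\|E_h(t)\|_{\mathscr L(L^2_{\beta^\delta\gamma})}\le C(R,\delta)\,h^{2}t^{-1}, \qquad t\in(0,T].
\]
Interpolating with the trivial bound $\|E_h(t)\|\le C$ then gives $\|E_h(t)\|\le Ch^{2\vartheta}t^{-\vartheta}$.

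To get the $h^2t^{-1}$ bound I would use the resolvent approach of \cite{JLZ13}. Write $E_h(t)=\frac{1}{2\pi\mathbf i}\int_{\Gamma_{\theta}}e^{zt}\bigl((z-\check{\mathcal L}^\delta)^{-1}-(z-\check{\mathcal L}^\delta_h)^{-1}\mathcal P_h\bigr)\ud z$ along a contour shifted to the right by $\lambda$. It then suffices to show
\[
\|(z-\check{\mathcal L}^\delta)^{-1}f-(z-\check{\mathcal L}^\delta_h)^{-1}\mathcal P_hf\|\le Ch^2\|f\| \quad\text{uniformly for } z \text{ on the contour}.
\]
This is a non-symmetric Galerkin estimate, and I would prove it in four steps.
\begin{enumerate}
\item Galerkin orthogonality for the form $a^{R,\delta}_0(\cdot,\cdot)-z\langle\cdot,\cdot\rangle$.
\item Céa's lemma in the energy norm. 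Coercivity on the contour comes from accretivity of $a_\lambda^{R,\delta}$ together with $|\arg(z-\lambda)|$ being bounded away from the sector; this is exactly the shifting argument needed because $\check{\mathcal L}^\delta$ is only bounded above.
\item The interpolation error in the energy norm, $\|w-I_hw\|_{W^{1,2}}\le Ch\|\check{\mathcal L}^\delta w\|$. This is Lemma \ref{lem:int-L}. It requires $H^2$-regularity of $D(\check{\mathcal L}^\delta)$ on each edge, which follows from expanding $(\alpha^{R,\delta}w')'=\alpha^{R,\delta}w''+(\alpha^{R,\delta})'w'$. Here $\alpha^{R,\delta}\ge\mathfrak c_1(\delta)$, and $(\alpha^{R,\delta})'\in L^4$ by Assumption \ref{asp:alphader}, combined with $w'\in L^\infty$.
\item An Aubin--Nitsche duality argument for the adjoint problem, whose adjoint form is also coercive after shifting, to upgrade the energy-norm estimate to $h^2$ in $L^2$.
\end{enumerate}

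\textbf{Stochastic error.} Write
\[
\check u^\delta(t)-\check u^\delta_h(t)=E_h(t)u_0+\int_0^t\bigl[e^{(t-s)\check{\mathcal L}^\delta}\check B(\check u^\delta)-e^{(t-s)\check{\mathcal L}^\delta_h}\mathcal P_h\check B(\check u^\delta_h)\bigr]\ud s+\text{(stochastic analogue)}.
\]
Split each integrand as $E_h(t-s)\check B(\check u^\delta(s))$ plus $e^{(t-s)\check{\mathcal L}_h^\delta}\mathcal P_h[\check B(\check u^\delta)-\check B(\check u^\delta_h)]$, and similarly for $\check G$. The terms are then bounded as follows.
\begin{itemize}
\item The initial term gives $Ch^{2\vartheta}t^{-\vartheta}\|u_0\|$.
\item The drift term gives $C\int_0^th^{2\vartheta}(t-s)^{-\vartheta}\ud s$, using the moment bound from Lemma \ref{lem:L2} adapted to \eqref{eq:reguarlized}.
\item The Itô isometry gives $C\int_0^th^{4\vartheta}(t-s)^{-2\vartheta}\ud s$. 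This is finite exactly because $\vartheta<\frac12$, which explains the restriction in the statement.
\item The Lipschitz parts, by \eqref{eq:BLip} and the uniform stability of $e^{t\check{\mathcal L}^\delta_h}\mathcal P_h$, give $C\int_0^t\E\|\check u^\delta-\check u^\delta_h\|^2\ud s$.
\end{itemize}
Squaring, taking expectations and applying a Gronwall inequality (singular-kernel version if needed) yields $\E\|\check u^\delta(t)-\check u^\delta_h(t)\|^2\le Ch^{2\vartheta}(1+t^{-\vartheta})$. Since the initial contribution is $Ch^{4\vartheta}t^{-2\vartheta}$, it should be handled with $\vartheta$ halved, or by interpolating $E_h$ as $h^{2\vartheta}t^{-\vartheta/2}$ appropriately so that the stated power $t^{-\vartheta}$ appears.

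The main obstacle is the uniform resolvent estimate for the non-symmetric, only upper-bounded operator. The hard points are coercivity of the shifted sesquilinear form along the whole contour, and proving the interpolation and elliptic regularity on the graph with Kirchhoff conditions. In particular, the Lagrange interpolant must respect continuity at the vertices; this holds because $\mathbb V_h(\Gamma^R)$ contains the vertex hat functions $\zeta_i$.
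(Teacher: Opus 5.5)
Your resolvent-based analysis follows the paper's Lemmas \ref{lem1}--\ref{lem:FEM}: coercivity on the shifted contour, energy-norm interpolation via $H^2$-regularity on each edge, Aubin--Nitsche duality, interpolation of $E_h(t)$, and Gronwall for the stochastic error. Read correctly, your estimates give $\E\|\check u^\delta(t)-\check u^\delta_h(t)\|^2_{\check L^2_\delta}\le Ch^{4\vartheta}(1+t^{-2\vartheta})$, where $\check L^2_\delta=L^2_{\beta^\delta\gamma}(\Gamma^R)$. Your unjustified claim $w'\in L^\infty$ is fine, since $(\alpha^{R,\delta}w')'=2\beta^\delta\check{\mathcal L}^\delta w\in L^2$ and $\alpha^{R,\delta}\ge\mathfrak c_1(\delta)$.

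The gap is the reduction in your first paragraph. $L^2_{\beta\gamma}(\Gamma^R)$ and $L^2_{\beta^\delta\gamma}(\Gamma^R)$ are not equivalent, not even with $\delta$-dependent constants. Assumption \ref{asp:reg} only gives $\beta_k^\delta\le\mathfrak c_5\beta_k$, i.e.\ $\|f\|_{L^2_{\beta^\delta\gamma}}\le C\|f\|_{L^2_{\beta\gamma}}$, which is the wrong direction for you. The reverse would need $\beta_k\le C(\delta)\beta_k^\delta\le C(\delta)\mathfrak c_4(\delta)$. But by \eqref{eq:T}, $\beta_k$ blows up like $|\log|z-H(\mathbf x_i)||$ at every interior (saddle) vertex of $\Gamma^R$, and $\beta^\delta$ is by design a bounded replacement of exactly this singularity. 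Concretely, fix $\phi\in\mathcal C_c^\infty((0,1))$ and set $f_\epsilon(z):=\epsilon^{-1/2}\phi(|z-H(\mathbf x_i)|/\epsilon)$ on one edge at a saddle vertex $O_i$, zero elsewhere. Then $\|f_\epsilon\|_{\check L^2_\delta}\le C(\delta)$, but $\|f_\epsilon\|^2_{L^2_{\beta\gamma}}\ge c|\log\epsilon|$. So your $\check L^2_\delta$ bound does not transfer to the norm in the statement. Your closing suggestion to halve $\vartheta$ so the exponents match is covering exactly this point.

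What the paper uses instead is the interpolation inequality \eqref{eq:L2beta},
\[
\|f\|^2_{L^2_{\beta\gamma}(\Gamma^R)}\le C\|f\|_{\check L^2_\delta}\|f\|_{\check W^{1,2}_\delta}.
\]
On half-edges at exterior vertices $\beta^\delta=\beta$, so there is nothing to prove. On half-edges at interior vertices $\alpha^{R,\delta}=\alpha^R$ is bounded below, so $\|f\|_{L^\infty}\le C\|f\|_{\check W^{1,2}_\delta}$ and $\|f\|_{L^2}\le C\|f\|_{\check L^2_\delta}$ there. Hence $\int|f|^2\beta\gamma\,\ud z\le\|f\|_{L^\infty}\|f\|_{L^2}\|\beta\gamma\|_{L^2}$, and $\|\beta\gamma\|_{L^2}<\infty$ because $\log^2$ is integrable. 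Applying this to $e(t):=\check u^\delta(t)-\check u^\delta_h(t)$ and using Cauchy--Schwarz in $\Omega$ gives
\[
\E\|e(t)\|^2_{L^2_{\beta\gamma}(\Gamma^R)}\le C\bigl(\E\|e(t)\|^2_{\check L^2_\delta}\bigr)^{1/2}\bigl(\E\|\check u^\delta(t)\|^2_{\check W^{1,2}_\delta}+\E\|\check u^\delta_h(t)\|^2_{\check W^{1,2}_\delta}\bigr)^{1/2}.
\]
Your $h^{4\vartheta}(1+t^{-2\vartheta})$ then becomes $h^{2\vartheta}(1+t^{-\vartheta})$; this square root is where the rate in the theorem comes from.

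Closing the argument this way needs an ingredient your proposal lacks: energy-norm moment bounds for $\check u^\delta(t)$ and, uniformly in $h$, for $\check u^\delta_h(t)$. These are the paper's \eqref{eq:checkudelta}--\eqref{eq:checkudeltah}. The paper obtains them from the splitting $\check u^\delta=v^\delta+r^\delta$ in Lemma \ref{lem:uHolder} and \eqref{eq:rW12}. Note that the bound on $v^\delta$ there uses $u_0\in W^{1,2}_{(\alpha+1)\gamma,\beta\gamma}(\Gamma)$, not just $L^2$ initial data.
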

    For sufficiently large $R>H_0$ and small $\delta,h>0$, we define the following regularized truncated finite element approximation
    $$u^{R,\delta}_h(t,z,k):=\begin{cases}
        \check{u}^\delta_h(t,z,k),\quad &\text{if }(z,k)\in \Gamma^R,\\
        u_0(z,k),\quad &\text{if }(z,k)\in \Gamma-\Gamma^R
    \end{cases} $$ 
    for approximating the mild solution $u(t,z,k)$ to the stochastic PDE \eqref{eq:SPDE} on the graph $\Gamma$. Gathering Theorems \ref{tho:truncate}, \ref{tho:con-in-pro}, and \ref{tho:FEM} together, we obtain the following convergence result.
\begin{corollary}\label{tho:mr}
Let Assumptions \ref{asp:aT}, \ref{asp:reg}, \ref{asp:reg-gamma}, and \ref{asp:alphader} hold. Assume that $u_0\in W^{1,2}_{(\alpha+1)\gamma,\beta\gamma}(\Gamma)\cap L^2_{\beta\sqrt\gamma}(\Gamma)$, \eqref{eq:etaR}, and  \eqref{eq:convercond} hold. Then for any $t\in(0,T]$,
\begin{align*}
\lim_{R\to\infty}\lim_{\delta\to0}\lim_{h\to 0}\E\left[\|u(t)-u^{R,\delta}_h(t)\|^2_{L^2_{\beta\gamma}(\Gamma)}\right]=0.
\end{align*}	
\end{corollary}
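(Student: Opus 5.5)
The corollary follows from a triangle-inequality decomposition of the error into the three pieces controlled by Theorems \ref{tho:truncate}, \ref{tho:con-in-pro} and \ref{tho:FEM}, after which we take the limits in the order $h\to0$, $\delta\to0$, $R\to\infty$.

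\textbf{Splitting the error on $\Gamma$.} Fix $t\in(0,T]$ and $R\ge H_0+1$. Recall that $u^R(t,z,m)=u_0(z,m)$ for $z\ge R+1$, so $u^R(t)$ and $u^{R,\delta}_h(t)$ coincide on $\Gamma-\Gamma^R$. Hence
\begin{align*}
\|u(t)-u^{R,\delta}_h(t)\|^2_{L^2_{\beta\gamma}(\Gamma)}
&\le 3\|u(t)-u^R(t)\|^2_{L^2_{\beta\gamma}(\Gamma)}
+3\|\check u(t)-\check u^\delta(t)\|^2_{L^2_{\beta\gamma}(\Gamma^R)}\\
&\quad+3\|\check u^\delta(t)-\check u^\delta_h(t)\|^2_{L^2_{\beta\gamma}(\Gamma^R)},
\end{align*}
where we used $\check u=u^R|_{\Gamma^R}$ and $u^{R,\delta}_h|_{\Gamma^R}=\check u^\delta_h$.

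\textbf{Bounding each term.}
\begin{itemize}
\item By Theorem \ref{tho:FEM}, the expectation of the third term is at most $C(R,\delta,\vartheta,u_0)h^{2\vartheta}(1+t^{-\vartheta})$. Since $t>0$ is fixed, this tends to $0$ as $h\to0$ for each fixed $R$ and $\delta$. The hypotheses of Theorem \ref{tho:FEM} hold because $u_0\in L^2_{\beta\gamma}(\Gamma)$ and $1_\Gamma\in L^2_{\beta\gamma}(\Gamma)$ by \eqref{eq:convercond}.
\item By Theorem \ref{tho:con-in-pro} with $p=2$, the second term vanishes in expectation as $\delta\to0$ for each fixed $R$. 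Its hypotheses are covered by \eqref{eq:etaR}, $u_0\in W^{1,2}_{(\alpha+1)\gamma,\beta\gamma}(\Gamma)$, and Assumptions \ref{asp:reg}--\ref{asp:reg-gamma}.
\item The first term does not depend on $\delta$ or $h$. By Theorem \ref{tho:truncate} it is bounded by $C\sup_{z\ge R-1}\alpha_m(z)\sqrt{\gamma_m(z)}$, which is admissible since $u_0,1_\Gamma\in L^2_{\beta\gamma}\cap L^2_{\beta\sqrt\gamma}$.
\end{itemize}

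\textbf{Taking the iterated limits.} After $h\to0$ and then $\delta\to0$, only the first term survives, so the iterated limit is at most $3C\limsup_{R\to\infty}\sup_{z\ge R-1}\alpha_m(z)\sqrt{\gamma_m(z)}$.

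The only step needing care is showing this last quantity is zero, i.e. converting \eqref{eq:convercond} into this bound. By \eqref{eq:A}, $\alpha_m(z)\sim\mathrm{const}\cdot z$ as $z\to\infty$, so $\alpha_m(z)\le Cz$ for large $z$. Therefore
\[
\sup_{z\ge R-1}\alpha_m(z)\sqrt{\gamma_m(z)}\le C\sup_{z\ge R-1}z\sqrt{\gamma_m(z)},
\]
and the right-hand side tends to $0$ as $R\to\infty$ by \eqref{eq:convercond}. This proves the corollary.
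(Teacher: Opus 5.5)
Your proposal is correct and follows the paper's proof. It uses the same splitting of $u(t)-u^{R,\delta}_h(t)$ into truncation, regularization and finite element errors, with a single three-term bound and constant $3$ where the paper uses $2$ and then $4$, and then applies Theorems~\ref{tho:truncate}, \ref{tho:con-in-pro} and \ref{tho:FEM}; your explicit check that $\sup_{z\ge R-1}\alpha_m(z)\sqrt{\gamma_m(z)}\to 0$ via $\alpha_m(z)\sim \mathrm{const}\cdot z$ is a step the paper leaves implicit, and it is right.
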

\begin{proof}
Since $u^R(t)=u_0$ on $\Gamma-\Gamma^R$ and $\check{u}(t)=u^{R}(t)1_{\Gamma^R}$, we have
\begin{align*}
u(t)-u^{R,\delta}_h(t)&=(u(t)-u^{R}(t))1_{\Gamma-\Gamma^R}+(u(t)-\check{u}^\delta_h(t))1_{\Gamma^R}\\
&=u(t)-u^{R}(t)+(\check{u}(t)-\check{u}^\delta_h(t))1_{\Gamma^R}.
\end{align*}
Therefore, by the H\"older inequality,
\begin{align*}
&\E\left[\|u(t)-u^{R,\delta}_h(t)\|^2_{L^2_{\beta\gamma}(\Gamma)}\right]
\le2\E\left[\|u(t)-u^{R}(t)\|^2_{L^2_{\beta\gamma}(\Gamma)}\right]+2\E\left[\|\check{u}(t)-\check{u}^\delta_h(t)\|^2_{L^2_{\beta\gamma}(\Gamma^R)}\right]\\
&\le2\E\left[\|u(t)-u^{R}(t)\|^2_{L^2_{\beta\gamma}(\Gamma)}\right]+4\E\left[\|\check{u}(t)-\check{u}^\delta(t)\|^2_{L^2_{\beta\gamma}(\Gamma^R)}\right]+4\E\left[\|\check{u}^\delta(t)-\check{u}^\delta_h(t)\|^2_{L^2_{\beta\gamma}(\Gamma^R)}\right].
\end{align*}
Finally, by applying Theorems \ref{tho:truncate}, \ref{tho:con-in-pro}, and \ref{tho:FEM}, we can complete the proof. 
\end{proof}
\section{Error analysis of truncated approximation}\label{S:Trun}
In this section, we carry out the error analysis of the truncated approximation and present the proof of Theorem \ref{tho:truncate}. To this end, we first show that the truncated problem \eqref{eq:uR} admits a unique mild solution that is uniformly bounded in $L^p(\Omega;\mathcal{C}([0,T];L^2_{\beta\gamma}(\Gamma)))$ for $p\ge1$, with bounds independent of the truncation parameter $R$.  
Denote by $B^R$ and $G^R$ the truncations of $B$ and $G$ (see \eqref{eq:BG}), defined respectively, by 
$$B^R(f_1)(z,k):=b(f_1(z,k))\eta_R(z),\quad G^R(f_1)\psi(z,k):=g(f_1(z,k))\eta_R(z)\psi(z,k),\quad (z,k)\in\Gamma,$$
where $f_1\in L^2_{\beta\gamma}(\Gamma)$ and $\psi\in \mathcal{U}_0$.
Then by $|\eta_R|\le 1$, for any $R>H_0$ and any $f_1,f_2\in L^2_{\beta\gamma}(\Gamma)$,
\begin{gather}\label{eq:BRLip}
    \|B^R(f_1)-B^R(f_2)\|_{L^2_{\beta\gamma}(\Gamma)}\le  \|B(f_1)-B(f_2)\|_{L^2_{\beta\gamma}(\Gamma)},\\\label{eq:GRLip}
     \|G^R(f_1)-G^R(f_2)\|_{\mathscr{L}_2(\mathcal{U}_0,L^2_{\beta\gamma}(\Gamma))}\le  \|G(f_1)-G(f_2)\|_{\mathscr{L}_2(\mathcal{U}_0,L^2_{\beta\gamma}(\Gamma))}.
\end{gather}

\begin{lemma}\label{lem:wp-trun}
Let Assumption \ref{asp:aT} hold and $\{1_{\Gamma},u_0\}\subset L^2_{\beta\gamma}(\Gamma)$. 
    Then for any fixed $R> H _0$, the truncated equation \eqref{eq:uR} admits a unique mild solution $u^R=\{u^R(t)\}_{t\in[0,T]}$ satisfying
$$u^R(t)=e^{t\mathcal{L}^R}u_0+\int_0^t e^{(t-s)\mathcal{L}^R}B^R(u^R(s))\ud s+\int_0^t e^{(t-s)\mathcal{L}^R} G^R(u^R(s))\ud W(s).$$
Moreover, for any $p\ge1$, there exists $C:=C(T,p,\kappa)>0$ such that 
\begin{align}\label{eq:L2N}
\sup_{R>H_0}\E\bigg[\sup_{t\in[0,T]}\|u^R(t)\|^p_{L^2_{\beta\gamma}(\Gamma)}\bigg]\le C\Big(1+\|u_0\|_{L^2_{\beta\gamma}(\Gamma)}^p\Big).
\end{align}
\end{lemma}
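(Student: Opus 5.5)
The plan is to repeat the argument behind Lemma \ref{lem:L2} with $\mathcal{L}$ replaced by $\mathcal{L}^R$, keeping every constant uniform in $R$.

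\textbf{Step 1: $\mathcal{L}^R$ generates a semigroup with an $R$-uniform bound.} We verify that $\mathcal{L}^R$ satisfies the hypotheses of Theorem \ref{lem:operatorL}, where $\alpha_k$ is replaced by $\alpha_k^R=\alpha_k\eta_R$. Since $0\le\eta_R\le1$, Assumption \ref{asp:aT} gives
\[
\alpha_k^R|\gamma_k'|^2\le\alpha_k|\gamma_k'|^2\le\kappa\beta_k\gamma_k^2,
\]
so the same constant $\kappa$ works for every $R$. We then run the proof of Theorem \ref{lem:operatorL} with the form
\[
a^R_\lambda(\phi,\psi)=\lambda\langle\phi,\psi\rangle_{L^2_{\beta\gamma}(\Gamma)}+\tfrac12\sum_k\int_{I_k}\alpha_k^R\,\phi'\,(\overline{\psi}\gamma_k)'\,\ud z .
\]
There is one new issue: $\alpha_m^R$ vanishes on $[R+1,\infty)$, so the energy space degenerates there. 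We handle this by taking the form domain to be the closure of $W^{1,2,c}_{\alpha\gamma,\beta\gamma}(\Gamma)$ in the norm $\|\cdot\|_{a^R_\lambda}$. The Young-inequality estimate \eqref{eq:lowphipsi} still shows that, for $\lambda\ge\kappa/8$, the form is accretive, continuous and closed. The associated operator is $-\lambda I+\mathcal{L}^R$ with the same gluing conditions. On $\{z>R+1\}$ this operator acts as zero, so $e^{t\mathcal{L}^R}$ is the identity there, which is consistent with the remark after the statement.

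The conclusion of this step is
\[
\|e^{t\mathcal{L}^R}\|_{\mathscr{L}(L^2_{\beta\gamma}(\Gamma))}\le e^{\kappa t/8},
\]
with the bound independent of $R$.

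\textbf{Step 2: existence and uniqueness of the mild solution.} By \eqref{eq:BRLip}--\eqref{eq:GRLip} and \eqref{eq:BLip}, the maps $B^R$ and $G^R$ are Lipschitz with the same constant as $B$ and $G$. Since $1_\Gamma\in L^2_{\beta\gamma}(\Gamma)$, they also have linear growth with an $R$-independent constant. Applying \cite[Theorem 7.5]{DZ14} then yields a unique mild solution in $L^p(\Omega;\mathcal{C}([0,T];L^2_{\beta\gamma}(\Gamma)))$.

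\textbf{Step 3: the $R$-uniform moment bound \eqref{eq:L2N}.} We write the mild formulation and take $\E\sup_{s\le t}\|\cdot\|^p$ of each term.

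The initial-value term and the drift term are bounded directly by $e^{\kappa T/8}$ and the linear growth of $B^R$.

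The stochastic convolution is the delicate term, because a $\sup_t$ bound requires either a factorization argument or a contraction-type maximal inequality. Since $e^{t(\mathcal{L}^R-\kappa/8)}$ is a contraction semigroup, we apply the maximal inequality for stochastic convolutions driven by contraction semigroups (Kotelenez / Hausenblas--Seidler) to the shifted semigroup, absorbing the factor $e^{\kappa t/8}$. This gives
\[
\E\sup_{s\le t}\Bigl\|\int_0^s e^{(s-r)\mathcal{L}^R}G^R(u^R(r))\,\ud W(r)\Bigr\|^p\le C\,\E\Bigl(\int_0^t\|G^R(u^R(r))\|^2_{\mathscr{L}_2}\,\ud r\Bigr)^{p/2},
\]
where $C$ depends only on $p$, $T$ and $\kappa$.

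As an alternative route, we could apply the It\^o formula to $\|u^R\|^2_{L^2_{\beta\gamma}(\Gamma)}$ exactly as in Lemma \ref{lem:H1}. There the inequality \eqref{eq:lower} holds with $\alpha^R$ in place of $\alpha$, and the Burkholder--Davis--Gundy inequality handles the martingale term. This gives the same bound without a maximal inequality.

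In either route, the linear growth of $G^R$ and Gronwall's lemma give \eqref{eq:L2N} with $C=C(T,p,\kappa)$. The main point to check carefully is that no constant depends on $R$; this is guaranteed by the $R$-independence of $\kappa$ and of the Lipschitz constants established in Steps 1 and 2.
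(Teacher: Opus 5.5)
Your proposal is correct and follows essentially the same route as the paper. You check that \eqref{eq:theta} holds for $\alpha_k^R=\alpha_k\eta_R$ with the same $\kappa$, obtain from Theorem~\ref{lem:operatorL} the $R$-uniform bound $\|e^{t\mathcal{L}^R}\|\le e^{\kappa t/8}$, use that $B^R,G^R$ are Lipschitz with linear growth uniformly in $R$, and conclude as in Lemma~\ref{lem:L2} via \cite[Theorem 7.5]{DZ14}. Your two additions only make rigorous details the paper leaves implicit: closing the form domain to handle $\alpha_m^R\equiv 0$ on $[R+1,\infty)$, and the explicit maximal-inequality (or It\^o/BDG) argument for the $R$-uniform $\E\sup$ bound.
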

\begin{proof}
Under Assumption \ref{asp:aT}, the condition \eqref{eq:theta} holds with $\alpha_k$ replaced by $\alpha_k^R=\alpha_k\cdot\eta_R$. 
Hence, by Theorem \ref{lem:operatorL}, for any $\lambda\ge\frac18\kappa$ and $R>H_0$, the operator $-\lambda I+\mathcal{L}^R$ generates an analytic contraction semigroup  on $L^2_{\beta\gamma}(\Gamma)$. In particular, the semigroup $\{e^{t\mathcal{L}^R}\}_{t\ge0}$ generated by $\mathcal{L}^R$ satisfies 
$$\|e^{t\mathcal{L}^R}\|_{\mathscr{L}(L^2_{\beta\gamma}(\Gamma))}\le e^{\frac18 \kappa  t}\quad \forall~t\ge 0,\quad R> H _0.$$

A combination of \eqref{eq:BRLip}, \eqref{eq:GRLip}, and \eqref{eq:BLip} implies that
    $B^R:L^2_{\beta\gamma}(\Gamma)\to L^2_{\beta\gamma}(\Gamma)$ and $G^R:L^2_{\beta\gamma}(\Gamma)\to\mathscr{L}_2(\mathcal{U}_0,L^2_{\beta\gamma}(\Gamma))$ are Lipschitz continuous with the Lipschitz constants independent on $R$. Moreover, by the assumption $1_\Gamma\in L^2_{\beta\gamma}(\Gamma)$, for any $f_1\in L^2_{\beta\gamma}(\Gamma)$ we have
    \begin{gather*}
    \|B^R(f_1)\|_{L^2_{\beta\gamma}(\Gamma)}
+\|G^R(f_1)\|_{\mathscr{L}_2(\mathcal{U}_0,L^2_{\beta\gamma}(\Gamma))}\le  C(1+   \|f_1\|_{L^2_{\beta\gamma}(\Gamma)}).
\end{gather*}
The remainder of the proof is similar to that of Lemma \ref{lem:L2}, and we omit the details.
\end{proof}

\textbf{Proof of Theorem \ref{tho:truncate}.}
Subtracting \eqref{eq:uR} from \eqref{eq:SPDE}, the truncation error $\mathcal{E}^R:=u-u^R$ satisfies
\begin{equation*}
\ud \mathcal{E}^R(t)=[\mathcal{L}u(t)-\mathcal{L}^Ru^R(t)+B(u(t))-B^R(u^R(t))]\ud t +[G(u(t))-G^R(u^R(t))]\ud W(t)
\end{equation*}
subject to the initial value $\mathcal{E}^R(0)=0$. 
We partition the graph into two regions $\mathcal{D}_1:=\{(z,m)\in I_m, z> R-1\}$ and $\mathcal{D}_2:=
   \cup_{k=1}^{m-1}I_k\cup\{(z,m)\in I_m, z\le R-1\}$, and treat the truncation error separately. 
   
We observe that, under Assumption \ref{asp:aT}, the inequality \eqref{eq:theta} remains valid with $\gamma$ replaced by $\sqrt{\gamma}$.  By the assumption $\{1_\Gamma,u_0\}\subset L_{\beta\sqrt\gamma}^2(\Gamma)$, it follows that \eqref{eq:L2} and \eqref{eq:L2N} hold with $\gamma$ replaced by $\sqrt{\gamma}$; that is, for any $p\ge1$,
\begin{equation}\label{coro:IkTksqrt}
\E\bigg[\sup_{t\in[0,T]}\|u(t)\|^p_{L^2_{\beta\sqrt{\gamma}}(\Gamma)}\bigg]+\sup_{R>  H _0}\E\bigg[\sup_{t\in[0,T]}\|u^R(t)\|^p_{L^2_{\beta\sqrt{\gamma}}(\Gamma)}\bigg]\le C\left(1+\|u_0\|_{L^2_{\beta\sqrt\gamma}(\Gamma)}^p\right).
\end{equation}
As a consequence of \eqref{coro:IkTksqrt}, for any $R\ge H _0+1$,
\begin{align}\label{eq:Einfty}
\E\int_{\mathcal{D}_1} |\mathcal{E}^R(t)|^2\gamma_k\beta_k\ud z&=\E\int_{R-1}^\infty |\mathcal{E}^R(t,z,m)|^2\gamma_m(z)\beta_m(z)\ud z\\\notag
&\le 2\sup_{y\ge R-1}\sqrt{\gamma_m(y)}\left(\E\big[\|u^R(t)\|_{L^2_{\beta\sqrt\gamma}(\Gamma)}^2\big]+\E\big[\|u(t)\|_{L^2_{\beta\sqrt\gamma}(\Gamma)}^2\big]\right)\\\notag
&\le C\sup_{y\ge R-1}\sqrt{\gamma_m(y)}.
\end{align}

To handle the region 
$\mathcal{D}_2$,
we introduce a smooth function $\zeta:\mathbb{R}\to[0,1]$ such that
$\zeta(z)=1$ for $z\le 0$, $\zeta(z)=0$ for $z\ge 1$, and
$\zeta'(0)=\zeta'(1)=0$.
For instance, one can define
$\zeta(z)=1-\frac{1}{C}\int_0^{z} e^{-\frac{1}{t(1-t)}}\ud t$ for $z\in(0,1)$,
with the normalizing constant $C=\int_0^1 e^{-\frac{1}{t(1-t)}}\ud t>0$.
For $R\ge H_0+1$, we define $\zeta_R(z)=\zeta(z-R+1)$ such that
$\zeta_R(z)=1$ for $z\in[0,R-1]$, $\zeta_R(z)=0$ for $z\ge R$, and
$\zeta_R'(R-1)=\zeta_R'(R)=0$. Introduce another graph weight function $\tilde{\gamma}:\Gamma\to [0,\infty)$ by setting $\tilde{\gamma}_k(z)=\gamma_k(z)\cdot\zeta_R(z)$ for $(z,k)\in\Gamma$.
Together with \eqref{eq:Einfty},  this implies that for any $R\ge H _0+1$, 
\begin{align}\label{eq:ERtH}
 \E\left[\|\mathcal{E}^R(t)\|_{L^2_{\beta\gamma}(\Gamma)}^2\right]
 &=\E\int_{\mathcal{D}_1} |\mathcal{E}^R(t)|^2\gamma_k\beta_k\ud z+\E\int_{\mathcal{D}_2} |\mathcal{E}^R(t)|^2\gamma_k\beta_k\ud z
 \\\notag&\le C\sup_{y\ge R-1}\sqrt{\gamma_m(y)}+\E\Big[\|\mathcal{E}^R(t)\|_{L^2_{\beta\tilde{\gamma}}(\Gamma)}^2\Big],
\end{align}
where we have also used the facts $\zeta_R\ge0$ on $\Gamma$ and $\zeta_R=1$ on $\mathcal{D}_2$ in the last step.

Applying the It\^o formula, for any $t\in(0,T]$ we have
\begin{align}\label{eq:EERt-0}\notag
\ud \|\mathcal{E}^R(t)\|_{L^2_{\beta\tilde{\gamma}}(\Gamma)}^2&=-\sum_{k=1}^m\int_{I_k}\alpha_k\partial_zu(t)\partial_z(\mathcal{E}^R(t)\tilde{\gamma}_k)\ud z+\sum_{k=1}^m\int_{I_k}\alpha_{k}^R\partial_zu^R(t)\partial_z(\mathcal{E}^R(t)\tilde{\gamma}_k)\ud z\\\notag
&\quad+2\langle B(u(t))-B^R(u^R(t)),\mathcal{E}^R(t)\rangle_{L^2_{\beta\tilde{\gamma}}(\Gamma)}\ud t\\\notag
&\quad+2\langle[G(u(t))-G^R(u^R(t))]\ud W(t),\mathcal{E}^R(t)\rangle_{L^2_{\beta\tilde{\gamma}}(\Gamma)}\\
&\quad+\|G(u(t))-G^R(u^R(t))\|_{\mathscr{L}_2(\mathcal{U}_0, L^2_{\beta\tilde{\gamma}}(\Gamma))}^2\ud t.
\end{align}
Invoking the linear growth of $g$ and \eqref{eq:e_i}, one has
\begin{align*}
\|G(u(t))-G^R(u(t))\|_{\mathscr{L}_2(\mathcal{U}_0, L^2_{\beta\tilde{\gamma}}(\Gamma))}^2\le C\int_{I_m}|\eta_R-1|^2(1+|u(t)|^2)\beta_m{\tilde\gamma}_m\ud z=:\mathcal{K}_0^R(t).
\end{align*}
This, along with \eqref{eq:GRLip}, implies that  for any $R\ge H _0+1$ and $t\in[0,T]$,
\begin{equation}\label{eq:J2}
\|G(u(t))-G^R(u^R(t))\|_{\mathscr{L}_2(\mathcal{U}_0, L^2_{\beta\tilde{\gamma}}(\Gamma))}^2
\le C \|\mathcal{E}^R(t)\|_{L^2_{\beta\tilde{\gamma}}(\Gamma)}^2+C\mathcal{K}_0^R(t).
\end{equation}
Similarly, by the Lipschitz continuity of $b$, it also holds that
 \begin{equation}\label{eq:J3}
2\langle B(u(t))-B^R(u^R(t)),\mathcal{E}^R(t)\rangle_{L^2_{\beta\tilde{\gamma}}(\Gamma)}
\le C \|\mathcal{E}^R(t)\|_{L^2_{\beta\tilde{\gamma}}(\Gamma)}^2+C\mathcal{K}_0^R(t),\quad t\in[0,T].
\end{equation}
Since $\textup{supp} (\eta_R-1)\subset[R,\infty)$ and $\textup{supp} (\zeta_{R})\subset[0,R]$, we obtain $(\eta_R(z)-1)\tilde\gamma_k(z)= 0$ for all $(z,k)\in \Gamma$, which implies that the function $\mathcal{K}_0^R(t)$ 
 vanishes identically on $\Gamma$. 
In addition, by $\mathcal{E}^R=u-u^R$ and $\tilde{\gamma}=\gamma\zeta_R$, we can recast
\begin{align}\label{eq:JR}\notag
&-\sum_{k=1}^m\int_{I_k}\alpha_k\partial_zu(t)\partial_z(\mathcal{E}^R(t)\tilde{\gamma}_k)\ud z+\sum_{k=1}^m\int_{I_k}\alpha_{k}^R\partial_zu^R(t)\partial_z(\mathcal{E}^R(t)\tilde{\gamma}_k)\ud z\\\notag
&=\sum_{k=1}^m\int_{I_k}\alpha_k(\eta_R-1)\partial_zu(t)\partial_z(\mathcal{E}^R(t)\tilde{\gamma}_k)\ud z-\sum_{k=1}^m\int_{I_k}\alpha_{k}^R\partial_z\mathcal{E}^R(t)\partial_z(\mathcal{E}^R(t)\tilde{\gamma}_k)\ud z\\
&=-\sum_{k=1}^m\int_{I_k}\alpha_{k}^R\partial_z\mathcal{E}^R(t)\partial_z(\mathcal{E}^R(t)\tilde{\gamma}_k)\ud z=\sum_{i=1}^2\mathcal{K}_i^R(t),
\end{align}
where the remainder terms are 
\begin{align*}
\mathcal{K}_1^R(t)&=-\sum_{k=1}^m\int_{I_k}\alpha_{k}^R|\partial_z\mathcal{E}^R(t)|^2{\tilde\gamma}_k\ud z-\sum_{k=1}^m\int_{I_k}\alpha_{k}^R\partial_z\mathcal{E}^R(t)\mathcal{E}^R(t)\gamma_k^\prime\zeta_{R}\ud z,\\
\mathcal{K}_2^R(t)&=-\sum_{k=1}^m\int_{I_k}\alpha_{k}^R\partial_z\mathcal{E}^R(t)\mathcal{E}^R(t)\gamma_k\zeta^\prime_{R}\ud z.
\end{align*}
Inserting the estimates \eqref{eq:J2}, \eqref{eq:J3}, and \eqref{eq:JR} into \eqref{eq:EERt-0}, we conclude that for any $t\in[0,T]$,
\begin{equation}\label{eq:EERt-1}
\E\left[\|\mathcal{E}^R(t)\|_{L^2_{\beta\tilde{\gamma}}(\Gamma)}^2\right]
\le\|\mathcal{E}^R(0)\|_{L^2_{\beta\tilde{\gamma}}(\Gamma)}^2
+\sum_{i=1}^2\E\int_0^t\mathcal{K}_i^R(s)\ud s+C \E\int_0^t\|\mathcal{E}^R(s)\|_{L^2_{\beta\tilde{\gamma}}(\Gamma)}^2\ud s.
\end{equation}
Here we have also used the fact that $\mathcal{K}_0^R(t)$ 
 vanishes identically on $\Gamma$.
Since $\eta_R\equiv 1$ on the support of $\zeta_R$, it follows from Young's inequality and Assumption \ref{asp:aT} that
\begin{align}\label{eq:K12R}
\mathcal K_1^R(t)
&\le -\frac12\sum_{k=1}^m\int_{I_k}\alpha_{k}^R|\partial_z\mathcal{E}^R(t)|^2\gamma_k\zeta_R\ud z+\frac12\sum_{k=1}^m\int_{I_k}\alpha_{k}^R|\mathcal{E}^R(t)|^2|\gamma_k^\prime|^2\zeta_{R}\gamma^{-1}_k\ud z\\\notag
&\le -\frac12\sum_{k=1}^m\int_{I_k}\alpha_{k}^R|\partial_z\mathcal{E}^R(t)|^2\gamma_k\zeta_R\ud z+\frac12\kappa \sum_{k=1}^m\int_{I_k}\eta_R|\mathcal{E}^R(t)|^2\zeta_{R}\beta_k\gamma_k\ud z\\\notag
&= -\frac12\sum_{k=1}^m\int_{I_k}\alpha_{k}|\partial_z\mathcal{E}^R(t)|^2\gamma_k\zeta_R\ud z+\frac12\kappa \sum_{k=1}^{m}\int_{I_k}|\mathcal{E}^R(t)|^2\beta_k\tilde{\gamma}_k\ud z.
\end{align}
To estimate $\mathcal K_{2}^R(t)$, we note that the function $\eta_R\zeta_{R}^\prime=\zeta_{R}^\prime$ is supported on $[R-1,R]$. Combined with the facts that
$\zeta_{R}^\prime(R-1)=\zeta_{R}^\prime(R)=0$ and by applying the integration by parts formula, we obtain
\begin{align}\label{eq:K3R}
\mathcal K_2^R(t)
&=-\frac12\int_{R-1}^{R}\alpha_{m}(z)\partial_z\left(\mathcal{E}^R(t,z,m)^2\right)\gamma_m(z)\zeta^\prime_{R}(z)\ud z\\\notag
&=\frac12\int_{R-1}^{R}\mathcal{E}^R(t,z,m)^2\frac{\ud}{\ud z}\left(\alpha_{m}(z)\gamma_m(z)\zeta^\prime_{R}(z)\right)\ud z.
\end{align}
From the chain rule, Assumption \ref{asp:aT}, and the uniformly boundedness of $\zeta_R^\prime$ and $\zeta_R^{\prime\prime}$, we infer that
\begin{align*}
    \frac{\ud}{\ud z}\left(\alpha_{m}(z)\gamma_m(z)\zeta^\prime_{R}(z)\right)&=  \alpha_{m}^\prime(z)\gamma_m(z)\zeta^\prime_{R}(z)+\alpha_{m}(z)\gamma_m^\prime(z)\zeta^\prime_{R}(z)+   \alpha_{m}(z)\gamma_m(z)\zeta^{\prime\prime}_{R}(z)\\
    &\le C|\alpha_{m}^\prime(z)|\gamma_m(z)+C\sqrt{\alpha_{m}(z)}\sqrt{\beta_m(z)}\gamma_m(z)+C \alpha_{m}(z)\gamma_m(z).
\end{align*}
Referring to \eqref{eq:T}, the function $\beta_m$ is uniformly bounded from below by a positive constant on the edge $I_m$ connecting $O_\infty$. Hence for any $z\in[R-1,R]$ with $R\ge H _0+1$,
\begin{align*}
    \frac{\ud}{\ud z}(\alpha_{m}(z)\gamma_m(z)\zeta^\prime_{R}(z))
    &\le C(|\alpha_{m}^\prime(z)|+\sqrt{\alpha_{m}(z)}+ \alpha_{m}(z))\gamma_m(z)\beta_m(z).
\end{align*}
Moreover, it follows from \eqref{eq:deri-alp} that $|\alpha_m'(z)|$ is uniformly bounded for $z\in [H_0+1,\infty).$ Together with $\alpha_m(z)\sim Cz$ as $z\to\infty$, this implies that for any $z\in[R-1,R]$ with $R\ge H _0+1$, 
\begin{align}\label{eq:alpham}
    \frac{\ud}{\ud z}(\alpha_{m}(z)\gamma_m(z)\zeta^\prime_{R}(z))
    &\le C(1+\alpha_{m}(z))\gamma_m(z)\beta_m(z)\\\notag
    &\le C\sup_{y\ge R-1}\{\alpha_m(y)\sqrt{\gamma_m(y)}\}\sqrt{\gamma_m(z)\beta_m(z)}.
\end{align}
Taking account of \eqref{eq:K3R}, \eqref{eq:alpham}, and \eqref{coro:IkTksqrt}  yields
\begin{align}\label{eq:K3R-new}
\E\left[\mathcal K_{2}^R(t)\right]&\le C\Big(\E\Big[\|u^R(t)\|_{L^2_{\beta\sqrt\gamma}(\Gamma)}^2\Big]+\E\Big[\|u(t)\|_{L^2_{\beta\sqrt\gamma}(\Gamma)}^2\Big]\Big)\sup_{y\ge R-1}\{\alpha_m(y)\sqrt{\gamma_m(y)}\}\\\notag
&\le C\sup_{y\ge R-1}\{\alpha_m(y)\sqrt{\gamma_m(y)}\}.
\end{align}
Plugging \eqref{eq:K12R} and \eqref{eq:K3R-new} into \eqref{eq:EERt-1}, for any $t\in[0,T]$ we have
\begin{align*}
&\E\left[\|\mathcal{E}^R(t)\|_{L^2_{\beta\tilde{\gamma}}(\Gamma)}^2\right]+\frac12\E\int_0^t\sum_{k=1}^m\int_{I_k}\alpha_{k}|\partial_z\mathcal{E}^R(s)|^2\gamma_k\zeta_R\ud z\ud s\\
&\le\|\mathcal{E}^R(0)\|_{L^2_{\beta\tilde{\gamma}}(\Gamma)}^2+C \E\int_0^t\|\mathcal{E}^R(s)\|_{L^2_{\beta\tilde{\gamma}}(\Gamma)}^2\ud s+C\sup_{y\ge R-1}\{\alpha_m(y)\sqrt{\gamma_m(y)}\}.
\end{align*}
Since $\mathcal{E}^R(0)=0$, applying Gronwall's inequality implies that for any $R\ge H _0+1$,
\begin{align*}
\E\left[\|\mathcal{E}^R(t)\|_{L^2_{\beta\tilde{\gamma}}(\Gamma)}^2\right]\le 
C\sup_{y\ge R-1}\{\alpha_m(y)\sqrt{\gamma_m(y)}\}.
\end{align*}
This, along with \eqref{eq:ERtH} and the fact that $\alpha_m$ is bounded from below by a positive constant, gives 
\begin{align*}
 \E\left[\|\mathcal{E}^R(t)\|_{L^2_{\beta\gamma}(\Gamma)}^2\right]
 \le C\sup_{y\ge R-1}\{(\alpha_m(y)+1)\sqrt{\gamma_m(y)}\}\le C\sup_{y\ge R-1}\{\alpha_m(y)\sqrt{\gamma_m(y)}\},
\end{align*}
as required. The proof of Theorem \ref{tho:truncate} is completed.
\hfill$\square$

\begin{remark}
    If we were to directly apply the It\^o formula to 
$
\|\mathcal{E}^R(t)\|_{L^2_{\beta\gamma}(\Gamma)}^2,
$
it would become necessary to estimate the following term:
$
\sum_{k=1}^m \int_{I_k} \alpha_k (\eta_R - 1)  \partial_z u(t)  \partial_z (\mathcal{E}^R(t) \gamma_k)  dz;
$
see \eqref{eq:JR} with $\tilde{\gamma}$ replaced by $\gamma$.
 The challenge of this approach lies in controlling the term where the derivative acts on the error $\mathcal{E}^R(t)$. Specifically, one would need a suitable estimate for:
\begin{equation}\label{eq:partialR}
    -\int_{R}^{R+1}\alpha_m \partial_z u(t)
\partial_z \mathcal{E}^R(t) \gamma_m \mathrm{d} z=-\int_{R}^{R+1}\alpha_m |\partial_z u(t)|^2\gamma_m \mathrm{d} z+\int_{R}^{R+1}\alpha_m \partial_z u(t)\partial_zu^R(t)
\gamma_m \mathrm{d} z.
\end{equation}
However, for the derivative of the truncated solution $u^R$, we only have the following bound:
\begin{align*}
\E\int_0^T\sum_{k=1}^m\int_{I_k}\alpha_k^R(z)|\frac{\partial}{\partial z}u^R(t,z,k)|^2\gamma_k(z)\ud z \ud t\le C.
\end{align*}
This bound is insufficient to control the second term on the right-hand side of \eqref{eq:partialR} because the coefficient $\alpha_m^R(z)$ degenerates near the vertex $O_{n+1}=(R+1,m)$. To overcome this difficulty, in the proof of Theorem \ref{tho:truncate}, we split $\Gamma=\mathcal{D}_1\cap \mathcal{D}_2$. 
In $\mathcal{D}_1$, the rapid decay of the weight $\gamma$ facilitates a straightforward estimate of the tail. In contrast, for $\mathcal{D}_2$, by introducing an additional cut-off function $\zeta_R$, we can localize the error analysis and effectively manage boundary contributions. 
\end{remark}

\section{Convergence of regularized  truncated approximation}\label{S:Reg}
In this section, we prove Theorem \ref{tho:con-in-pro} on the convergence of the regularized  truncated approximation \eqref{eq:reguarlized} for the truncated problem \eqref{eq:local}. We emphasize that the generic constant $C$ in this section may depend on $R$, but is always independent of $\delta$ and $h$. We begin by presenting an example of the regularization function.

\begin{example}\label{ex:reg}
 Let $\delta \in(0, \delta_{\min})$ and $\varrho:[0,\infty)\to[0,1]$ be a smooth function satisfying \eqref{eq:varphi}. Owing to \eqref{eq:beta1} and \eqref{eq:alpha1}, it remains to specify $\alpha_k^{R,\delta}$ near exterior vertices and $\beta_k^\delta$ near interior vertices.
If a subinterval $J_k^{(l)}\subset J_k$ is incident to an exterior vertex $O_i=(H (\mathbf{x}_i),k)$, we set
\begin{align*}
\alpha_k^{R,\delta}(z)
:= \alpha_k^R(z) + \delta\varrho\left(\delta^{-1}| z-H (\mathbf{x}_i)|\right),
\quad (z,k)\in J_k^{(l)}.
\end{align*}
If a subinterval $J_k^{(l)}\subset J_k$ is incident to an interior vertex $O_i=(H (\mathbf{x}_i),k)$, we define
\begin{align*}
\beta_k^\delta(z)
:= \varrho(\delta^{-1}| z-H (\mathbf{x}_i)|)\beta_k(H (\mathbf{x}_i) \pm \delta)
+ \big(1-\varrho(\delta^{-1}| z-H (\mathbf{x}_i)|)\big)\beta_k(z),\quad (z,k)\in J_k^{(l)},
\end{align*}
where the sign $+$ is taken if $H (\mathbf{x}_i)+\delta\in J_k$, and the  sign $-$ is taken if $H (\mathbf{x}_i)-\delta\in J_k$. In Fig.~\ref{fig:reg}, we plot the regularization of the functions $\alpha_k^R(z)=z$ and $\beta_k(z)=|\log|z||$ near $z=0$, where $\varrho(z)=\frac12[\cos((z-1)\pi)+1]$ for $1<z<2$.
\end{example}
\begin{figure}[htbp]
    \centering
    \begin{subfigure}{0.45\textwidth}
        \centering
        \includegraphics[width=\textwidth]{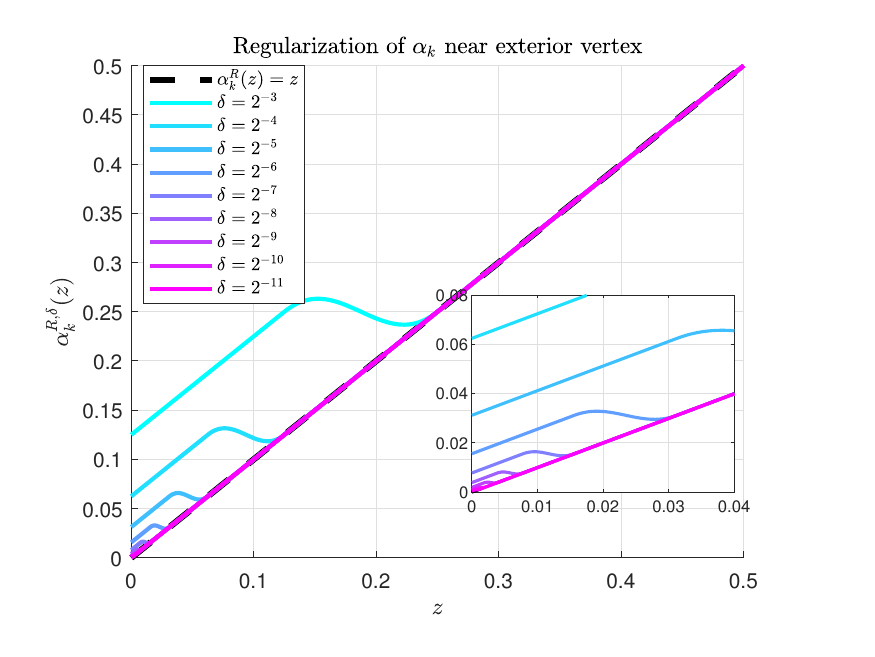}
    \end{subfigure}
    \hfill
    \begin{subfigure}{0.45\textwidth}
        \centering
        \includegraphics[width=\textwidth]{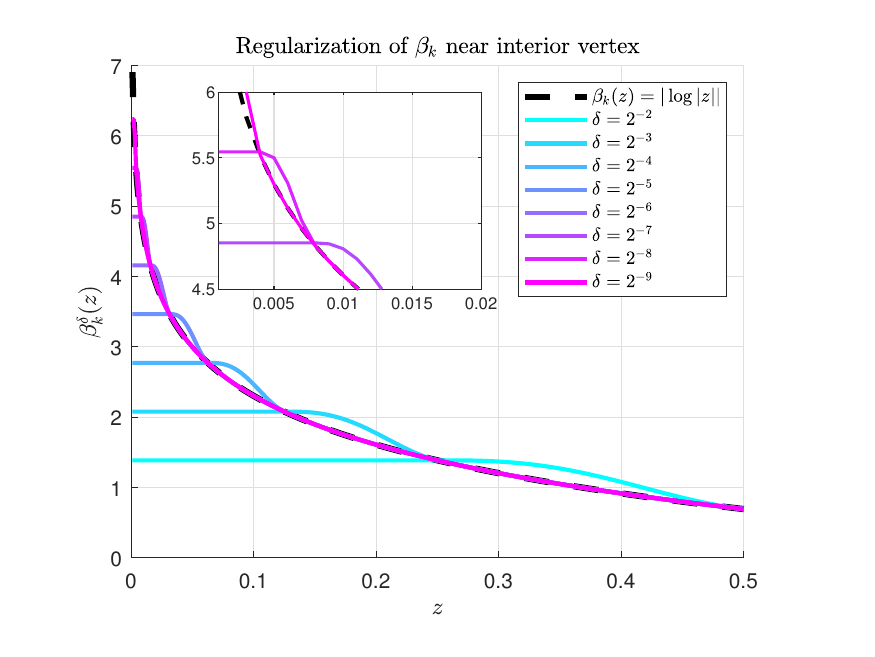}
    \end{subfigure}
    \caption{Regularization function used in Example~\ref{ex:reg}.}
    \label{fig:reg}
\end{figure}

\begin{lemma}
 Assumption~\ref{asp:reg} is satisfied for the regularization function in Example \ref{ex:reg}.
\end{lemma}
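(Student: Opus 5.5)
The verification splits edge by edge into the four local situations determined by the decomposition $J_k=J_k^{(1)}\cup J_k^{(2)}$. On each half-edge either \eqref{eq:beta1}/\eqref{eq:alpha1} leaves the coefficient untouched, or Example~\ref{ex:reg} modifies it on the set $|z-H(\mathbf{x}_i)|\le 2\delta$. Since $\delta<\delta_{\min}$, this set lies inside the half-edge incident to $O_i$, and outside it we have $\varrho\equiv0$, so the coefficients coincide with the original ones there. Each inequality in \eqref{eq:delta1}--\eqref{eq:delta2} can therefore be checked separately near exterior vertices, near interior vertices, and on the middle parts of the edges. On the middle parts, $\alpha^R_k$ and $\beta_k$ are continuous and positive on the compact set $\Gamma^R$, away from the vertices; the exception is near $R+1$, where $\alpha_m^R$ vanishes linearly by \eqref{eq:alphaR}, and that region is handled as an exterior vertex. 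Pointwise convergence is immediate: for fixed $z$ in the interior of $J_k$, $\varrho(\delta^{-1}|z-H(\mathbf{x}_i)|)=0$ once $2\delta<|z-H(\mathbf{x}_i)|$, so both $\alpha_k^{R,\delta}(z)\to\alpha_k^R(z)$ and $\beta_k^\delta(z)\to\beta_k(z)$.

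For $\alpha$, we only need to treat half-edges incident to exterior vertices; on the others $\alpha_k^{R,\delta}=\alpha_k^R$, which is bounded below there by a positive constant because of \eqref{eq:A} at saddles. The lower bound follows with $\mathfrak{c}_0=1$, since $\alpha_k^{R,\delta}\ge\alpha_k^R$. For the $\delta$-dependent bound, take $\mathfrak{c}_1(\delta)=\min\{\delta,\ c_*\delta\}$. For $|z-H(\mathbf{x}_i)|\le\delta$ the added term equals $\delta$. For $|z-H(\mathbf{x}_i)|\ge\delta$, the linear behaviour $\alpha_k^R(z)\ge c_*|z-H(\mathbf{x}_i)|$ from \eqref{eq:A} and \eqref{eq:alphaR} gives $\alpha_k^R\ge c_*\delta$; away from the vertex, $\alpha_k^R$ is bounded below by a constant. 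The upper bound $\alpha_k^R+\delta\le\alpha_k+\delta_{\min}\le \mathfrak{c}_2(\alpha_k+1)$ uses $\eta_R\le1$. One should check that $c_*$ and $\mathfrak{c}_2$ are allowed to depend on $R$ only through $\mathfrak{c}_1(\delta)$ (at $O_{n+1}$ the slope is $\alpha_m(R+1)K_0$), while $\mathfrak{c}_0,\mathfrak{c}_2$ are genuinely $R$-free.

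For $\beta$, we only treat half-edges at interior vertices, since near exterior vertices $\beta_k^\delta=\beta_k$ and $\beta_k$ is bounded above and below by \eqref{eq:T}. Near a saddle, $\beta_k^\delta$ is a convex combination of $\beta_k(H(\mathbf{x}_i)\pm\delta)$ and $\beta_k(z)$. The lower bound $\mathfrak{c}_3=\inf_\Gamma\beta$ is positive because $\beta_k\ge c>0$ everywhere on $\Gamma$ by \eqref{eq:T} and continuity. For the upper bound $\mathfrak{c}_4(\delta)$, note that on the support of $\varrho$ we have $|z-H(\mathbf{x}_i)|\le2\delta$ and $1-\varrho=0$ for $|z-H(\mathbf{x}_i)|\le\delta$. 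Hence $\beta_k^\delta\le\max\{\beta_k(H\pm\delta),\sup_{|z-H|\ge\delta}\beta_k\}\le C|\log\delta|+C$.

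The main obstacle is the $\delta$-uniform bound $\beta_k^\delta\le\mathfrak{c}_5\beta_k$. It suffices to show $\beta_k(H(\mathbf{x}_i)\pm\delta)\le C\beta_k(z)$ whenever $|z-H(\mathbf{x}_i)|\le 2\delta$. I would derive this from the asymptotics \eqref{eq:T}: there exist $r_0$ and constants with $c|\log|s||\le\beta_k(H+s)\le C|\log|s||$ for $|s|\le r_0$. Then $\beta_k(H\pm\delta)\le C|\log\delta|$, while $\beta_k(z)\ge c|\log(2\delta)|\ge \tfrac{c}{2}|\log\delta|$ for small $\delta$, because $|\log|s||$ decreases in $|s|$. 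For $\delta$ not small, that is $\delta\ge r_0/2$, both quantities are bounded above and below by positive constants on compact pieces. This gives $\mathfrak{c}_5$ independent of $\delta$; it also does not depend on $R$, since the saddle vertices and $\beta$ are unaffected by truncation. Setting $\delta_0=\min\{\delta_{\min},r_0/2\}$ then completes the check.
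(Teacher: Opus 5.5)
Your proposal is correct and follows essentially the same route as the paper. The matching ingredients are $\mathfrak{c}_0=1$; a $\delta$-dependent $\mathfrak{c}_1(\delta)$ built from the added $\delta\varrho$ term near exterior vertices and a lower bound on $\alpha_k^R$ away from them; $\mathfrak{c}_4(\delta)$ of order $|\log\delta|$; and the $\delta$-uniform bound $\beta_k^\delta\le\mathfrak{c}_5\beta_k$ taken from the two-sided logarithmic asymptotics \eqref{eq:T}, with $\delta_0$ chosen inside the asymptotic regime. The paper splits into $|z-H(\mathbf{x}_i)|\le\delta$ and $\delta<|z-H(\mathbf{x}_i)|<2\delta$, and your single comparison on $|z-H(\mathbf{x}_i)|\le 2\delta$ covers both at once, provided you take $r_0\le\frac12$ so that $\delta\le\frac14$ and hence $|\log(2\delta)|\ge\frac12|\log\delta|$. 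Your treatment of the new vertex $O_{n+1}$ via the slope $\alpha_m(R+1)K_0$ of $\alpha_m^R$, and of $\mathfrak{c}_3=\inf_\Gamma\beta$ via the convex-combination structure, is if anything slightly more careful than the paper's, whose $C(\delta)$ is written with $\alpha_k$ rather than $\alpha_k^R$.
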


\begin{proof}
For \eqref{eq:delta1}, we can take $\mathfrak{c}_0=\mathfrak{c}_2=1$, and $\mathfrak{c}_1(\delta)=\min\{\delta,C_1(\delta))$, where
\begin{align*}
C(\delta)=\inf\{
\alpha_k(z);(z,k)\in \Gamma, |z-H (\mathbf{x}_i)|\ge \delta \text{ for all exterior vertices }O_i=(H (\mathbf{x}_i),k)\}.
\end{align*}
Next, by \eqref{eq:T}, for each interior vertex $O_i=(H (\mathbf{x}_i),k)$,
there exists a $\delta_i\in(0,\delta_{\min}\wedge 1)$ and two positive constants $K_1(i)$ and $K_2(i)$ such that
\[
- K_1(i)\log|z-H (\mathbf{x}_i)|
\le \beta_k(z)
\le - K_2(i)\log|z-H (\mathbf{x}_i)|
\quad\forall~
|z-H (\mathbf{x}_i)|\le \delta_i .
\]
Set
$
\delta_{0}:=\frac14\min_{O_i\in\mathcal{O}_{\mathrm{int}}}\delta_i,
$
$K_1:=\min_{O_i\in\mathcal{O}_{\mathrm{int}}}K_1(i),
$ and
$K_2:=\max_{O_i\in\mathcal{O}_{\mathrm{int}}}K_2(i)$, and let $\delta\in(0,\delta_{0})\subset(0,\frac14)$.  
For any $(z,k)\in I_k$ satisfying $\delta<|z-H (\mathbf{x}_i)|<2\delta$ for some interior vertex $O_i=(H (\mathbf{x}_i),k)$, we have
$$
- K_1\log(2\delta)
\le \beta_k^\delta(z)
\le - K_2\log\delta\le -\frac{\log 4}{\log 2}K_2\log(2\delta)
\le \frac{\log 4}{\log 2}\cdot\frac{K_2}{K_1}\beta_k(z).
$$
Similarly, if $|z-H (\mathbf{x}_i)|\le\delta$, then
$$
- K_1\log\delta
< \beta_k^\delta(z)
\le - K_2\log\delta\le - K_2\log|z-H (\mathbf{x}_i)|
\le \frac{K_2}{K_1}\beta_k(z).
$$
Consequently, condition \eqref{eq:delta2} holds with
$
\mathfrak{c}_3 = \min\{K_1\log 2,\inf_{(z,k)\in\Gamma}\beta_k(z)\}$, $\mathfrak{c}_5 = \frac{\log 4}{\log 2}\cdot \frac{K_2}{K_1}$, and $\mathfrak{c}_4(\delta) =\max\{-K_2\log\delta, C_2(\delta)\}$, where
$$C_2(\delta)=\sup\{\beta_k(z);(z,k)\in\Gamma,|z-H (\mathbf{x}_i)|\ge 2\delta \text{ for all interior vertices }O_i=(H(\mathbf{x}_i),k)\}.$$
The proof is completed.
\end{proof}
Due to \eqref{eq:deri-alpR}, the regularization function $\alpha^{R,\delta}$ in Example \ref{ex:reg} also satisfies Assumption \ref{asp:alphader}.
\subsection{Tightness}
We first establish an energy estimate for \eqref{eq:reguarlized}, which allows us to derive uniform bounds for the regularized solutions $\check{u}^\delta.$ These bounds, in turn, enable us to prove the convergence of $\check{u}^\delta$ by means of a compactness argument.
\begin{lemma}\label{lem:analydelta}
Let Assumptions \ref{asp:reg}--\ref{asp:reg-gamma} hold, $R> H_0$, and $\delta\in(0,\delta_0)$. Then we have the following.
\begin{enumerate}
\item[(1)] For any $\lambda\ge\frac18\kappa_1$, $-\lambda I+\check{\mathcal{L}}^\delta$ generates an analytic contraction semigroup on $L^2_{\beta^\delta\gamma}(\Gamma^R)$, where the domain $D(\check{\mathcal{L}}^{\delta})$ of $\check{\mathcal{L}}^{\delta}$ is given by
\begin{align}\label{eq:DLdel}\notag
D(\check{\mathcal{L}}^{\delta}):=\{f\in W^{1,2}_{\mathrm{cont}}(\Gamma^R): f \text{ satisfies }
\eqref{eq:gluedelta}, & ~\alpha_k^{R,\delta}\tfrac{\ud f}{\ud z}
\text{ is differentiable on each } J_k,\\
&\qquad\qquad\quad \text{ and }
\check{\mathcal{L}}^{\delta} f \in L^2_{\beta^\delta\gamma}(\Gamma^R)\}.
\end{align}

\item[(2)] If $\{1_{\Gamma},u_0\}\subset L^2_{\beta\gamma}(\Gamma)$, then the regularized truncated equation \eqref{eq:reguarlized} admits a unique mild solution $\check{u}^\delta=\{\check{u}^\delta(t)\}_{t\in[0,T]}$. Moreover, there exists $C:=C(\kappa_1,T)>0$ such that for any $R>H_0$ and $\delta\in(0,\delta_0)$,
\begin{equation}\label{eq:L2delta}
	\E\bigg[\sup_{t\in[0,T]}\|\check{u}^\delta(t)\|^2_{L^2_{\beta^\delta\gamma}(\Gamma^R)}\bigg]+\E\int_0^T\sum_{k=1}^m\int_{I_k}\alpha_k^{R,\delta}|\frac{\partial}{\partial z}\check{u}^\delta(t)|^2\gamma_k\ud z \ud t\le C\Big(1+\|u_0\|_{L^2_{\beta\gamma}(\Gamma)}^2\Big).
\end{equation}
\end{enumerate}
\end{lemma}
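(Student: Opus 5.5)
Part (1) follows by repeating the proof of Theorem \ref{lem:operatorL} on the compact graph $\Gamma^R$, with $\alpha_k$, $\beta_k$ and $\kappa$ replaced by $\alpha_k^{R,\delta}$, $\beta_k^\delta$ and $\kappa_1$.

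\emph{Step 1: the Hilbert space.} Consider $W^{1,2}_{\mathrm{cont}}(\Gamma^R)$ with the form $a^{R,\delta}_\lambda$ from \eqref{eq:app}. By Assumption \ref{asp:reg}, $\alpha_k^{R,\delta}\ge\mathfrak{c}_1(\delta)>0$ and $\mathfrak{c}_3\le\beta_k^\delta\le\mathfrak{c}_4(\delta)$. Since $\gamma$ is bounded above and below on the compact graph $\Gamma^R$, this space coincides with the unweighted $W^{1,2}(\Gamma^R)$ with continuity at interior vertices, up to equivalent norms. Closedness of the continuity constraint then follows directly from the embedding $W^{1,2}\hookrightarrow\mathcal{C}$ on each edge. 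This is even simpler than in Theorem \ref{lem:operatorL}, because no degenerate weights remain.

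\emph{Step 2: properties of the form.} We first bound the cross term $\int\alpha^{R,\delta}_k\phi'\bar\psi\gamma'_k$ using Young's inequality and Assumption \ref{asp:reg-gamma}. This gives \eqref{eq:lower} with $\kappa_1$ in place of $\kappa$. For $\lambda\ge\kappa_1/8$ it follows that the form is accretive, continuous and closed, and it is densely defined since $\oplus_k\mathcal{C}^\infty_c(\mathring J_k)$ is dense.

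\emph{Step 3: identifying the operator.} We show that the operator associated with the form is $-\lambda I+\check{\mathcal L}^\delta$ with domain \eqref{eq:DLdel}. Integrating by parts, the boundary terms at exterior vertices do not vanish automatically, since $\alpha^{R,\delta}_k\neq0$ there. Instead they yield natural Neumann-type conditions, which are exactly those encoded in \eqref{eq:gluedelta} for an exterior vertex, where the sum has a single term. At interior vertices the Kirchhoff condition comes out exactly as in Claim 2. No density argument with compactly supported functions is needed, because $\Gamma^R$ is compact. Then \cite[Theorems 1.51, 1.52]{OE05} give the analytic contraction semigroup.

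For part (2) we argue as in Lemma \ref{lem:L2}.

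\emph{Step 4: well-posedness.} The maps $\check B$ and $\check G$ are Lipschitz with linear growth on $L^2_{\beta^\delta\gamma}(\Gamma^R)$, with constants independent of $\delta$ and $R$. This uses $|\eta_R|\le1$ and \eqref{eq:e_i}, together with $1_{\Gamma^R}\in L^2_{\beta^\delta\gamma}$, which holds because $\beta^\delta\le\mathfrak{c}_5\beta$. Together with $\|e^{t\check{\mathcal L}^\delta}\|\le e^{\kappa_1t/8}$, \cite[Theorem 7.5]{DZ14} yields a unique mild solution.

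\emph{Step 5: the $\delta$-uniform estimate \eqref{eq:L2delta}.} Apply It\^o's formula (\cite[Theorem 4.2.5]{LR15}) to $\|\check u^\delta(t)\|^2_{L^2_{\beta^\delta\gamma}(\Gamma^R)}$, viewing $\check{\mathcal L}^\delta$ variationally via $-a_0^{R,\delta}$. Use \eqref{eq:lower} with $\epsilon=1$ to produce $-\tfrac14\int\alpha^{R,\delta}_k|\partial_z\check u^\delta|^2\gamma_k$ plus $\tfrac{\kappa_1}4\|\check u^\delta\|^2$, and bound the $B$ and $G$ terms by $C(1+\|\check u^\delta\|^2)$. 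Treat the supremum over time with the Burkholder--Davis--Gundy inequality: the martingale term is bounded by $\tfrac12\E\sup_t\|\check u^\delta\|^2+C\E\int_0^T\|G\|^2_{\mathscr L_2}$. Gronwall's inequality then closes the estimate.

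For the initial data, $\|\check u^\delta(0)\|_{L^2_{\beta^\delta\gamma}}\le\mathfrak{c}_5^{1/2}\|u_0\|_{L^2_{\beta\gamma}}$. All constants depend only on $\kappa_1$, $T$, the Lipschitz constants, $\mu(\mathbb R^2)$, $\mathfrak{c}_5$ and $\|1_\Gamma\|_{L^2_{\beta\gamma}}$, hence not on $R$ or $\delta$.

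\emph{Main obstacle.} The delicate point is this uniformity in Step 5. One must avoid the $\delta$-dependent constants $\mathfrak{c}_1(\delta)$ and $\mathfrak{c}_4(\delta)$, using only $\kappa_1$ and the upper bound $\beta^\delta\le\mathfrak{c}_5\beta$. A secondary point is justifying It\^o's formula in the Gelfand triple $W^{1,2}_{\mathrm{cont}}\subset L^2_{\beta^\delta\gamma}\subset(W^{1,2}_{\mathrm{cont}})^*$ for each fixed $\delta$. This is routine here because the operator is uniformly elliptic for fixed $\delta$.
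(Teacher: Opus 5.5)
Your proposal is correct and follows essentially the same route as the paper. Part (1) uses the form method of Theorem \ref{lem:operatorL} with $(\alpha_k^{R,\delta},\beta_k^\delta,\kappa_1)$ in place of $(\alpha_k,\beta_k,\kappa)$, and part (2) repeats the arguments of Lemmas \ref{lem:L2} and \ref{lem:H1}, using $\beta_k^\delta\le\mathfrak{c}_5\beta_k$ to control the initial data and $1_{\Gamma^R}$. Your points that the exterior-vertex boundary terms now yield the Neumann condition contained in \eqref{eq:gluedelta}, and that no compact-support density argument is needed on $\Gamma^R$, are accurate details the paper leaves implicit. The only deviation is minor: you get the supremum bound via It\^o plus Burkholder--Davis--Gundy rather than from \cite[Theorem 7.5]{DZ14}, and this is equally uniform in $\delta$.
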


\begin{proof}
	(1) Fix $\delta\in(0,\delta_0)$ and $\lambda\ge\frac18\kappa_1$. Recall that the bilinear form $a_{\lambda}^{R,\delta}: W^{1,2}_{\mathrm{cont}}(\Gamma^R)\times W^{1,2}_{\mathrm{cont}}(\Gamma^R)\to \C$ defined in \eqref{eq:app}  is densely defined, continuous, closed, and accretive. Specifically, for any $\lambda > \tfrac{1}{8}\kappa_1$ (see \eqref{eq:lower} and \eqref{eq:aconti} for a similar argument),
\begin{gather}\label{eq:coe-delta}
\Re a_{\lambda}^{R,\delta}(\phi,\phi)\ge c(\lambda)\|\phi\|^2_{W^{1,2}_{\alpha^{R,\delta}\gamma,\beta^\delta\gamma}(\Gamma^R)},\\\label{eq:cont-delta}
 |a_{\lambda}^{R,\delta}(\phi,\psi)|\le C(\lambda)\|\phi\|_{W^{1,2}_{\alpha^{R,\delta}\gamma,\beta^\delta\gamma}(\Gamma^R)}\|\psi\|_{W^{1,2}_{\alpha^{R,\delta}\gamma,\beta^\delta\gamma}(\Gamma^R)}.
\end{gather}
 Moreover, the operator associated with the form $a_\lambda^{R,\delta}$ is $-\lambda I + \check{\mathcal{L}}^{\delta}$, where
$
D(\check{\mathcal{L}}^{\delta})
$
is given by \eqref{eq:DLdel}. As a consequence of \cite[Theorem 1.51]{OE05}, we obtain the first claim.

(2)
From the upper bound $\beta_k^\delta\le \mathfrak{c}_5\beta_k$ in \eqref{eq:delta2}, we infer that
$$\|\check{u}^\delta(0)\|_{L^2_{\beta^\delta\gamma}(\Gamma^R)}^2=\sum_{k=1}^{m}\int_{J_k}|\check{u}(0)|^2\beta_k^\delta\gamma_k\ud z\le C\sum_{k=1}^{m}\int_{I_k}|u_0|^2\beta_k\gamma_k\ud z\le C\Big(1+ \|u_0\|_{L^2_{\beta\gamma}(\Gamma)}^2\Big).$$
Finally, by the Lipschitz continuity of $b$ and $g$,
\eqref{eq:L2delta} can be proved in the same way as in
Lemmas~\ref{lem:L2} and~\ref{lem:H1}, and we omit the details.
\end{proof}

We are now state the smoothing properties of the regularized operator $\check{\mathcal{L}}^\delta$.
\begin{lemma}\label{lem:smo}
	Let Assumptions \ref{asp:reg}--\ref{asp:reg-gamma} hold and $\lambda>\frac18\kappa_1$. Then for any $R>H_0$ and $\delta\in(0,\delta_0)$, the operator
 $\mathcal{A}^\delta_\lambda :=\lambda I-\check{\mathcal{L}}^\delta$ satisfies
	\begin{gather}\label{eq:smo1}
		\|(\mathcal{A}^\delta_\lambda )^{\vartheta } e^{-t\mathcal{A}^\delta_\lambda }\|_{\mathscr{L}(L^2_{\beta^\delta\gamma}(\Gamma^R))}\le C(\lambda,\vartheta ) t^{-\vartheta },\quad \vartheta >0,\\\label{eq:smo2}
		\|(\mathcal{A}^\delta_\lambda )^{-\vartheta }(e^{-t\mathcal{A}^\delta_\lambda }-I)\|_{\mathscr{L}(L^2_{\beta^\delta\gamma}(\Gamma^R))}\le C(\lambda,\vartheta )t^{\vartheta },\quad \vartheta \in (0,1).
	\end{gather}
\end{lemma}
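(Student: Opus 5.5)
The plan is to deduce both estimates from the standard theory of analytic semigroups generated by sectorial operators, applied to $\mathcal{A}^\delta_\lambda$ on $H:=L^2_{\beta^\delta\gamma}(\Gamma^R)$. The key requirement is that the constants depend only on $\lambda$ and $\vartheta$, not on $R$ or $\delta$. I would first fix $\lambda>\frac18\kappa_1$ and observe that $\mathcal{A}^\delta_\lambda$ is the operator associated with the form $a^{R,\delta}_\lambda$. By the argument of Claim 1 in Theorem \ref{lem:operatorL}, applied with Assumption \ref{asp:reg-gamma}, we get for all $\phi$ in the form domain
\begin{equation*}
\Re a^{R,\delta}_\lambda(\phi,\phi)\ge c_0(\lambda)\|\phi\|^2_{W^{1,2}_{\alpha^{R,\delta}\gamma,\beta^\delta\gamma}(\Gamma^R)},\qquad |\Im a^{R,\delta}_\lambda(\phi,\phi)|\le \mathcal{M}(\lambda,\kappa_1)\,\Re a^{R,\delta}_\lambda(\phi,\phi).
\end{equation*}
The sector bound comes from \eqref{eq:lowphipsi}. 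The imaginary part only involves the cross term $\int\alpha^{R,\delta}_k\phi'\overline{\phi}\gamma_k'$, which Young's inequality and Assumption \ref{asp:reg-gamma} bound by the real part with constants built only from $\kappa_1$ and $\lambda$. Because $\lambda$ is strictly larger than $\frac18\kappa_1$, the form is strictly coercive, so $\Re a^{R,\delta}_\lambda(\phi,\phi)\ge c_1(\lambda)\|\phi\|_H^2$ with $c_1(\lambda)>0$.

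It follows that the numerical range of $\mathcal{A}^\delta_\lambda$ lies in the sector $\{\mu:|\arg\mu|\le\arctan\mathcal{M}\}$ intersected with $\{\Re\mu\ge c_1\}$. Outside this set we then have the resolvent bound $\|(\mu-\mathcal{A}^\delta_\lambda)^{-1}\|_{\mathscr{L}(H)}\le 1/\mathrm{dist}(\mu,\text{numerical range})$. Thus $\mathcal{A}^\delta_\lambda$ is sectorial of angle $\omega<\pi/2$, with $0\in\rho(\mathcal{A}^\delta_\lambda)$ and $\|(\mathcal{A}^\delta_\lambda)^{-1}\|\le c_1^{-1}$, and all of this is uniform in $R$ and $\delta$. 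Since $0\in\rho(\mathcal{A}^\delta_\lambda)$, the fractional powers $(\mathcal{A}^\delta_\lambda)^{\pm\vartheta}$ are well defined through the Dunford/Balakrishnan integrals.

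For \eqref{eq:smo1}, I would write
\begin{equation*}
(\mathcal{A}^\delta_\lambda)^{\vartheta}e^{-t\mathcal{A}^\delta_\lambda}=\frac{1}{2\pi \mathbf{i}}\int_{\Sigma}\mu^{\vartheta}e^{-t\mu}(\mu-\mathcal{A}^\delta_\lambda)^{-1}\ud\mu
\end{equation*}
over a contour $\Sigma$ surrounding the sector. Substituting $\mu=s/t$ and using the resolvent bound gives $Ct^{-\vartheta}$. Alternatively, one can cite the standard result (Pazy, Theorem 2.6.13) whose constant depends only on the sector angle and the resolvent bound. For \eqref{eq:smo2}, I would use
\begin{equation*}
(\mathcal{A}^\delta_\lambda)^{-\vartheta}(e^{-t\mathcal{A}^\delta_\lambda}-I)=-\int_0^t(\mathcal{A}^\delta_\lambda)^{1-\vartheta}e^{-s\mathcal{A}^\delta_\lambda}\ud s.
\end{equation*}
Applying \eqref{eq:smo1} with exponent $1-\vartheta\in(0,1)$ bounds this by $C\int_0^t s^{\vartheta-1}\ud s=C\vartheta^{-1}t^{\vartheta}$.

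The main obstacle is the uniformity of the constants in $R$ and $\delta$. The operator is not self-adjoint, so spectral calculus is not available, and we have to check that the sector angle and the coercivity constant come only from Assumption \ref{asp:reg-gamma} and the choice of $\lambda$. I would verify this carefully from \eqref{eq:lowphipsi}, replacing $\kappa$ by $\kappa_1$ and $\alpha,\beta$ by $\alpha^{R,\delta},\beta^\delta$. One also needs the semigroup bound for all $t>0$, not merely small $t$; strict coercivity, which gives exponential decay of $e^{-t\mathcal{A}^\delta_\lambda}$, ensures this.
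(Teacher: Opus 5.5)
Your proposal is correct and follows essentially the same route as the paper: the form bounds \eqref{eq:coe-delta}--\eqref{eq:cont-delta} put the numerical range of $\mathcal{A}^\delta_\lambda$ in a sector of half-angle $<\pi/2$, which gives a resolvent bound of order $|y|^{-1}$ via the distance to the numerical range. From there, \eqref{eq:smo1} follows from the Dunford contour integral, and \eqref{eq:smo2} from $(I-e^{-t\mathcal{A}^\delta_\lambda})\phi=\int_0^t\mathcal{A}^\delta_\lambda e^{-s\mathcal{A}^\delta_\lambda}\phi\,\ud s$ combined with \eqref{eq:smo1} at exponent $1-\vartheta$. Your remarks on strict coercivity (so that $0\in\rho(\mathcal{A}^\delta_\lambda)$ and a contour through the origin is legitimate) and on the constants being independent of $R$ and $\delta$ spell out what the paper uses only implicitly.
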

\begin{proof}
For a fixed  $\lambda > \tfrac{1}{8}\kappa_1$,
by \eqref{eq:coe-delta} and \eqref{eq:cont-delta}, we obtain   
\begin{align*}
	\big|\langle \mathcal{A}^\delta_\lambda \phi,\phi\rangle_{L^2_{\beta^\delta\gamma}(\Gamma^R)}\big|\le \frac{C(\lambda)}{c(\lambda)}\Re \langle \mathcal{A}^\delta_\lambda \phi,\phi\rangle_{L^2_{\beta^\delta\gamma}(\Gamma^R)},\quad \phi\in D(\mathcal{A}^\delta_\lambda)=D(\check{\mathcal{L}}^\delta).
\end{align*}
Therefore, the numerical range $\mathcal{N}(\mathcal{A}^\delta_\lambda )$ of $\mathcal{A}^\delta_\lambda $ (see \cite[Definition 1.26]{OE05}), defined by
$$\mathcal{N}(\mathcal{A}^\delta_\lambda ):=\Big\{\langle \mathcal{A}^\delta_\lambda \phi,\phi\rangle_{L^2_{\beta^\delta\gamma}(\Gamma^R)}:\phi\in D(\mathcal{A}^\delta_\lambda ), \|\phi\|_{L^2_{\beta^\delta\gamma}(\Gamma^R)}= 1 \Big\},
$$
is a subset of the sector $\Sigma_0(\theta_0):=\{y\in\mathbb{C},y\neq 0,0\le | \arg y|\le \theta_0\}$ where $\theta_0=\arccos(c(\lambda)/C(\lambda))\in(0,\frac{\pi}{2})$. Fix $\theta_1\in (\theta_0,\frac{\pi}{2})$ and denote $\Sigma_\pi(\theta_1):=\{y\in\mathbb{C},\theta_1\le | \arg y|\le \pi\}$. Then by \cite[Proposition C.3.1]{HM06}, the resolvent set $\rho(\mathcal{A}^\delta_\lambda )$ of $\mathcal{A}^\delta_\lambda $ contains $\Sigma_\pi(\theta_1)$, and for any $y\in \Sigma_\pi(\theta_1)$,
\begin{equation}\label{eq:analytic}
	\|(y I-\mathcal{A}^\delta_\lambda )^{-1}\|_{\mathscr{L}(L^2_{\beta^\delta\gamma}(\Gamma^R))}\le \frac{1}{\textup{dist}(y,\overline{\mathcal{N}(\mathcal{A}^\delta_\lambda )})}\le\frac{1}{\textup{dist}(y,\Sigma_0(\theta_0))}\le \frac{1}{\sin(\theta_1-\theta_0)}\frac{1}{|y|}.
\end{equation}
If we introduce the contour $\mathcal{Y}=\{y:y=r e^{\pm \mathbf{i}\theta_1},r\ge0\}$, then it follows from \eqref{eq:analytic} that for any  $\vartheta >0$,
\begin{align*}
	\|(\mathcal{A}^\delta_\lambda )^{\vartheta } e^{-t\mathcal{A}^\delta_\lambda }\phi\|_{L^2_{\beta^\delta\gamma}(\Gamma^R)}&=\left\|\frac{1}{2\pi \mathbf{i}}\int_{\mathcal{Y}} y^\vartheta  e^{-yt}(yI-\mathcal{A}^\delta_\lambda )^{-1}\phi\ud  y\right\|_{L^2_{\beta^\delta\gamma}(\Gamma^R)}\\\notag
	&\le C\|\phi\|_{L^2_{\beta^\delta\gamma}(\Gamma^R)}\int_0^\infty r^{\vartheta -1} e^{-r t\cos\theta_1}\ud r\le C t^{-\vartheta }\|\phi\|_{L^2_{\beta^\delta\gamma}(\Gamma^R)},
\end{align*}
where $C:=C(\vartheta,\lambda)>0$ is independent of $\phi\in L^2_{\beta^\delta\gamma}(\Gamma^R)$. 
This completes the proof of \eqref{eq:smo1}. Finally, \eqref{eq:smo2} comes from 
$(I-e^{-t\mathcal{A}^\delta_\lambda })\phi=\int_0^t\mathcal{A}^\delta_\lambda e^{-s\mathcal{A}^\delta_\lambda }\phi\ud s$ and an application of \eqref{eq:smo1}.
\end{proof}
To facilitate the a-priori estimate of $\check{u}^\delta$, we 
 introduce the following random PDE
\begin{align}\label{eq:vr}
	\partial_tv^\delta(t)=\check{\mathcal{L}}^\delta v^\delta(t)+\check{B}(\check{u}^\delta(t)),\quad  v^\delta(0)=\check{u}(0),
\end{align}
for any $\delta\in(0,\delta_0)$. Then $r^\delta:=\check{u}^\delta-v^\delta$ is the stochastic convolution given by 
\begin{align*}
r^\delta(t)=\int_0^t e^{(t-s)\check{\mathcal{L}}^\delta}\check{G}(\check{u}^\delta(s))\ud W(s),\quad t\in[0,T].
\end{align*}

\begin{lemma}\label{lem:uHolder}
	Let Assumptions \ref{asp:reg}--\ref{asp:reg-gamma} hold, $1_\Gamma\in L^2_{\beta\gamma}(\Gamma)$, and $u_0\in W^{1,2}_{(\alpha+1)\gamma,\beta\gamma}(\Gamma)$. 	
	Then there exists a constant $C>0$ such that for any $R>H_0$ and $\delta\in(0,\delta_0)$,
			\begin{align}\label{eq:vunif}
		\sup_{t\in[0,T]}\|v^\delta(t)\|_{W^{1,2}_{\alpha^{R}\gamma,\beta\gamma}(\Gamma^R)}+ \int_0^T\|\partial_sv^\delta(s)\|_{L^2_{\gamma}(\Gamma^R)}^2 \ud s\le C.
	\end{align}
\end{lemma}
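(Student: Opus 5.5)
Since $v^\delta$ solves the random (pathwise) parabolic equation \eqref{eq:vr} with forcing $f^\delta(t):=\check B(\check u^\delta(t))$, the plan is a deterministic energy estimate obtained by testing with $\partial_t v^\delta$, carried out $\omega$-wise. The right-hand side of \eqref{eq:vunif} is deterministic, so we use the pathwise linear growth $\|f^\delta(t)\|\le C(1+\|\check u^\delta(t)\|)$. This requires either a pathwise control of $\check u^\delta$ or the reading that the bound holds in expectation; I would prove it in the form $\E[\cdot]\le C$ via \eqref{eq:L2delta} if needed.

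The natural test function is $\partial_t(v^\delta\gamma)=\gamma\,\partial_t v^\delta$, because the form $a_0^{R,\delta}(\phi,\psi)$ contains $\tfrac{\ud}{\ud z}(\bar\psi\gamma_k)$. Pairing \eqref{eq:vr} with $\partial_t v^\delta$ in $L^2_{\beta^\delta\gamma}(\Gamma^R)$ and using the Kirchhoff conditions \eqref{eq:gluedelta} with continuity gives
\begin{align*}
\|\partial_t v^\delta\|^2_{L^2_{\beta^\delta\gamma}}+\frac14\frac{\ud}{\ud t}\sum_k\int_{J_k}\alpha_k^{R,\delta}|\partial_z v^\delta|^2\gamma_k\ud z
=-\frac12\sum_k\int_{J_k}\alpha_k^{R,\delta}\partial_z v^\delta\,\overline{\partial_t v^\delta}\,\gamma_k'\ud z+\langle f^\delta,\partial_t v^\delta\rangle_{L^2_{\beta^\delta\gamma}} .
\end{align*}
The cross term is bounded via Assumption \ref{asp:reg-gamma}:
\[
|\alpha^{R,\delta}\partial_z v\,\partial_t v\,\gamma'|\le \sqrt{\alpha^{R,\delta}\gamma}\,|\partial_z v|\cdot\sqrt{\kappa_1\beta^\delta\gamma}\,|\partial_t v| .
\]
By Young's inequality, half of $\|\partial_t v^\delta\|^2$ is absorbed and the remainder is $C\int\alpha^{R,\delta}|\partial_z v^\delta|^2\gamma$. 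Gronwall in time then gives
\[
\sup_t\int\alpha^{R,\delta}|\partial_z v^\delta|^2\gamma+\int_0^T\|\partial_s v^\delta\|^2_{L^2_{\beta^\delta\gamma}}\ud s\le C\Big(\int\alpha^{R,\delta}|u_0'|^2\gamma+\int_0^T\|f^\delta\|^2_{L^2_{\beta^\delta\gamma}}\Big).
\]
Rigorously, this is done on Galerkin or Yosida approximations, or using $v^\delta\in D(\check{\mathcal L}^\delta)$ for $t>0$ together with analyticity (Lemma \ref{lem:analydelta}).

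Each piece is then made uniform in $\delta$. The initial term satisfies $\alpha^{R,\delta}\le\mathfrak c_2(\alpha+1)$, so it is controlled by $\|u_0\|_{W^{1,2}_{(\alpha+1)\gamma,\beta\gamma}}$, which explains that hypothesis. The forcing uses $\beta^\delta\le\mathfrak c_5\beta$ and $1_\Gamma\in L^2_{\beta\gamma}$, giving $\|f^\delta\|^2\le C(1+\|\check u^\delta\|^2_{L^2_{\beta^\delta\gamma}})$, which is bounded by \eqref{eq:L2delta}. On the left, $\alpha^{R,\delta}\ge\mathfrak c_0\alpha^R$ yields the $\alpha^R\gamma$-seminorm. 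Also $\beta^\delta\ge\mathfrak c_3>0$ converts $\|\partial_s v^\delta\|_{L^2_{\beta^\delta\gamma}}$ into $L^2_\gamma$, which is why the statement has $L^2_\gamma$ there.

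The main obstacle is the $L^2_{\beta\gamma}$ part of the $W^{1,2}_{\alpha^R\gamma,\beta\gamma}$ norm. The equation only controls $L^2_{\beta^\delta\gamma}$, and $\beta^\delta\le\beta$ goes the wrong way near interior vertices, where $\beta$ has a logarithmic blow-up. I would use $v^\delta(t)=v^\delta(0)+\int_0^t\partial_s v^\delta$, so that
\[
\|v^\delta(t)\|_{L^2_{\beta\gamma}}\le\|u_0\|_{L^2_{\beta\gamma}}+\sup_z\Big|\int_0^t\partial_s v^\delta(s,z)\ud s\Big|\cdot\|1\|_{L^2_{\beta\gamma}} .
\]
Near an interior vertex, $\alpha^R$ is bounded below, so the one-dimensional Sobolev embedding gives a sup norm bound on $v^\delta-u_0$ from its $L^2_\gamma$ norm and its $\alpha^R\gamma$-derivative norm, both already controlled. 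Integrability of $\log$ (i.e.\ $\beta\in L^1$ near saddles) then bounds the $\beta\gamma$-weighted integral there. Away from interior vertices, $\beta^\delta=\beta$ by \eqref{eq:beta1}, so nothing is needed.
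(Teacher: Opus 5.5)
Your proposal is correct and follows essentially the same route as the paper: the energy identity for $\mathcal{Q}(v^\delta(t))=\frac12\sum_k\int_{J_k}\alpha_k^{R,\delta}|\partial_z v^\delta|^2\gamma_k\,\ud z$ obtained by testing \eqref{eq:vr} with $\partial_t v^\delta$, the cross term handled by Assumption \ref{asp:reg-gamma} and Young's inequality, Gr\"onwall, the $\delta$-uniform conversions via $\mathfrak{c}_0,\mathfrak{c}_2,\mathfrak{c}_3,\mathfrak{c}_5$, and the same half-edge splitting. In that splitting $\beta^\delta=\beta$ near exterior vertices, while near interior vertices $\alpha^R$ is bounded below, so the one-dimensional Sobolev embedding into $L^\infty$ and the integrability of the logarithmic singularity of $\beta$ give the $L^2_{\beta\gamma}$ bound.

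The differences are minor. The paper packages the last step as the interpolation inequality \eqref{eq:L2beta} applied to $v^\delta(t)$ itself, an inequality it reuses for $r^\delta$ and in the proof of Theorem \ref{tho:FEM}; you apply the embedding to the increment $v^\delta(t)-u_0$ instead. Your global $\sup_z$ display should be read only on the interior-vertex half-edges, since no $L^\infty$ control is available near exterior vertices where $\alpha^R$ degenerates. Finally, your caveat that the bound genuinely holds in expectation, because the forcing involves the random $\check u^\delta$, applies equally to the paper's own Gr\"onwall step.
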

\begin{proof}
	By $\alpha_k^{R,\delta}\le \mathfrak{c}_2(\alpha_k+1)$ and $\beta_k^\delta\le \mathfrak{c}_5\beta_k$ in Assumption \ref{asp:reg}, for all $\delta\in(0,\delta_0)$ we have
	\begin{align}\label{eq:Qv0}
		\|v^\delta(0)\|_{W^{1,2}_{\alpha^{R,\delta}\gamma,\beta^\delta\gamma}(\Gamma^R)}=\|\check{u}(0)\|_{W^{1,2}_{\alpha^{R,\delta}\gamma,\beta^\delta\gamma}(\Gamma^R)}\le C\|u_0\|_{ W^{1,2}_{(\alpha+1)\gamma,\beta\gamma}(\Gamma)}.
	\end{align}
	For $ t\in[0,T]$, we denote $$\mathcal{Q}(v^\delta(t)):=\frac12\sum_{k=1}^m
	\int_{J_k}\alpha_k^{R,\delta}(z)|\frac{\partial}{\partial z}v^\delta(t,z,k)|^2
	\gamma_k(z)\ud z.$$
	From \eqref{eq:vr}, Assumption \ref{asp:reg-gamma}, Young's inequality, and the linear growth of $b$, it holds  
	\begin{align*}
		\frac{\ud}{\ud t}\mathcal{Q}(v^\delta(t))&=-\sum_{k=1}^m
		\int_{J_k}\alpha_k^{R,\delta}(z)\frac{\partial}{\partial z}v^\delta(t)\partial_tv^\delta(t)
		\gamma_k'(z)\ud z
		-2\langle \check{\mathcal{L}}^\delta v^\delta(t),\partial_tv^\delta(t)\rangle_{L^2_{\beta^\delta\gamma}(\Gamma^R)}\\
		&=\frac{1}{2}\kappa_1\mathcal{Q}(v^\delta(t))
		+\|\partial_tv^\delta(t)\|_{L^2_{\beta^\delta\gamma}(\Gamma^R)}^2-2\langle\partial_tv^\delta(t)-\check{B}(\check{u}^\delta(t)),\partial_tv^\delta(t)\rangle_{L^2_{\beta^\delta\gamma}(\Gamma^R)}\\
		&\le \frac{1}{2}\kappa_1\mathcal{Q}(v^\delta(t)) -\frac12\|\partial_tv^\delta(t)\|_{L^2_{\beta^\delta\gamma}(\Gamma^R)}^2+C\Big(1+\|\check{u}^\delta(t)\|_{L^2_{\beta^\delta\gamma}(\Gamma^R)}^2\Big).
	\end{align*}
	As a result of \eqref{eq:Qv0} and Lemma \ref{lem:analydelta}, we can readily apply the Gr\"onwall inequality to obtain 
	\begin{align}\label{eq:vreg}
\mathcal{Q}(v^\delta(t))+ \int_0^T\|\partial_sv^\delta(s)\|_{L^2_{\beta^\delta\gamma}(\Gamma^R)}^2 \ud s\le C\Big(1+\|u_0\|_{ W^{1,2}_{(\alpha+1)\gamma,\beta\gamma}(\Gamma)}^2\Big),\quad t\in[0,T].
	\end{align}
		According to \eqref{eq:vreg}, for any $0\le s\le t\le T$,
	\begin{equation}\label{eq:vHolderreg}
		\|v^\delta(t)-v^\delta(s)\|_{L^2_{\beta^\delta\gamma}(\Gamma^R)}^2\le (t-s)\int_s^t \|\partial_{s_1}v^\delta(s_1)\|_{L^2_{\beta^\delta\gamma}(\Gamma^R)}^2\ud s_1\le C(t-s).
	\end{equation}
	A combination of \eqref{eq:vreg} and \eqref{eq:vHolderreg} with $s=0$ proves
		\begin{align}\label{eq:vdeltadelta}
		\|v^\delta(t)\|_{W^{1,2}_{\alpha^{R,\delta}\gamma,\beta^\delta\gamma}(\Gamma^R)}+ \int_0^T\|\partial_sv^\delta(s)\|_{L^2_{\beta^\delta\gamma}(\Gamma^R)}^2 \ud s\le C.
	\end{align}

To prove \eqref{eq:vunif}, recall that we have split each edge $J_k$ into two parts $J_k^{(1)}$ and $J_k^{(2)}$, where $\{J_k^{(l)}\}_{l=1}^2$ connects exactly one vertex $O_i=(H (\mathbf{x}_i),k)$ of $\Gamma^R$. 
\begin{enumerate}
\item[Case 1.] If $O_i$ is an exterior vertex, then $\beta_k^\delta=\beta_k$ on $J_k^{(l)}$, which implies 
\begin{align}\label{eq:saddlesimL2}
			\|f\|_{L^2_{\beta\gamma}(J_k^{(l)})}^2= \|f\|_{L^2_{\beta^\delta\gamma}(\Gamma^R)},\quad  f\in W^{1,2}_{\alpha^{R,\delta}\gamma,\beta^\delta\gamma}(\Gamma^R).
		\end{align}

\item[Case 2.] If $O_i$ is an interior vertex, then $\alpha_k^{R,\delta}=\alpha_k^R$ is uniformly bounded from below and above by positive constants on $J_k^{(l)}$. 
			Since $\gamma$ is continuous on $\Gamma$,  for any $(z,k)\in\Gamma^R$ with $z\in[0, H _0+1]$,
		\begin{equation}\label{eq:gammalowupp-H}
		C^{-1}\le \gamma_k(z)\le C.
		\end{equation}
		Assume without loss of generality that the left endpoint of $J_k^{(l)}$ corresponds to the interior vertex $O_i$; otherwise the integral domain $[H (\mathbf{x}_i),H (\mathbf{x}_i)+|J_k^{(l)}|]$ in \eqref{eq:saddleL2} is replaced by $[H (\mathbf{x}_i)-|J_k^{(l)}|,H (\mathbf{x}_i)]$.
		Then, by $\beta_k^\delta\ge \mathfrak{c}_3>0$, the Sobolev embedding $ W^{1,2}(J_k^{(l)})\hookrightarrow L^\infty(J_k^{(l)})$, \eqref{eq:T}, and \eqref{eq:gammalowupp-H}, we obtain
		\begin{align}\label{eq:saddleL2}
			&\|f\|_{L^2_{\beta\gamma}(J_k^{(l)})}^2=\int_{H (\mathbf{x}_i)}^{H (\mathbf{x}_i)+|J_k^{(l)}|}|f(z,k)|^2\beta_k\gamma_k\ud z\\\notag
			&\le \|f(\cdot,k)\|_{L^2(J_k^{(l)})}\|f(\cdot,k)\|_{L^\infty(J_k^{(l)})}\bigg(\int_{H (\mathbf{x}_i)}^{H (\mathbf{x}_i)+|J_k^{(l)}|}|\beta_k\gamma_k|^2\ud z\bigg)^{\frac12}\\\notag
			&\le C \|f\|_{L^2_{\beta^\delta\gamma}(J_k^{(l)})}\|f\|_{W^{1,2}_{\alpha^{R,\delta}\gamma,\beta^\delta\gamma}(J_k^{(l)})}\bigg(\int_{H (\mathbf{x}_i)}^{H (\mathbf{x}_i)+|J_k^{(l)}|}(1+|\log|z-H (\mathbf{x}_i)||^2)\gamma_k^2\ud z\bigg)^{\frac12}\\\notag
&\le  C\|f\|_{L^2_{\beta^\delta\gamma}(\Gamma^R)}\|f\|_{W^{1,2}_{\alpha^{R,\delta}\gamma,\beta^\delta\gamma}(\Gamma^R)},\quad  f\in W^{1,2}_{\alpha^{R,\delta}\gamma,\beta^\delta\gamma}(\Gamma^R).
		\end{align}
\end{enumerate}	
Combining \eqref{eq:saddlesimL2} and \eqref{eq:saddleL2} in the above two cases allows us to conclude
	\begin{align}\label{eq:L2beta}
		\|f\|_{L^2_{\beta\gamma}(\Gamma^R)}^2\le C \|f\|_{L^2_{\beta^\delta\gamma}(\Gamma^R)}\|f\|_{W^{1,2}_{\alpha^{R,\delta}\gamma,\beta^\delta\gamma}(\Gamma^R)},
	\end{align}
	for some constant $C>0$ independent of $ f\in W^{1,2}_{\alpha^{R,\delta}\gamma,\beta^\delta\gamma}(\Gamma^R)$.
	Taking further the assumption $\alpha_k^{R,\delta}\ge \mathfrak{c}_0\alpha_k^R$ and \eqref{eq:L2beta} into account, we arrive at
	\begin{equation}\label{eq:fW}
		\|f\|_{W_{\alpha^R\gamma,\beta\gamma}^{1,2}(\Gamma^R)}\le C\|f\|_{W^{1,2}_{\alpha^{R,\delta}\gamma,\beta^\delta\gamma}(\Gamma^R)},\quad f\in W^{1,2}_{\alpha^{R,\delta}\gamma,\beta^\delta\gamma}(\Gamma^R).
	\end{equation}
	Since $\beta_k^\delta\ge \mathfrak{c}_3$, we can combine \eqref{eq:fW} and \eqref{eq:vdeltadelta} to finish the proof of \eqref{eq:vunif}.
\end{proof}

Given a Banach space $(X,\|\cdot\|_X)$, for any $\vartheta\in(0,1)$, we denote the seminorm of $\vartheta$-Hölder continuous functions $x: [0,T] \to X$ by
$$|x|_{\mathcal{C}^\vartheta([0,T];X)}:=\sup_{0\le s<t\le T}\frac{\|x(t)-x(s)\|_X}{|t-s|^\vartheta}.$$

\begin{lemma}\label{lem:rHolder}	
	Under the conditions of Lemma \ref{lem:uHolder}, for any $p>2$, $\vartheta\in(0,\frac14)$, and $R> H _0$, there exists a constant $C:=C(R, p,\vartheta)>0$ such that for any $\delta\in(0,\delta_0)$,
		\begin{equation*}
		\E\left[|r^\delta|_{\mathcal{C}^\vartheta([0,T];L_{\beta\gamma}^2(\Gamma^R))}^p\right]\le C.
	\end{equation*}
	
\end{lemma}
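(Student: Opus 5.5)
The plan is to first bound $r^\delta$ in the $\delta$-dependent norms and then pass to $L^2_{\beta\gamma}(\Gamma^R)$ using the interpolation inequality \eqref{eq:L2beta}. That inequality gives
\[
\|f\|_{L^2_{\beta\gamma}(\Gamma^R)}\le C\|f\|^{1/2}_{L^2_{\beta^\delta\gamma}(\Gamma^R)}\|f\|^{1/2}_{W^{1,2}_{\alpha^{R,\delta}\gamma,\beta^\delta\gamma}(\Gamma^R)} .
\]
So it suffices to control two things uniformly in $\delta$: a Hölder modulus of $r^\delta$ in $L^2_{\beta^\delta\gamma}$ of order $t^{2\vartheta}$ (essentially up to $1/2$), and a bound on $r^\delta$ in the energy space. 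The exponent $\vartheta<\frac14$ reflects exactly this halving of the time regularity.

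To handle the non-symmetry of $\check{\mathcal L}^\delta$, I would identify the energy norm with a fractional power. On the compact graph $\Gamma^R$ the weight satisfies $C(R)^{-1}\le\gamma\le C(R)$. Hence $L^2_{\beta^\delta\gamma}$ and $L^2_{\beta^\delta}$ have equivalent norms, and so do $W^{1,2}_{\alpha^{R,\delta}\gamma,\beta^\delta\gamma}$ and $W^{1,2}_{\alpha^{R,\delta},\beta^\delta}$, with constants depending only on $R$. For $\gamma\equiv1$ the operator $\mathcal L^\delta:=\frac{1}{2\beta^\delta}\frac{\ud}{\ud z}(\alpha^{R,\delta}\frac{\ud}{\ud z}\cdot)$ with the gluing conditions \eqref{eq:gluedelta} is self-adjoint and nonpositive on $L^2_{\beta^\delta}(\Gamma^R)$. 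Therefore $\|(I-\mathcal L^\delta)^{1/2}f\|_{L^2_{\beta^\delta}}^2=\|f\|^2_{L^2_{\beta^\delta}}+\frac12\|f'\|^2_{L^2_{\alpha^{R,\delta}}}$, with no constants depending on $\delta$.

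With this identification, I would rewrite the equation for $\check u^\delta$ (rather than its $\gamma$-weighted mild form) as a mild equation driven by $\mathcal L^\delta$ on $L^2_{\beta^\delta}$. The first-order difference $\check{\mathcal L}^\delta-\mathcal L^\delta$ does not appear as a perturbation in the dynamics, since the operator itself does not depend on $\gamma$. The only change is the Hilbert space in which the semigroup is considered, and Lemma \ref{lem:smo} applies equally with $\gamma\equiv1$ (there $\kappa_1=0$). The Hilbert–Schmidt norm of $\check G(\check u^\delta)$ into $L^2_{\beta^\delta}$ is bounded by $C(1+\|\check u^\delta\|_{L^2_{\beta^\delta\gamma}})$, using \eqref{eq:e_i} and $\beta^\delta\le\mathfrak c_5\beta$.

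The estimates then proceed by the factorization method of Da Prato–Kotelenez–Seidler. Write $r^\delta(t)=\frac{\sin\pi\sigma}{\pi}\int_0^t(t-s)^{\sigma-1}e^{(t-s)\mathcal L^\delta}Y_\sigma(s)\ud s$ with $Y_\sigma(s)=\int_0^s(s-r)^{-\sigma}e^{(s-r)\mathcal L^\delta}\check G(\check u^\delta(r))\ud W(r)$. Using Lemma \ref{lem:smo} with $\gamma\equiv 1$, the Burkholder inequality, and the moment bound \eqref{eq:L2delta} (or its $p$-th moment version), I obtain the following.

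(a) $\E\|(I-\mathcal L^\delta)^{\theta}Y_\sigma\|^p_{L^p(0,T;L^2_{\beta^\delta})}\le C$ for $\theta+\sigma<\frac12$.

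(b) $\E\sup_t\|(I-\mathcal L^\delta)^{\theta'}r^\delta(t)\|^p\le C$ and $\E|r^\delta|^p_{\mathcal C^{\theta''}([0,T];L^2_{\beta^\delta})}\le C$ for $\theta'+\theta''<\sigma-\frac1p+\theta$. Here \eqref{eq:smo2} is used for the time increments.

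Interpolating the time increment $r^\delta(t)-r^\delta(s)$ between $L^2_{\beta^\delta}$ and $D((I-\mathcal L^\delta)^{1/2})$ inside \eqref{eq:L2beta} then gives $\|r^\delta(t)-r^\delta(s)\|_{L^2_{\beta\gamma}}\le C\|\cdot\|^{1/2}_{L^2_{\beta^\delta}}\|\cdot\|^{1/2}_{(I-\mathcal L^\delta)^{1/2}}$. This yields Hölder exponent $\vartheta<\frac14$ after choosing $p$ large; for small $p$ it follows from Hölder's inequality.

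The main obstacle is the endpoint. The energy norm corresponds to the fractional power $\frac12$, which the stochastic convolution only barely misses. I therefore expect to need the refined form of \eqref{eq:L2beta}, in which the $L^2_{\beta\gamma}$ norm is controlled by $\|f\|^{1-\epsilon'}_{L^2}\|(I-\mathcal L^\delta)^{1/2-\epsilon}f\|^{\epsilon'}$-type bounds. The logarithmic integrability of $\beta$ used in \eqref{eq:saddleL2} only requires $L^\infty$ control, which a fractional power above $\frac14$ already provides via Sobolev embedding on intervals where $\alpha^{R,\delta}$ is bounded below. Checking that these embedding constants do not depend on $\delta$ is the delicate point.
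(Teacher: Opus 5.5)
You locate the difficulty correctly, but the proposal does not resolve it, so as written the decisive step is missing. Applying \eqref{eq:L2beta} to $r^\delta(t)-r^\delta(s)$ needs moments of the full energy norm, i.e.\ of $\|(I-\mathcal L^\delta)^{1/2}r^\delta(t)\|$. Your factorization bounds (a)--(b) only reach $(I-\mathcal L^\delta)^{\theta'}$ with $\theta'<\sigma+\theta-\frac1p<\frac12$. The restriction $\sigma+\theta<\frac12$ is forced by the Burkholder bound on $Y_\sigma$, so factorization cannot reach the endpoint. The refined inequality you substitute is only conjectured, and its $\delta$-uniformity is left open.

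The paper proves the endpoint bound $\E\|(\mathcal A^\delta_\lambda)^{1/2}r^\delta_\lambda(t)\|^p\le C$ directly, for each fixed $t$. After Burkholder it freezes the integrand: $\check G(\check u^\delta(s))=[\check G(\check u^\delta(s))-\check G(\check u^\delta(t))]+\check G(\check u^\delta(t))$.
\begin{itemize}
\item The frozen part is handled by the square-function estimate \eqref{eq:criti}, applied column by column. For $A=\mathcal A^\delta_\lambda$ on $L^2_{\beta^\delta}(\Gamma^R)$, where it is self-adjoint, this is just $\int_0^\infty\|A^{1/2}e^{-sA}\phi\|^2\ud s=\frac12\|\phi\|^2$.
\item The difference is handled by $\|(\mathcal A^\delta_\lambda)^{1/2}e^{-(t-s)\mathcal A^\delta_\lambda}\|^2\le C(t-s)^{-1}$. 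This is combined with the $L^2_{\beta^\delta\gamma}$-H\"older continuity of order $<\frac12$ of $\check u^\delta=v^\delta+r^\delta$, from \eqref{eq:vHolderreg} and Step~1.
\end{itemize}
This time-regularity input is essential. For a merely bounded integrand oscillating in time, the stochastic convolution in general need not lie in $D((I-\mathcal L^\delta)^{1/2})$. Pointwise-in-time moments suffice, because the H\"older seminorm is recovered afterwards by Garsia--Rodemich--Rumsey.

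Your own workaround can, however, be completed in a few lines, and it then avoids the endpoint entirely.
\begin{itemize}
\item Since $\mathcal L^\delta$ is self-adjoint and nonpositive on $L^2_{\beta^\delta}(\Gamma^R)$, $D((I-\mathcal L^\delta)^{s})$ is the complex interpolation space of order $2s$ between $L^2_{\beta^\delta}(\Gamma^R)$ and the form domain $D((I-\mathcal L^\delta)^{1/2})$. The norms are equivalent with universal constants, via the spectral theorem.
\item On a half-edge $J=J_k^{(l)}$ adjacent to an interior vertex, restriction is bounded $L^2_{\beta^\delta}(\Gamma^R)\to L^2(J)$, because $\beta^\delta\ge\mathfrak c_3$.
\item On the same $J$, restriction is bounded $D((I-\mathcal L^\delta)^{1/2})\to W^{1,2}(J)$, because $\alpha^{R,\delta}=\alpha^R$ is bounded below there by a constant depending only on $R$. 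Both bounds are uniform in $\delta$.
\item Interpolating, restriction maps $D((I-\mathcal L^\delta)^s)$ into $H^{2s}(J)\hookrightarrow L^\infty(J)$, uniformly in $\delta$, for every $s\in(\frac14,\frac12)$.
\item On half-edges adjacent to exterior vertices, $\beta^\delta=\beta$, so nothing is needed there.
\end{itemize}
Arguing as in \eqref{eq:saddleL2}, this gives $\|f\|^2_{L^2_{\beta\gamma}(\Gamma^R)}\le C(R)\|f\|_{L^2_{\beta^\delta\gamma}(\Gamma^R)}\|(I-\mathcal L^\delta)^sf\|_{L^2_{\beta^\delta}(\Gamma^R)}$. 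Now use (b) separately for the supremum factor (with $\theta'=s$) and for the H\"older factor (with $\theta''=2\vartheta$), then apply Cauchy--Schwarz. This yields every $\vartheta<\frac14$.

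Compared with the paper, this route needs neither \eqref{eq:criti} nor the a priori time regularity of $\check u^\delta$, and it even puts the supremum inside the expectation. But this interpolation step is what actually carries the proof, and your proposal only names it as the delicate point without doing it.
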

\begin{proof}

	The proof is divided into three steps.

\emph{Step 1}.
Let $\lambda>\frac{1}{8}\kappa_1$. If we denote $ r^\delta_\lambda (t):=e^{-\lambda t}r^\delta(t)$ , then
	\begin{equation}\label{eq:Rdef}
		r^\delta_\lambda (t)=\int_0^t e^{-\mathcal{A}^\delta_\lambda  (t-s)} e^{-\lambda s}\check{G}(\check{u}^\delta(s))\ud W(s),\quad t\in[0,T],
	\end{equation} 
	where $\mathcal{A}^\delta_\lambda=\lambda I-\check{\mathcal{L}}^\delta$.
	For any $0\le s<t\le T$,
	\begin{align*}
		r^\delta_\lambda (t)- r^\delta_\lambda (s)&=\int_0^s (\mathcal{A}^\delta_\lambda )^{\vartheta }e^{-\mathcal{A}^\delta_\lambda  (s-s_1)}(\mathcal{A}^\delta_\lambda )^{-\vartheta }(e^{-\mathcal{A}^\delta_\lambda  (t-s)} -I)e^{-\lambda s_1}\check{G}(\check{u}^\delta(s_1))\ud W(s_1)\\
		&\quad+\int_s^t e^{-\mathcal{A}^\delta_\lambda  (t-s_1)} e^{-\lambda s_1}\check{G}(\check{u}^\delta(s_1))\ud W(s_1).
	\end{align*}
	Then, invoking \eqref{eq:smo1}, \eqref{eq:smo2}, the contraction of $\{e^{-\mathcal{A}^\delta_\lambda  t}\}_{t\ge0}$ on $L^2_{\beta^\delta\gamma}(\Gamma^R)$, and the Burkholder inequality \cite{DZ14}, we infer from the linear growth of $g$ and Lemma \ref{lem:analydelta} that for any $\vartheta \in(0,\frac12)$ and any $p\ge2$, 
	\begin{align}\label{eq:rdeltaHolder}
		\E\left[\|r^\delta_\lambda(t)- r^\delta_\lambda (s)
		\|_{L^2_{\beta^\delta\gamma}(\Gamma^R)}^p\right]
		&\le C(\lambda,\vartheta,p)(t-s)^{\vartheta p}\bigg(1+\E\Big[\sup_{s_1\in[0,T]}
		\|\check{u}^\delta(s_1)\|_{L^p_{\beta^\delta\gamma}(\Gamma^R)}^p\Big]\bigg)\\\notag
		&\le C(\vartheta,\lambda,p)(t-s)^{\vartheta p},\quad0\le s\le t\le T.
	\end{align} 
Because $r^\delta(t)=e^{\lambda t}r^\delta_\lambda(t)$, it can be shown that for any $\vartheta \in(0,\frac12)$ and any $p\ge2$.
	\begin{align}\label{eq:rreg}
		\E\left[\|r^\delta(t)-r^\delta(s)\|_{L^2_{\beta^\delta\gamma}(\Gamma^R)}^p\right]\le C(\vartheta,\lambda,p)(t-s)^{\vartheta p},\quad 0\le s\le t\le T.
	\end{align}
	
	\emph{Step 2.}
 By Lemma \ref{lem:analydelta}, $-\mathcal{A}_\lambda^\delta$ generates a contraction semigroup on $L_{\beta^\delta}^2(\Gamma^R)$, which implies that the assumption of \cite[Proposition A.19]{DZ14} holds (see \cite[Remark A.20]{DZ14}). Therefore, by applying \cite[Proposition A.21]{DZ14}, there exists a constant $C>0$ such that for any $t\in[0,T]$,
	\begin{align}\label{eq:criti}
		\int_0^t\|(\mathcal{A}^\delta_\lambda )^{\frac12}e^{-s\mathcal{A}^\delta_\lambda }\phi\|_{L_{\beta^\delta \gamma}^2(\Gamma^R)}^2\ud s
		\le C\| \phi\|_{L_{\beta^\delta\gamma}^2(\Gamma^R)}^2,\quad \phi\in L_{\beta^\delta \gamma}^2(\Gamma^R).
	\end{align}
	For the stochastic process $ r^\delta_\lambda $ defined in \eqref{eq:Rdef}, applying the Burkholder inequality leads to 
	\begin{align*}
		\E\left[\|(\mathcal{A}^\delta_\lambda )^{\frac12}r^\delta_\lambda(t)\|_{L_{\beta^\delta\gamma}^2(\Gamma^R)}^p\right]\le C\E\left[\left(\int_0^t \|(\mathcal{A}^\delta_\lambda )^{\frac12}e^{-\mathcal{A}^\delta_\lambda  (t-s)} e^{-\lambda s}\check{G}(\check{u}^\delta(s))\|_{\mathscr{L}_2(\mathcal{U}_0,L_{\beta^\delta \gamma}^2(\Gamma^R))}^2\ud s\right)^{\frac{p}{2}}\right].
	\end{align*}
	Moreover,
	owing to \eqref{eq:smo1} and \eqref{eq:criti}, we have
	\begin{align*}
		&\int_0^t\|(\mathcal{A}^\delta_\lambda )^{\frac12}e^{-\mathcal{A}^\delta_\lambda  (t-s)} e^{-\lambda s}\check{G}(\check{u}^\delta(s))\|_{\mathscr{L}_2(\mathcal{U}_0,L_{\beta^\delta\gamma}^2(\Gamma^R))}^2\ud s\\
		&\le 2\int_0^t \big\|(\mathcal{A}^\delta_\lambda )^{\frac12}e^{-\mathcal{A}^\delta_\lambda  (t-s)} e^{-\lambda s}\big(\check{G}(\check{u}^\delta(s))-\check{G}(\check{u}^\delta(t))\big)\big\|_{\mathscr{L}_2(\mathcal{U}_0,L_{\beta^\delta\gamma}^2(\Gamma^R))}^2\ud s\\
		&\quad+2\int_0^t \|(\mathcal{A}^\delta_\lambda )^{\frac12}e^{-\mathcal{A}^\delta_\lambda  (t-s)} e^{-\lambda s}\check{G}(\check{u}^\delta(t))\|_{\mathscr{L}_2(\mathcal{U}_0,L_{\beta^\delta\gamma}^2(\Gamma^R))}^2\ud s\\
		&\le C\int_0^t (t-s)^{-1} \|\check{G}(\check{u}^\delta(s))-\check{G}(\check{u}^\delta(t))\|_{\mathscr{L}_2(\mathcal{U}_0,L_{\beta^\delta\gamma}^2(\Gamma^R))}^2\ud s+C\|\check{G}(\check{u}^\delta(t))\|_{\mathscr{L}_2(\mathcal{U}_0,L_{\beta^\delta\gamma}^2(\Gamma^R))}^2.
	\end{align*}
	Besides, utilizing the Lipschitz continuity of $g$, it holds that 
	\begin{align*}
		&\|\check{G}(\check{u}^\delta(s))-\check{G}(\check{u}^\delta(t))\|_{\mathscr{L}_2(\mathcal{U}_0,L_{\beta^\delta\gamma}^2(\Gamma^R))}\le C\|\check{u}^\delta(s)-\check{u}^\delta(t)\|_{L_{\beta^\delta\gamma}^2(\Gamma^R)}\\&\le \|r^\delta(s)-r^\delta(t)\|_{L_{\beta^\delta\gamma}^2(\Gamma^R)}+\|v^\delta(s)-v^\delta(t)\|_{L_{\beta^\delta\gamma}^2(\Gamma^R)}.
	\end{align*}
	Together with \eqref{eq:vHolderreg} and \eqref{eq:rreg}, we deduce that for any $p\ge2$ and $\lambda>\frac{1}{8}\kappa_1$,
	\begin{equation}\label{eq:rspatialreg}
	 \E\left[\|(\mathcal{A}^\delta_\lambda )^{\frac12}r^\delta_\lambda(t)\|_{L_{\beta^\delta\gamma}^2(\Gamma^R)}^p\right]\le C(p,\lambda),\quad 0\le t\le T.
	\end{equation}
	
	\emph{Step 3.}
		In view of \eqref{eq:L2beta}, for any $0\le s\le t\le T$ we have
	\begin{align*}
		\|r^\delta(t)-r^\delta(s)\|_{L^2_{\beta\gamma}(\Gamma^R)}^2\le C \|r^\delta(t)-r^\delta(s)\|_{L^2_{\beta^\delta\gamma}(\Gamma^R)}\Big(\|r^\delta(t)\|_{W^{1,2}_{\alpha^{R,\delta}\gamma,\beta^\delta\gamma}(\Gamma^R)}+\|r^\delta(s)\|_{W^{1,2}_{\alpha^{R,\delta}\gamma,\beta^\delta\gamma}(\Gamma^R)}\Big).
	\end{align*}
	This, combined with \eqref{eq:rreg} and H\"older's inequality, yields that for any $\vartheta\in(0,\frac14)$ and any $p\ge2$,
		\begin{equation}\label{eq:rdeltaHolderbeta}
		\E\left[\|r^\delta(t)-r^\delta(s)\|_{L^2_{\beta\gamma}(\Gamma^R)}^{p}\right]
		\le C(R)(t-s)^{\vartheta p} \sup_{t\in[0,T]}\left(\E\left[\|r^\delta(t)\|_{W^{1,2}_{\alpha^{R,\delta}\gamma,\beta^\delta\gamma}(\Gamma^R)}^p\right]\right)^{\frac{1}2}.
	\end{equation}
By continuity of $\gamma$, for any $R> H _0$ we have 
	\begin{equation}\label{eq:gammalowupp}
	c_\gamma(R):=\inf_{\Gamma^R}\gamma \le \gamma_k(z)\le 	\sup_{\Gamma^R}\gamma=:C_\gamma(R)\quad \forall~(z,k)\in \Gamma^R.
	\end{equation} 
	In addition, for any $\phi\in W^{1,2}_{\alpha^{R,\delta},\beta^\delta}(\Gamma^R)$,
	\begin{align}\label{eq:self-adj}
		\|(\mathcal{A}^\delta_\lambda )^{\frac12}\phi\|_{L_{\beta^\delta}^2(\Gamma^R)}^2=\langle \mathcal{A}^\delta_\lambda \phi,\phi\rangle_{L_{\beta^\delta}^2(\Gamma^R)}
		=\lambda \sum_{k=1}^m\int_{J_k}|\phi|^2\beta_k^\delta\ud z+\frac12\sum_{k=1}^m\int_{J_k}\alpha_{k}^{R,\delta}|\frac{\ud }{\ud z}\phi|^2\ud z,
	\end{align}
	and thus the fractional norm $\|(\mathcal{A}^\delta_\lambda )^{\frac12}\cdot\|_{L_{\beta^\delta}^2(\Gamma^R)}$ is an equivalent norm of $\|\cdot\|_{W^{1,2}_{\alpha^{R,\delta},\beta^\delta}(\Gamma^R)}$.
	Combining \eqref{eq:gammalowupp}, \eqref{eq:self-adj}, and the relation $ r^\delta_\lambda (t)=e^{-\lambda t}r^\delta(t)$, we arrive at
		\begin{align}\label{eq:rW12}\notag
&\E\left[\|r^\delta(t)\|_{W^{1,2}_{\alpha^{R,\delta}\gamma,\beta^\delta\gamma}(\Gamma^R)}^p\right]\le C(R)\E\left[\|r^\delta(t)\|_{W^{1,2}_{\alpha^{R,\delta},\beta^\delta}(\Gamma^R)}^p\right]\le C(R)\E\left[\|r^\delta_\lambda(t)\|_{W^{1,2}_{\alpha^{R,\delta},\beta^\delta}(\Gamma^R)}^p\right]\\
	&\le C(R)\E\left[\|(\mathcal{A}^\delta_\lambda )^{\frac12}r^\delta_\lambda(t)\|_{L_{\beta^\delta}^2(\Gamma^R)}^p\right]
	\le C(R)\E\left[\|(\mathcal{A}^\delta_\lambda )^{\frac12}r^\delta_\lambda(t)\|_{L_{\beta^\delta\gamma}^2(\Gamma^R)}^p\right]\le C,
	\end{align}
	where the last step is due to \eqref{eq:rspatialreg}.
	Substituting the above inequality \eqref{eq:rW12} into \eqref{eq:rdeltaHolderbeta} yields that for any $\vartheta\in(0,\frac14)$ and any $p\ge2$,
		\begin{equation*}
		\E\left[\|r^\delta(t)-r^\delta(s)\|_{L^2_{\beta\gamma}(\Gamma^R)}^{p}\right]
		\le C(R,p)(t-s)^{\vartheta p}.
	\end{equation*}
	Finally, we apply the Garsia--Rodemich--Rumsey inequality \cite[Theorem 2 \& Corollary 1]{GM18} to complete the proof. 
\end{proof}
On account of Lemma \ref{lem:rHolder} and \eqref{eq:rW12}, for any $p\ge2$ and $\vartheta\in(0,\frac{1}{4})$ we have
\begin{equation}\label{eq:rdeltaee}
\sup_{t\in[0,T]}\E\left[\|r^\delta(t)\|_{W^{1,2}_{\alpha^{R}\gamma,\beta\gamma}(\Gamma^R)}^p\right]+\E\left[|r^\delta|_{\mathcal{C}^\vartheta([0,T];L_{\beta\gamma}^2(\Gamma^R))}^p\right]\le C(R,p),
\end{equation}
which, along with Lemma \ref{lem:uHolder}, implies that for any $p\ge2$,
\begin{equation}\label{eq:udeltaee}
\sup_{t\in[0,T]}\E\left[\|\check{u}^\delta(t)\|_{W^{1,2}_{\alpha^{R}\gamma,\beta\gamma}(\Gamma^R)}^p\right]\le C(R,p).
\end{equation}
\begin{lemma}\label{lem:tight}
		Let \eqref{eq:etaR} and Assumptions \ref{asp:reg}--\ref{asp:reg-gamma} hold. Assume that $1_\Gamma\in L^2_{\beta\gamma}(\Gamma)$ and $u_0\in W^{1,2}_{(\alpha+1)\gamma,\beta\gamma}(\Gamma)$.	
	Then for any fixed $R>H_0$, the law of $\{\check{u}^\delta\}_{\delta\in(0,\delta_0)}$ is tight in $\mathcal{C}([0,T];L^2_{\beta\gamma}
	(\Gamma^R))$.
\end{lemma}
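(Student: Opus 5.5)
The plan is an Arzelà--Ascoli type criterion in $\mathcal{C}([0,T];L^2_{\beta\gamma}(\Gamma^R))$, applied separately to the two pieces of the decomposition $\check{u}^\delta=v^\delta+r^\delta$. It suffices to show that each family $\{v^\delta\}$ and $\{r^\delta\}$ has tight laws, since the sum of two tight families of $\mathcal{C}([0,T];L^2_{\beta\gamma}(\Gamma^R))$-valued random variables is tight. For a compact embedding $K\hookrightarrow L^2_{\beta\gamma}(\Gamma^R)$, the set
\[
\mathcal{K}_M:=\Big\{x\in\mathcal{C}([0,T];L^2_{\beta\gamma}(\Gamma^R)):\ \|x(t_j)\|_{K}\le M \text{ for } t_j\in\mathbb{Q}\cap[0,T],\ |x|_{\mathcal{C}^\vartheta([0,T];L^2_{\beta\gamma}(\Gamma^R))}\le M\Big\}
\]
is relatively compact by Arzelà--Ascoli. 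Pointwise relative compactness holds on the dense rational times, and it extends to all $t$ by equicontinuity. Hence tightness follows once $\mathbb P(\check u^\delta\notin\mathcal K_M)$ is small uniformly in $\delta$.

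\textbf{Compact embedding.} The key ingredient is the compactness of the embedding $W^{1,2}_{\alpha^R\gamma,\beta\gamma}(\Gamma^R)\hookrightarrow L^2_{\beta\gamma}(\Gamma^R)$. By \eqref{eq:gammalowupp}, $\gamma$ is bounded above and below on $\Gamma^R$, so we may drop it and work edge by edge on each half-edge $J_k^{(l)}$.
\begin{enumerate}
\item[(a)] Near interior vertices, $\alpha_k^R$ is bounded away from zero. A bounded set in the weighted space is then bounded in $W^{1,2}(J_k^{(l)})$, hence precompact in $\mathcal{C}(J_k^{(l)})$. Since $\beta_k$ has only a logarithmic singularity there and is therefore integrable, this gives precompactness in $L^2_{\beta}(J_k^{(l)})$.
\item[(b)] Near exterior vertices, including the new vertex $O_{n+1}$, $\beta_k$ is bounded. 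By \eqref{eq:etaR}, \eqref{eq:deri-alpR} and \eqref{eq:A}, $\alpha_k^R$ vanishes exactly linearly, $\alpha_k^R(z)\sim c|z-z_i|$. For the resulting weighted space $\{f: f\in L^2,\ |z-z_i|^{1/2}f'\in L^2\}$, compactness into $L^2$ is precisely \cite[Claim 1 in Lemma 5.6]{CS25}, which I would invoke directly. Alternatively, one shows a uniform small-tail estimate $\int_{|z-z_i|<\epsilon}|f|^2\ud z\le C\epsilon|\log\epsilon|\,\|f\|^2_{W}$ and uses Rellich away from the vertex.
\end{enumerate}
This step is the only place where the linear-decay choice of $\eta_R$ matters. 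I expect it to be the main delicate point, but it is handled by the cited claim.

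\textbf{The $r^\delta$ piece.} By \eqref{eq:rdeltaee}, for every $t$,
\[
\sup_\delta\E\big\|r^\delta(t)\big\|^p_{W^{1,2}_{\alpha^R\gamma,\beta\gamma}(\Gamma^R)}\le C, \qquad \sup_\delta\E\,|r^\delta|^p_{\mathcal{C}^\vartheta([0,T];L^2_{\beta\gamma}(\Gamma^R))}\le C.
\]
The bound at each fixed $t$ gives tightness of the time marginals in $L^2_{\beta\gamma}(\Gamma^R)$ via the compact embedding and Chebyshev's inequality. The Hölder bound gives uniform control of the modulus of continuity. Combining these with the Arzelà--Ascoli criterion in its probabilistic form (tight marginals at each $t$ plus $\lim_{\eta\to0}\sup_\delta\mathbb P(w(r^\delta,\eta)>\epsilon)=0$) yields tightness of $\{r^\delta\}$.

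\textbf{The $v^\delta$ piece.} By Lemma \ref{lem:uHolder}, $\sup_t\|v^\delta(t)\|_{W^{1,2}_{\alpha^R\gamma,\beta\gamma}(\Gamma^R)}\le C$ almost surely. From \eqref{eq:vHolderreg} and \eqref{eq:L2beta}, $\|v^\delta(t)-v^\delta(s)\|_{L^2_{\beta\gamma}(\Gamma^R)}\le C|t-s|^{1/4}$, with the deterministic constant depending on the realization only through $\int_0^T\|\check u^\delta\|^2$. That integral has uniformly bounded moments by \eqref{eq:L2delta}. Thus the laws of $v^\delta$ concentrate, with probability close to one, on a set of the form $\mathcal K_M$, which is compact by Arzelà--Ascoli. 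Adding the two tight families concludes the proof of the lemma.
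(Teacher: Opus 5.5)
Your proposal is correct and uses the same ingredients as the paper's proof: the splitting $\check u^\delta=v^\delta+r^\delta$, the compact embedding $W^{1,2}_{\alpha^R\gamma,\beta\gamma}(\Gamma^R)\hookrightarrow L^2_{\beta\gamma}(\Gamma^R)$ from \cite[Claim 1 in Lemma 5.6]{CS25}, the a priori bounds of Lemma \ref{lem:uHolder} and \eqref{eq:rdeltaee}, and Markov's inequality together with the fact that a sum of two precompact (tight) families is precompact (tight). The only difference is the path-space compactness criterion. You argue directly via Arzel\`a--Ascoli (tight time marginals plus a H\"older modulus), using \eqref{eq:L2beta} to upgrade the $L^2_{\beta^\delta\gamma}$-H\"older bound \eqref{eq:vHolderreg} on $v^\delta$ to a $\tfrac14$-H\"older bound in $L^2_{\beta\gamma}(\Gamma^R)$. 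The paper instead invokes Aubin--Lions for $v^\delta$ and Simon's criterion \cite[Theorem 1]{SJ87} with an $L^1(0,T;W^{1,2}_{\alpha^R\gamma,\beta\gamma}(\Gamma^R))$ bound for $r^\delta$. Both routes need only the $\sup_t\E$-type control in \eqref{eq:rdeltaee}, so for $r^\delta$ your switch to time-dependent compact sets in the probabilistic criterion, rather than a single level $M$ at all rational times, is the right move.
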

\begin{proof}
Due to \eqref{eq:alphaR}, \eqref{eq:A}, and \eqref{eq:T},
	it follows from \cite[Claim 1 in Lemma 5.6]{CS25} that the embedding $W_{\alpha^R\gamma,\beta\gamma}^{1,2}(\Gamma^R)\hookrightarrow L^2_{\beta\gamma}(\Gamma^R)$ is compact. Then due to $L^2_{\beta\gamma}(\Gamma^R)\subset L^2_{\gamma}(\Gamma^R)$, applying Aubin--Lions theorem implies that for any $M>0$, the set
\begin{align*}
	\mathfrak{V}_M:=\left\{\phi\in \mathcal{C}([0,T];L^2_{\beta\gamma}(\Gamma^R)),\|\phi\|_{L^\infty(0,T;W_{\alpha^R\gamma,\beta\gamma}^{1,2}(\Gamma^R))}+\|\partial_t \phi\|_{L^2(0,T;L^2_{\gamma}(\Gamma^R))}\le M\right\}
\end{align*}
is a pre-compact subset of $\mathcal{C}([0,T];L^2_{\beta\gamma}(\Gamma^R))$.

By \cite[Theorem 1]{SJ87}, a subset $\mathfrak{F}\subset\mathcal{C}([0,T];L^2_{\beta\gamma}
(\Gamma^R))$ is relatively compact in $\mathcal{C}([0,T];L^2_{\beta\gamma}
(\Gamma^R))$ if the following two conditions hold:
\begin{align*}
\left\{	\int_{t_1}^{t_2}f(t)\ud t:f\in\mathfrak{F}\right\}~\text{is relatively compact in 
	$L^2_{\beta\gamma}
	(\Gamma^R)$}\quad\forall~0<t_1<t_2<T,\\
\|f(\cdot+\tau)-f(\cdot)\|_{L^\infty(0,T-\tau;L^2_{\beta\gamma}
		(\Gamma^R))}\to 0\quad\text{as}~\tau\to0,\quad\text{uniformly for } f\in\mathfrak{F}.
\end{align*}
Recall that  $W_{\alpha^R\gamma,\beta\gamma}^{1,2}(\Gamma^R)$ is compactly embedded into $ L^2_{\beta\gamma}(\Gamma^R)$. 
Hence for any $M>0$ and $\vartheta>0$, the set 
\begin{align*}
	\mathfrak{K}_M:=\left\{\phi\in \mathcal{C}([0,T];L^2_{\beta\gamma}
	(\Gamma^R));\|\phi\|_{L^1(0,T;W^{1,2}_{\alpha^{R}\gamma,\beta\gamma}(\Gamma^R))}
	+|\phi|_{\mathcal{C}^\vartheta([0,T];L_{\beta\gamma}^2(\Gamma^R))}\le M\right\}
\end{align*}
is a pre-compact subset of $\mathcal{C}([0,T];L^2_{\beta\gamma}
(\Gamma^R))$.
For any $M>0$, the sum $\mathfrak{K}_M+\mathfrak{V}_M:=\{x+y,x\in \mathfrak{K}_M,y\in \mathfrak{V}_M\}$ is a pre-compact subset of $\mathcal{C}([0,T];L^2_{\beta\gamma}
(\Gamma^R))$. By the Markov inequality, \eqref{eq:rdeltaee} and Lemma \ref{lem:uHolder}, it holds that
\begin{align*}
	&\mathbb{P}\left\{\check{u}^\delta\in\mathfrak{K}_M+\mathfrak{V}_M \right\}\ge 	\mathbb{P}\left\{r^\delta\in\mathfrak{K}_M,v^\delta\in\mathfrak{V}_M \right\}\\
	&\ge1-\mathbb{P}\left\{r^\delta\notin\mathfrak{K}_M\right\}
	-\mathbb{P}\left\{v^\delta\notin\mathfrak{V}_M \right\}\ge 1-C(R)M^{-1},
	\end{align*}
	which proves the tightness of the law of $\check{u}^\delta$ in $\mathcal{C}([0,T];L^2_{\beta\gamma}
	(\Gamma^R))$.
\end{proof}

\subsection{Convergence}
In this subsection, we prove the convergence of the truncated regularized approximation \eqref{eq:reguarlized} to the truncated problem \eqref{eq:uR} as $\delta \to 0$. The convergence analysis is based on the tightness argument in Lemma \ref{lem:tight}, combined with the Gyöngy–Krylov characterization of convergence in probability stated below (see also \cite[Lemma 1.1]{GK96}).
\begin{lemma}[Gyöngy--Krylov]\label{lem:GK} Let $\{Y_n\}_{n\in\mathbb{N}}$ be a sequence of $X$-valued random variables, where $X$ is a complete separable metric space. Then $\{Y_n\}_{n\in\mathbb{N}}$ converges in probability if and only if for every two subsequences $\{Y_{n_i}\}_{i\in\mathbb{N}}$ and $\{Y_{l_i}\}_{i\in\mathbb{N}}$, the joint sequence $\{(Y_{n_i}, Y_{l_i})\}_{i\in\mathbb{N}}$ has a further subsequence whose laws converge weakly to a probability measure $\nu$ supported on the diagonal of $X \times X$:
	$$
	\nu(\{(x, y) \in X \times X: x=y\})=1 .
	$$
\end{lemma}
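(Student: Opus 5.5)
This is the classical characterization of convergence in probability by Gyöngy and Krylov. I would prove it directly rather than cite \cite[Lemma 1.1]{GK96}. Throughout, $d$ denotes the metric of $X$. Separability of $X$ guarantees that the Borel $\sigma$-algebra of $X\times X$ is the product $\sigma$-algebra. Hence $(Y_n,Y_l)$ is an $X\times X$-valued random variable, and $d(Y_n,Y_l)$ is a real random variable.

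\emph{Necessity.} Suppose $Y_n\to Y$ in probability. For any two subsequences, $d\big((Y_{n_i},Y_{l_i}),(Y,Y)\big)\le d(Y_{n_i},Y)+d(Y_{l_i},Y)\to0$ in probability. Convergence in probability implies convergence in law, so the laws of $(Y_{n_i},Y_{l_i})$ converge weakly to the law $\nu$ of $(Y,Y)$. This $\nu$ is supported on the diagonal. No further subsequence is needed.

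\emph{Sufficiency, step 1: Cauchy in probability.} Argue by contradiction. If $\{Y_n\}$ is not Cauchy in probability, there are $\epsilon>0$ and indices $n_i,l_i\to\infty$ with
\begin{equation*}
\mathbb P\{d(Y_{n_i},Y_{l_i})\ge\epsilon\}\ge\epsilon\quad\text{for all } i.
\end{equation*}
By hypothesis, a further subsequence of $(Y_{n_i},Y_{l_i})$ has laws $\mu_i$ converging weakly to some $\nu$ with $\nu(\{x=y\})=1$. The set $F_\epsilon=\{(x,y):d(x,y)\ge\epsilon\}$ is closed in $X\times X$ and disjoint from the diagonal. The Portmanteau theorem then gives
\begin{equation*}
\epsilon\le\limsup_i\mu_i(F_\epsilon)\le\nu(F_\epsilon)=0,
\end{equation*}
which is a contradiction. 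Hence $\lim_{n,l\to\infty}\mathbb P\{d(Y_n,Y_l)>\epsilon\}=0$ for every $\epsilon>0$.

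\emph{Sufficiency, step 2: passing to a limit.} I expect this to be the only mildly delicate point, because it is where completeness of $X$ enters. Choose $n_1<n_2<\cdots$ such that
\begin{equation*}
\mathbb P\{d(Y_{n_{k+1}},Y_{n_k})>2^{-k}\}<2^{-k}.
\end{equation*}
By the Borel--Cantelli lemma, almost surely $d(Y_{n_{k+1}},Y_{n_k})\le2^{-k}$ for all large $k$. Thus $\{Y_{n_k}(\omega)\}$ is Cauchy in $X$ for almost every $\omega$. By completeness it converges to a limit $Y(\omega)$; set $Y$ equal to a fixed point of $X$ on the exceptional null set. As a pointwise a.s. limit of measurable maps into a metric space, $Y$ is measurable. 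Finally, for any $\epsilon>0$,
\begin{equation*}
\mathbb P\{d(Y_n,Y)>\epsilon\}\le\mathbb P\{d(Y_n,Y_{n_k})>\epsilon/2\}+\mathbb P\{d(Y_{n_k},Y)>\epsilon/2\}.
\end{equation*}
Letting $n,k\to\infty$, the first term vanishes by step 1 and the second by the almost sure convergence. Therefore $Y_n\to Y$ in probability.
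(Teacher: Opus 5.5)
Your proof is correct. The contradiction step (strictly increasing $n_i,l_i$ with $\mathbb P\{d(Y_{n_i},Y_{l_i})\ge\epsilon\}\ge\epsilon$, followed by Portmanteau on the closed off-diagonal set $\{d(x,y)\ge\epsilon\}$) and the completeness step (a Borel--Cantelli subsequence) are both sound, and separability correctly gives you the measurability you need. The paper gives no proof of its own and only cites \cite[Lemma 1.1]{GK96}; your argument is the standard proof from that reference, written out in self-contained form.
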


By \eqref{eq:reguarlized} and the integration by parts formula, for any real valued function $\varphi\in \oplus_{k=1}^m \mathcal{C}_c^\infty(J_k)$,
	\begin{align*}
	\langle \check{u}^\delta(t),\varphi\rangle_{L^2_{\beta^\delta\gamma}(\Gamma^R)}=\langle \check{u}(0),\varphi\rangle_{L^2_{\beta^\delta\gamma}(\Gamma^R)}
	-\frac12\int_0^t\sum_{k=1}^m\int_{J_k} \alpha_k^{R,\delta}\frac{\partial}{\partial z}\check{u}^\delta(s) \frac{\ud}{\ud z}(\varphi\gamma_k)\ud z\ud s\\ + \int_0^t \langle \check{B}(\check{u}^\delta(s)),\varphi\rangle_{L^2_{\beta^\delta\gamma}(\Gamma^R)}\ud s+\int_0^t\langle \check{G}(\check{u}^\delta(s))\ud W(s),\varphi\rangle_{L^2_{\beta^\delta\gamma}(\Gamma^R)}.
	\end{align*}
To identify the limiting equation, we reformulate
	\begin{align}\label{eq:udelta}
		\langle \check{u}^\delta(t),\varphi\rangle_{L_{\beta\gamma}^2(\Gamma^R)}=\langle \check{u}(0),\varphi\rangle_{L_{\beta\gamma}^2(\Gamma^R)}
	-\frac12\int_0^t\sum_{k=1}^m\int_{J_k} \alpha_k^{R}\frac{\partial}{\partial z}\check{u}^\delta(s) \frac{\ud}{\ud z}(\varphi\gamma_k)\ud z\ud s\\ \notag+ \int_0^t \langle \check{B}(\check{u}^\delta(s)),\varphi\rangle_{L_{\beta\gamma}^2(\Gamma^R)}\ud s
	+\int_0^t\langle \check{G}(\check{u}^\delta(s))\ud W(s),\varphi\rangle_{L_{\beta\gamma}^2(\Gamma^R)}+\mathrm{R}^{\varphi,\delta}(t;\check{u}^\delta),
	\end{align}
	where the remainder term is given by
	\begin{align*}
		\mathrm{R}^{\varphi,\delta}(t;\check{u}^\delta)&:=\sum_{k=1}^m\int_{J_k}\big(\check{u}^\delta(t)-\check{u}(0)\big)\varphi(\beta_k-\beta_k^\delta)\gamma_k\ud z\\
		&\quad+\frac12\int_0^t\sum_{k=1}^m\int_{J_k} \big(\alpha_k^R-\alpha_k^{R,\delta}\big)\frac{\partial}{\partial z}\check{u}^\delta(t)\frac{\ud}{\ud z}(\varphi\gamma_k)
		\ud z\ud s\\
		&\quad+\int_0^t\sum_{k=1}^m\int_{J_k}\check{B}(\check{u}^\delta(s))\varphi(\beta_k^\delta-\beta_k)\gamma_k\ud z\ud s\\
		&\quad+\int_0^t\langle \check{G}(\check{u}^\delta(s))\ud W(s),\varphi(\beta^\delta-\beta)\gamma \rangle_{L^2(\Gamma^R)}=:\sum_{i=1}^4\mathrm{R}_i^{\varphi,\delta}(t;\check{u}^\delta).
			\end{align*}
In the following lemma, we show that the remainder term $\mathrm{R}^{\varphi,\delta}(t;\check{u}^\delta)$ becomes negligible in the limit $\delta\to 0$.

\begin{lemma} \label{lem:R}
Let \eqref{eq:etaR} and Assumptions \ref{asp:reg}--\ref{asp:reg-gamma} hold. Assume that $1_\Gamma\in L^2_{\beta\gamma}(\Gamma)$ and $u_0\in W^{1,2}_{(\alpha+1)\gamma,\beta\gamma}(\Gamma)$.	 
Then for any real valued function $\varphi\in \oplus_{k=1}^m \mathcal{C}_c^\infty(J_k)$,
\begin{align*}
\lim_{\delta\to 0}\E\bigg[\sup_{t\in[0,T]}|\mathrm{R}^{\varphi,\delta}(t;\check{u}^\delta)|^2\bigg]=0.
\end{align*}
\end{lemma}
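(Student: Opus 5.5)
The plan is to treat the four pieces $\mathrm{R}_i^{\varphi,\delta}$ separately. Each one will be reduced to an estimate in which the only $\delta$-dependence sits in the coefficient differences $\beta_k-\beta_k^\delta$ and $\alpha_k^R-\alpha_k^{R,\delta}$, multiplied by something bounded uniformly in $\delta$. The key observation is that $\varphi\in\oplus_{k}\mathcal{C}_c^\infty(J_k)$ has compact support inside the open edges. Hence on $\mathrm{supp}\,\varphi$ the functions $\alpha_k^R,\beta_k,\gamma_k,\gamma_k'$ are bounded, and $\alpha_k^R$ and $\beta_k$ are bounded away from zero. Moreover, by Assumption \ref{asp:reg} the regularized coefficients converge pointwise and are dominated: $|\beta_k^\delta-\beta_k|\le(\mathfrak{c}_5+1)\beta_k$ and $|\alpha_k^{R,\delta}-\alpha_k^R|\le \mathfrak{c}_2(\alpha_k+1)+\alpha_k^R$. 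By dominated convergence we therefore get
\[
\epsilon_\beta(\delta):=\sum_k\int_{J_k}|\varphi|^2|\beta_k-\beta_k^\delta|^2\gamma_k^2\,\ud z\to0,
\qquad
\epsilon_\alpha(\delta):=\sum_k\int_{\mathrm{supp}\varphi}|\alpha_k^R-\alpha_k^{R,\delta}|^2\,\ud z\to0 .
\]
For the same reason, the weights $\alpha_k^{R,\delta},\beta_k^\delta$ are uniformly comparable to constants on $\mathrm{supp}\,\varphi$.

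\emph{Terms $\mathrm{R}_1$ and $\mathrm{R}_3$.} By Cauchy--Schwarz in $z$ (using the unweighted $L^2(\Gamma^R)$ norm, which is controlled by $L^2_{\beta^\delta\gamma}$ since $\beta^\delta\ge\mathfrak{c}_3$ and $\gamma\ge c_\gamma(R)$),
\[
\sup_t|\mathrm{R}_1|^2\le C\epsilon_\beta(\delta)\Big(\sup_t\|\check u^\delta(t)\|^2_{L^2_{\beta^\delta\gamma}}+\|\check u(0)\|^2_{L^2_{\beta^\delta\gamma}}\Big).
\]
Taking expectations and applying \eqref{eq:L2delta} gives $\E\sup_t|\mathrm{R}_1|^2\le C\epsilon_\beta(\delta)\to0$. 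The term $\mathrm{R}_3$ is handled identically, using the linear growth of $b$, $|\eta_R|\le1$, $1_\Gamma\in L^2_{\beta\gamma}$ and Cauchy--Schwarz in time.

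\emph{Term $\mathrm{R}_2$.} Here I read the integrand at time $s$. Since $\frac{\ud}{\ud z}(\varphi\gamma_k)$ is bounded and supported in $\mathrm{supp}\,\varphi$,
\[
|\mathrm{R}_2(t)|\le C\int_0^T\Big(\int_{\mathrm{supp}\varphi}|\alpha_k^R-\alpha_k^{R,\delta}|^2\Big)^{1/2}\Big(\int_{\mathrm{supp}\varphi}|\partial_z\check u^\delta(s)|^2\Big)^{1/2}\ud s .
\]
On $\mathrm{supp}\,\varphi$ we have $\alpha_k^{R,\delta}\gamma_k\ge\mathfrak{c}_0\alpha_k^R\gamma_k\ge c>0$. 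Hence $\E\sup_t|\mathrm{R}_2|^2\le C\epsilon_\alpha(\delta)\,\E\int_0^T\sum_k\int\alpha_k^{R,\delta}|\partial_z\check u^\delta|^2\gamma_k$, and the last expectation is uniformly bounded by \eqref{eq:L2delta}. Thus $\E\sup_t|\mathrm{R}_2|^2\le C\epsilon_\alpha(\delta)\to0$. Alternatively, one can invoke \eqref{eq:udeltaee}, but \eqref{eq:L2delta} is already sufficient.

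\emph{Term $\mathrm{R}_4$.} This is the stochastic integral, and it is the step needing the most care, because of the supremum in time. I would apply the Burkholder--Davis--Gundy (Doob) inequality to the real martingale $t\mapsto\mathrm{R}_4(t)$, which gives
\[
\E\sup_t|\mathrm{R}_4|^2\le C\,\E\int_0^T\sum_j\big|\langle g(\check u^\delta)\eta_R e_j,\varphi(\beta^\delta-\beta)\gamma\rangle_{L^2(\Gamma^R)}\big|^2\ud s ,
\]
with $e_j=(\widehat{\mathfrak{u}_j\mu})^\wedge$. By Cauchy--Schwarz in $z$ and \eqref{eq:e_i} (so that $\sum_j|e_j|^2\le\mu(\R^2)$ pointwise), the right-hand side is at most
\[
C\mu(\R^2)\,\epsilon_\beta(\delta)\,\E\int_0^T\int_{\Gamma^R}|g(\check u^\delta)|^2\,\ud z\,\ud s\le C\epsilon_\beta(\delta)\Big(1+\E\sup_t\|\check u^\delta\|^2_{L^2_{\beta^\delta\gamma}}\Big).
\]
This tends to $0$. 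Summing the four estimates then proves the lemma. The only delicate point is to verify the uniform domination that justifies $\epsilon_\alpha,\epsilon_\beta\to0$, which follows directly from the bounds in Assumption \ref{asp:reg} together with the compactness of $\mathrm{supp}\,\varphi$ inside the open edges.
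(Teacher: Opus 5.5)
Your proposal is correct and follows essentially the same route as the paper: Cauchy--Schwarz on each of the four remainder terms, the $\delta$-uniform energy bound \eqref{eq:L2delta}, Doob's inequality for $\mathrm{R}_4^{\varphi,\delta}$, and dominated convergence justified by the bounds in Assumption~\ref{asp:reg}. You also correctly read the integrand of $\mathrm{R}_2^{\varphi,\delta}$ at time $s$ rather than $t$. The differences are cosmetic: you pass through the unweighted $L^2(\Gamma^R)$ norm and use \eqref{eq:e_i} directly instead of the Hilbert--Schmidt linear growth of $\check G$, and you obtain the $\beta$-domination from compactness of $\mathrm{supp}\,\varphi$ rather than from square-integrability of the logarithmic singularity of $\beta_k$.
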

\begin{proof}
Invoking the Cauchy--Schwarz inequality, together with the uniform lower bound $\beta_k^\delta\ge \mathfrak{c}_3>0$, we have
\begin{align*}
|\mathrm{R}_1^{\varphi,\delta}(t;\check{u}^\delta)|^2\le \frac{1}{\mathfrak{c}_3}\left(\|\check{u}^\delta(t)\|_{L^2_{\beta^\delta \gamma}(\Gamma^R)}^2+\|\check{u}(0)\|_{L^2_{\beta^\delta \gamma}(\Gamma^R)}^2\right)\sum_{k=1}^m\int_{J_k}\varphi^2|\beta_k-\beta_k^\delta|^2\gamma_k\ud z
\end{align*}
Similarly, from the linear growth of $\check B: L^2_{\beta\gamma}(\Gamma^R)\to L^2_{\beta\gamma}(\Gamma^R)$, we can readily obtain 
\begin{align*}
|\mathrm{R}_3^{\varphi,\delta}(t;\check{u}^\delta)|^2\le \frac{C}{\mathfrak{c}_3}\int_0^T\Big(1+\|\check{u}^\delta(s)\|_{L^2_{\beta^\delta \gamma}(\Gamma^R)}^2\Big)\ud s\sum_{k=1}^m\int_{J_k}\varphi^2|\beta_k-\beta_k^\delta|^2\gamma_k\ud z.
\end{align*}
Next, by Doob's martingale inequality and the linear growth of $\check G: L^2_{\beta\gamma}(\Gamma^R)\to \mathscr{L}_2(\mathcal{U}_0,L^2_{\beta\gamma}(\Gamma^R))$, 
\begin{align*}
\E\left[\sup_{t\in[0,T]}|\mathrm{R}_4^{\varphi,\delta}(t;\check{u}^\delta)|^2\right]&\le C\E\int_0^T\sum_{i=0}^\infty\langle \check{G}(\check{u}^\delta(s))e_i,\varphi(\beta^\delta-\beta)\gamma \rangle_{L^2(\Gamma^R)}^2 \ud s\\
&\le \frac{C}{\mathfrak{c}_3}\E\int_0^T\Big(1+\|\check{u}^\delta(s)\|_{L^2_{\beta^\delta \gamma}(\Gamma^R)}^2\Big)\ud s\sum_{k=1}^m\int_{J_k}\varphi^2|\beta_k-\beta_k^\delta|^2\gamma_k\ud z.
\end{align*}
Consequently, for $i=1,3,4$ we employ Lemma \ref{lem:analydelta} to infer that
\begin{align*}
\E\left[\sup_{t\in[0,T]}|\mathrm{R}_i^{\varphi,\delta}(t;\check{u}^\delta)|^2\right]
&\le C\E\Big(1+\sup_{s\in[0,T]}\|\check{u}^\delta(s)\|_{L^2_{\beta^\delta \gamma}(\Gamma^R)}^2\Big)\sum_{k=1}^m\int_{J_k}\varphi^2|\beta_k-\beta_k^\delta|^2\gamma_k\ud z\\
&\le C\sum_{k=1}^m\int_{J_k}\varphi^2|\beta_k-\beta_k^\delta|^2\gamma_k\ud z.
\end{align*}
From $\beta_k^\delta \le \mathfrak{c}_5 \beta_k$ it follows that for all $\delta \in (0,\delta_0)$,
$
\varphi^2 \lvert \beta_k - \beta_k^\delta \rvert^2 \gamma_k
\le 2\bigl(1 + \mathfrak{c}_5^2\bigr)\varphi^2 \lvert \beta_k \rvert^2 \gamma_k,
$
where the right-hand side is integrable over $\Gamma^R$. Together with the pointwise convergence of
$\beta_k^\delta$ to $\beta_k$ as $\delta \to 0$ in Assumption \ref{asp:reg}, the Lebesgue dominated convergence theorem implies
\begin{align*}
\lim_{\delta \to 0}
\sum_{k=1}^m \int_{J_k}
\varphi^2 \lvert \beta_k - \beta_k^\delta \rvert^2 \gamma_k \ud z
= 0 .
\end{align*}
As for $\mathrm{R}_2^{\varphi,\delta}$, using the Cauchy--Schwarz inequality again, 
\begin{align*}
|\mathrm{R}_2^{\varphi,\delta}(t;\check{u}^\delta)|^2\le T\int_0^T\big\|\frac{\partial}{\partial z} \check{u}^\delta(s)\big\|_{L^2_{\alpha^{R,\delta}\gamma}(\Gamma^R)}^2\ud s
\left(\sum_{k=1}^m\int_{J_k} |\alpha_k^R-\alpha_k^{R,\delta}|^2|\frac{\ud}{\ud z}(\varphi\gamma_k)|^2\gamma_k^{-1}(\alpha^{R,\delta}_k)^{-1}
		\ud z\right).
\end{align*}
Since $\varphi$ has compact support on each edge $J_k$, by \eqref{eq:delta1}, the function $\alpha^{R,\delta}\ge \mathfrak{c}_0\alpha_k^R$ admits a positive lower bound on $\text{supp}(\varphi)$ uniformly in $\delta\in(0,\delta_0)$. Thus, according to Lemma \ref{lem:analydelta} and \eqref{eq:gammalowupp}, 
\begin{align*}
\E\bigg[\sup_{t\in[0,T]}|\mathrm{R}_2^{\varphi,\delta}(t;\check{u}^\delta)|^2\bigg]&\le C
\sum_{k=1}^m\int_{J_k} |\alpha_k^R-\alpha_k^{R,\delta}|^2|\frac{\ud}{\ud z}(\varphi\gamma_k)|^2\gamma_k^{-1}(\alpha^{R,\delta}_k)^{-1}
		\ud z\\
		&\le C(R,\phi)\sum_{k=1}^m\int_{J_k} |\alpha_k^R-\alpha_k^{R,\delta}|^2
		\ud z.
\end{align*}
The right-hand side converges to zero as $\delta\to0$ by the dominated convergence theorem, and the pointwise convergence of $\alpha_k^\delta$ to $\alpha_k$ stated in Assumption \ref{asp:reg}.
\end{proof}

With Lemma \ref{lem:R} at hand, we are now ready to present the proof of Theorem \ref{tho:con-in-pro}.

\textbf{Proof of Theorem \ref{tho:con-in-pro}.}
By the tightness of the family $\{\mathbb{P}\circ(\check{u}^\delta)^{-1}\}_{\delta\in(0,\delta_0)}$ in $\mathcal{C}([0,T];L_{\beta\gamma}^2(\Gamma^R))$, the Skorokhod theorem yields that for any two sequences $\{\delta_n\}_{n\in\mathbb{N}}\subset(0,\delta_0)$ and $\{\lambda_n\}_{n\in\mathbb{N}}\subset(0,\delta_0)$ converging to $0$, there exist subsequences $\{\delta_{n(i)}\}_{i\in\mathbb{N}}$ and $\{\lambda_{n(i)}\}_{i\in\mathbb{N}}$ and a sequence of random variables
$\rho_i=\{(w_1^i,w_2^i,\mathcal{W}^i)\}_{i\in\mathbb{N}}$ 
 taking values in
 $\mathcal{C}_1:=\mathcal{C}([0,T];L_{\beta\gamma}^2(\Gamma^R))^2\times\mathcal{C}([0,T];\mathcal{D}'(\Gamma))$, defined on some probability space $(\bar{\Omega},\bar{\mathscr{F}},\bar{\mathbb{P}})$ such that the law of $\rho_i$ coincides with that of $(\check{u}^{\delta_{n(i)}},\check{u}^{\lambda_{n(i)}},W)$ for each $i\in\mathbb{N}$, and that $\rho_i$ converges $\bar{\mathbb{P}}$-a.s. to a $\mathcal{C}_1$-valued random element $\rho:=(w_1,w_2,\mathcal{W})$. Here $\mathcal{D}'(\Gamma)$ is the dual space of $\mathcal{C}_0^\infty(\Gamma)$.
  For both $j=1$ and $j=2$, $w_j^i$ solves \eqref{eq:udelta} with $(W,\mathrm{R}^{\varphi,\delta}(t;\check{u}^\delta))$ replaced by $(\mathcal{W}^i,\mathrm{R}^{\varphi,\delta}(t;w_j^i))$, i.e.,
 	\begin{align}\label{eq:udelta-Pdelta}
		\langle w_j^i(t),\varphi\rangle_{L_{\beta\gamma}^2(\Gamma^R)}=\langle \check{u}(0),\varphi\rangle_{L_{\beta\gamma}^2(\Gamma^R)}
	-\frac12\int_0^t\sum_{k=1}^m\int_{J_k}\alpha_k^R \frac{\partial}{\partial z}w_j^i(s)\frac{\ud}{\ud z}(\varphi\gamma_k)\ud z\ud s\\ \notag+ \int_0^t \langle \check{B}(w_j^i(s)),\varphi\rangle_{L_{\beta\gamma}^2(\Gamma^R)}\ud s
	+\int_0^t\langle \check{G}(w_j^i(s))\ud \mathcal{W}^i(s),\varphi\rangle_{L_{\beta\gamma}^2(\Gamma^R)}+\mathrm{R}^{\varphi,\delta}(t;w_j^i).
	\end{align} 
According to Lemma \ref{lem:R}, both $\mathrm{R}^{\varphi,\delta}(\cdot;w_1^i)$ and $\mathrm{R}^{\varphi,\delta}(\cdot;w_2^i)$ converges in $L^2(\bar{\Omega};\mathcal{C}([0,T]))$ to $0$, as $\delta_{n(i)}$ and $\lambda_{n(i)}$ tends to $0$. Hence, we can take further subsequences of $\{\delta_{n(i)}\}_{i\in\mathbb{N}}$ and $\{\lambda_{n(i)}\}_{i\in\mathbb{N}}$, still denoted by $\{\delta_{n(i)}\}_{i\in\mathbb{N}}$ and $\{\lambda_{n(i)}\}_{i\in\mathbb{N}}$ for simplicity, such that the corresponding remainder terms $\mathrm{R}^{\varphi,\delta}(t;w_1^i)$ and $\mathrm{R}^{\varphi,\delta}(t;w_2^i)$ converges $\bar{\mathbb{P}}$-a.s. to $0$ as $i\to\infty$.
Passing to the limit as $i\to \infty$ on both sides of \eqref{eq:udelta-Pdelta},
 we infer that for $j=1,2$,
  	\begin{align}\label{eq:udelta-P}
		\langle w_j(t),\varphi\rangle_{L_{\beta\gamma}^2(\Gamma^R)}=\langle \check{u}(0),\varphi\rangle_{L_{\beta\gamma}^2(\Gamma^R)}
	-\frac12\int_0^t\sum_{k=1}^m\int_{J_k}\alpha_k^R \frac{\partial}{\partial z}w_j(s)\frac{\ud}{\ud z}(\varphi\gamma_k)\ud z\ud s\\ \notag+ \int_0^t \langle \check{B}(w_j(s)),\varphi\rangle_{L_{\beta\gamma}^2(\Gamma^R)}\ud s
	+\int_0^t\langle \check{G}(w_j(s))\ud \mathcal{W}(s),\varphi\rangle_{L_{\beta\gamma}^2(\Gamma^R)}.
	\end{align} 
By the pathwise uniqueness of the solution to \eqref{eq:udelta-P}, we obtain that $w_1=w_2$, $\bar{\mathbb{P}}$-a.s.
	Due to Lemma \ref{lem:GK}, this proves that the family $\{\check{u}^\delta\}_{\delta\in(0,\delta_0)}$ of random variables converges in probability to some random variable $\check{u}^0\in \mathcal{C}([0,T];L_{\beta\gamma}^2(\Gamma^R))$. We can find a subsequence of $\{\check{u}^\delta\}_{\delta\in(0,\delta_0)}$ converging $\mathbb{P}$-a.s. to $\check{u}^0$.
    This, along with \eqref{eq:udelta} and Lemma \ref{lem:R}, yields that $\check{u}^0$ also satisfies \eqref{eq:udelta-P} with $\mathcal{W}$ replaced by $W$. This identifies that $\check{u}^0=\check{u}$, by the pathwise uniqueness of \eqref{eq:local}. In other words, for any $\epsilon>0$, 
\begin{equation}\label{eq:con-in-pro}
\lim_{\delta\to 0}\mathbb{P}\left\{\|\check{u}^\delta-\check{u}\|_{\mathcal{C}([0,T];L_{\beta\gamma}^2(\Gamma^R))}>\epsilon\right\}=0.
\end{equation}

Due to the Vatali convergence theorem, for any fixed $p\ge1$, the family $\{\check{u}^\delta\}_{\delta\in(0,\delta_0)}$ converges in $L^p(\Omega;\mathcal{C}([0,T];L_{\beta\gamma}^2(\Gamma^R)))$ to $\check{u}$ as $\delta\to0$ if and only if $\{\check{u}^\delta\}_{\delta\in(0,\delta_0)}$ converges in probability to $\check{u}$ as $\delta\to0$ and the family $\{\|\check{u}^\delta\|_{\mathcal{C}([0,T];L_{\beta\gamma}^2(\Gamma^R))}^p\}_{\delta\in(0,\delta_0)}$ is uniformly integrable.   
Owing to Lemma \ref{lem:rHolder},  \eqref{eq:gammalowupp}, and $r^\delta(0)=0$, we obtain that for any $q> 2$, 
\begin{align}\label{eq:rdeltaee-new}
\E\bigg[\sup_{t\in[0,T]}\|r^\delta(t)\|_{L_{\beta\gamma}^2(\Gamma^R)}^q\bigg]\le C\E\bigg[\sup_{t\in[0,T]}\|r^\delta(t)-r^\delta(0)\|_{L_{\beta}^2(\Gamma^R)}^q\bigg]\le C(R,q,T).
\end{align}
Since $\check{u}^\delta=v^\delta+r^\delta$, combining \eqref{eq:rdeltaee-new} with Lemma \ref{lem:uHolder} gives that
\begin{align*}
\sup_{t\in[0,T]}\|\check{u}^\delta(t)\|_{L_{\beta\gamma}^2(\Gamma^R)}\le \sup_{t\in[0,T]}\|v^\delta(t)\|_{W^{1,2}_{\alpha^{R}\gamma,\beta\gamma}(\Gamma^R)}+\sup_{t\in[0,T]}\|r^\delta(t)\|_{L_{\beta\gamma}^2(\Gamma^R)}
\end{align*}
is uniformly bounded in $L^{q}(\Omega)$ for any $q>p$ with respect to $\delta\in(0,\delta_0)$. In particular, the family $\{\|\check{u}^\delta\|_{\mathcal{C}([0,T];L_{\beta\gamma}^2(\Gamma^R))}^p\}_{\delta\in(0,\delta_0)}$ is uniformly integrable, which, in combination with \eqref{eq:con-in-pro}, allows us to conclude the proof of Theorem \ref{tho:con-in-pro}.	\hfill$\square$
\section{Error analysis of finite element approximation}\label{S:FEM}
In this section, we prove Theorem \ref{tho:FEM} on the error analysis of the finite element method for the truncated regularized problem \eqref{eq:reguarlized}. 
We emphasize that the generic constant $C$ in this section may depend on $R$ and $\delta$, but is independent of $h$.
To simplify the notation, henceforth we denote 
\begin{align*}
\check{L}_\delta^{2}:=L^2_{\beta^\delta\gamma}(\Gamma^R),\quad \check{W}_\delta^{1,2}:=W^{1,2}_{\alpha^{R,\delta}\gamma,\beta^\delta\gamma}(\Gamma^R).
\end{align*}
By repeating the proof of  Lemma \ref{lem:analydelta}, we have the following lemma.
\begin{lemma}\label{lem:analydeltah}
Let Assumptions \ref{asp:reg}--\ref{asp:reg-gamma} hold, $R> H_0$, and $\delta\in(0,\delta_0)$. Then we have the following.
\begin{enumerate}
\item[(1)] For any $\lambda\ge\frac18\kappa_1$, $-\lambda I+\check{\mathcal{L}}^\delta_h$ generates an analytic contraction semigroup on $\mathbb{V}_h(\Gamma^R)$ such that for any $t>0$,
\begin{equation*}
\|e^{t\check{\mathcal{L}}^\delta_h}\|_{\mathscr{L}(\check{L}_\delta^{2})}\le e^{\frac18\kappa_1t}.
\end{equation*}

\item[(2)] If $\{1_{\Gamma},u_0\}\subset L^2_{\beta\gamma}(\Gamma)$, then the discrete regularized truncated equation \eqref{eq:udeltah} admits a unique mild solution such that
for any $R>H_0$ and $\delta\in(0,\delta_0)$,
\begin{equation}\label{eq:L2delta}
	\E\bigg[\sup_{t\in[0,T]}\|\check{u}_h^\delta(t)\|^2_{\check{L}_\delta^{2}}\bigg]\le C\Big(1+\|u_0\|_{L^2_{\beta\gamma}(\Gamma)}^2\Big),
\end{equation}
where $C:=C(\kappa_1,T)>0$ is independent of $\delta$ and $R$.
\end{enumerate}
\end{lemma}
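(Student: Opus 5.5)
The plan is to transfer the continuous argument of Lemma \ref{lem:analydelta} to the finite dimensional space $\mathbb{V}_h(\Gamma^R)$, equipped with the inner product of $\check{L}_\delta^{2}$. The first step establishes a discrete G\aa rding inequality. Since $\mathbb{V}_h(\Gamma^R)\subset W^{1,2}_{\mathrm{cont}}(\Gamma^R)$, the coercivity bound \eqref{eq:coe-delta} and the continuity bound \eqref{eq:cont-delta} hold for all $\phi,\psi\in\mathbb{V}_h(\Gamma^R)$, with the same constants. These constants depend only on $\lambda$ and $\kappa_1$, and they come from Assumption \ref{asp:reg-gamma} via the Young inequality argument of \eqref{eq:lowphipsi}--\eqref{eq:lower}. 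By the definition \eqref{eq:Lhdelta},
\[
\Re\langle(\lambda I-\check{\mathcal{L}}^\delta_h)\phi,\phi\rangle_{\check{L}_\delta^{2}}=\Re a^{R,\delta}_\lambda(\phi,\phi)\ge 0,\qquad \lambda\ge\tfrac18\kappa_1 ,
\]
so $\check{\mathcal{L}}^\delta_h-\lambda I$ is dissipative on $\mathbb{V}_h(\Gamma^R)$. This gives $\|e^{t(\check{\mathcal{L}}^\delta_h-\lambda I)}\|\le1$, and hence the bound $e^{\frac18\kappa_1 t}$. Analyticity is automatic because the operator is bounded on a finite dimensional space. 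More usefully, the numerical range of $\lambda I-\check{\mathcal{L}}^\delta_h$ lies in the same sector $\Sigma_0(\theta_0)$ as in Lemma \ref{lem:smo}, with $\theta_0$ independent of $h$. The resolvent estimate \eqref{eq:analytic} therefore holds uniformly in $h$ (as in \cite[Proposition C.3.1]{HM06}), and I will record this because later estimates need it. Note that the semigroup norm is taken on $\mathbb{V}_h$ with the $\check{L}_\delta^{2}$-norm, which is how $\mathscr{L}(\check{L}_\delta^{2})$ should be read here.

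The second step is well-posedness. The maps $\mathcal{P}_h\check B$ and $\mathcal{P}_h\check G$ are Lipschitz on $\mathbb{V}_h$ with respect to $\check{L}_\delta^{2}$, with constants independent of $h$, $\delta$ and $R$. This holds because $\mathcal{P}_h$ is the $\check{L}_\delta^{2}$-orthogonal projection and so has norm at most $1$, and because $\check B,\check G$ are Lipschitz in $\check{L}_\delta^{2}$: they are pointwise Nemytskii operators, and \eqref{eq:e_i} together with $|\eta_R|\le1$ applies with any weight. Linear growth follows from $1_\Gamma\in L^2_{\beta\gamma}(\Gamma)$ and $\beta^\delta_k\le\mathfrak c_5\beta_k$, since these give $\|1_{\Gamma^R}\|_{\check{L}_\delta^{2}}\le C$. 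Equation \eqref{eq:udeltah} is then a finite dimensional SDE with Lipschitz coefficients, so it has a unique (mild, equivalently strong) solution by \cite[Theorem 7.5]{DZ14} or classical SDE theory.

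The third step is the moment bound. For the initial datum, $\|\mathcal{P}_h\check u(0)\|_{\check{L}_\delta^{2}}\le\|\check u(0)\|_{\check{L}_\delta^{2}}\le C\|u_0\|_{L^2_{\beta\gamma}(\Gamma)}$, again using $\beta_k^\delta\le\mathfrak c_5\beta_k$. I then apply It\^o's formula to $\|\check u^\delta_h(t)\|^2_{\check{L}_\delta^{2}}$. The drift contributes $-2\Re a_0^{R,\delta}(\check u_h^\delta,\check u_h^\delta)\le\frac14\kappa_1\|\check u_h^\delta\|^2_{\check{L}_\delta^{2}}$ by \eqref{eq:lower} with $\epsilon=1$. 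The projections can be dropped in the nonlinear terms because $\langle\mathcal{P}_h f,v_h\rangle=\langle f,v_h\rangle$ for $v_h\in\mathbb{V}_h$ and $\|\mathcal{P}_h\check G\|_{\mathscr L_2}\le\|\check G\|_{\mathscr L_2}$, so the remaining terms are bounded by $C(1+\|\check u_h^\delta\|^2_{\check{L}_\delta^{2}})$. To control the supremum in time, I treat the martingale term with the Burkholder--Davis--Gundy inequality and Young's inequality, absorbing half of $\E\sup_t\|\check u^\delta_h\|^2$ into the left-hand side. Gronwall's inequality then yields \eqref{eq:L2delta} with $C=C(\kappa_1,T)$.

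The main point needing care is the uniformity of the constants in $\delta$, $R$ and $h$. Every constant must be traced to $\kappa_1$, the Lipschitz constants of $b,g$, $\mu(\mathbb R^2)$, $\mathfrak c_5$ and $\|1_\Gamma\|_{L^2_{\beta\gamma}(\Gamma)}$, and never to the ellipticity constants $\mathfrak c_1(\delta),\mathfrak c_4(\delta)$. The estimates above never use those ellipticity constants, so I expect this to go through.
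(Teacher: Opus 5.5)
Your proposal is correct and matches the paper's argument. The paper just says to repeat the proof of Lemma \ref{lem:analydelta}: restrict the coercivity and continuity bounds \eqref{eq:coe-delta}--\eqref{eq:cont-delta} of $a^{R,\delta}_\lambda$ to $\mathbb{V}_h(\Gamma^R)\subset W^{1,2}_{\mathrm{cont}}(\Gamma^R)$, then run the It\^o--Gronwall argument of Lemmas \ref{lem:L2}--\ref{lem:H1}, with constants depending only on $\kappa_1$, $\mathfrak{c}_5$, $\|1_\Gamma\|_{L^2_{\beta\gamma}(\Gamma)}$ and the Lipschitz data, exactly as you do. One trivial slip: \eqref{eq:lower} with $\epsilon=1$ gives the drift bound $\tfrac12\kappa_1\|\check u_h^\delta\|^2_{\check{L}_\delta^{2}}$, while your $\tfrac14\kappa_1$ corresponds to $\epsilon=2$; the difference is immaterial.
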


Fix a $\lambda>\frac{1}{8}\kappa_1$ with $\kappa_1$ as in Assumption \ref{asp:reg-gamma}, and let $0<\theta_0<\theta_1<\frac{\pi}{2}$ be the same as in the proof of Lemma \ref{lem:smo}. As we have already shown in \eqref{eq:analytic}, the resolvent set $\rho(-\check{\mathcal{L}}^\delta)$ of $-\check{\mathcal{L}}^\delta$ contains the sector $\Sigma_\pi^\lambda(\theta_1):=\{y\in\mathbb{C}:\theta_1\le |\arg (y+\lambda)|\le \pi\}$ and  
$$\|(yI+\check{\mathcal{L}}^\delta)^{-1}\|_{\mathscr{L}(\check{L}_\delta^{2})}\le\frac{1}{\sin(\theta_1-\theta_0)} \frac{1}{|y+\lambda|}\quad \forall~y\in \Sigma_{\pi}^\lambda(\theta_1).$$ We shall use the contour $\mathcal{Y}_\lambda:=\{y\in \C:y=-\lambda+r e^{ \pm\mathbf{i}\theta_1},r\ge 0\}$. Then
\begin{equation}\label{eq:L-Lh}
	E_h(t)\varphi:=e^{t\check{\mathcal{L}}^\delta}\varphi-e^{t\check{\mathcal{L}}^\delta_h}\mathcal{P}_h\varphi=\frac{1}{2\pi \mathbf{i}}\int_{\mathcal{Y}_\lambda}e^{-y t}(w-w_h) \ud y,
	\end{equation}
	where $w=(yI+\check{\mathcal{L}}^\delta)^{-1}\varphi$ and $w_h=(yI+\check{\mathcal{L}}^\delta_h)^{-1}\mathcal{P}_h\varphi$.

    \begin{figure}[htbp]
    \centering
    \includegraphics[width=0.5\textwidth]{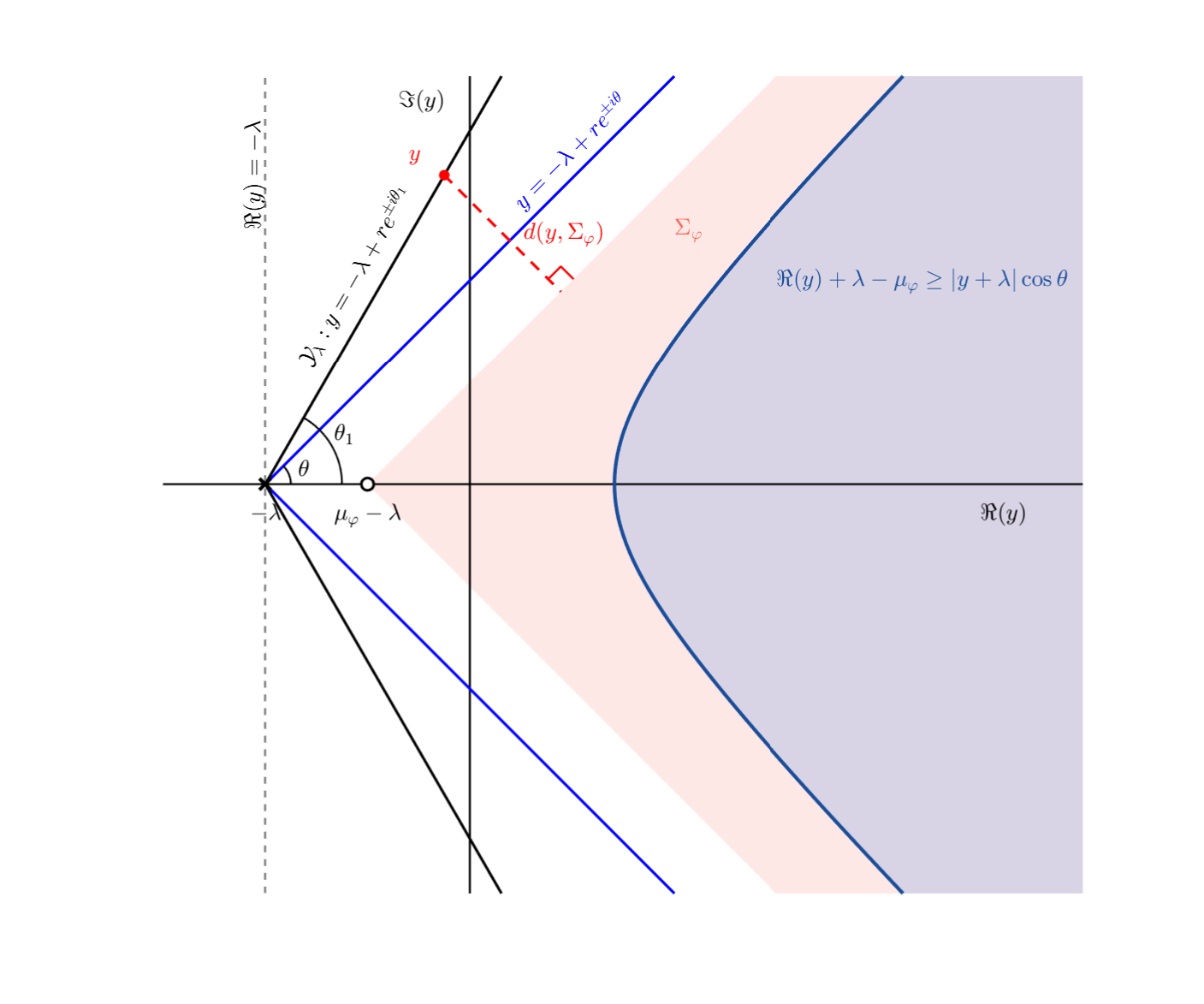}
    \caption{Distance between $y\in\mathcal{Y}_\lambda$ and $\Sigma_\varphi$ in Lemma \ref{lem1}.}
    \label{fig:contour}
\end{figure}
\begin{lemma}\label{lem1}
	Let Assumption \ref{asp:reg-gamma} hold and $\lambda>\frac18\kappa_1$. Then there exists $C:=C(\lambda)>0$ 
	such that for any $R>H_0$, $\delta\in(0,\delta_0)$, $\varphi\in W^{1,2}_{\mathrm{cont}}(\Gamma^R)$, and $y\in \mathcal{Y}_\lambda$,
	\begin{align*}
		|y+\lambda|\|\varphi\|_{\check{L}_\delta^{2}}^2 +\|\varphi\|^2_{\check{W}_\delta^{1,2}}\le C\big|y \|\varphi\|_{\check{L}_\delta^{2}}^2-a_0^{R,\delta}(\varphi,\varphi)\big|.
	\end{align*}
\end{lemma}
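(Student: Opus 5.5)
Set $q:=a_0^{R,\delta}(\varphi,\varphi)$ and $\|\varphi\|:=\|\varphi\|_{\check{L}_\delta^{2}}$. Since $a_\lambda^{R,\delta}(\varphi,\varphi)=\lambda\|\varphi\|^2+q$, the quantity to be bounded below can be rewritten as
\[
y\|\varphi\|^2-q=(y+\lambda)\|\varphi\|^2-a_\lambda^{R,\delta}(\varphi,\varphi).
\]
The plan is to show that $w:=a_\lambda^{R,\delta}(\varphi,\varphi)$ lies in a fixed sector around the positive real axis, and that $(y+\lambda)\|\varphi\|^2$ lies on the contour rays, which make a strictly larger angle with the positive real axis. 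An elementary separation estimate for two points in such well-separated cones then gives the result.

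\textbf{Step 1 (sector for $w$).} By the coercivity \eqref{eq:coe-delta} and continuity \eqref{eq:cont-delta} of $a_\lambda^{R,\delta}$ (the constants $c(\lambda)$, $C(\lambda)$ depend only on $\lambda$ and $\kappa_1$, hence are uniform in $R$ and $\delta$), we have
\[
\Re w\ge c(\lambda)\|\varphi\|^2_{\check{W}_\delta^{1,2}}, \qquad |w|\le C(\lambda)\|\varphi\|^2_{\check{W}_\delta^{1,2}} .
\]
Consequently $|\arg w|\le\theta_0=\arccos(c(\lambda)/C(\lambda))$, exactly as in the proof of Lemma \ref{lem:smo}, and $|w|\ge c(\lambda)\|\varphi\|^2_{\check{W}_\delta^{1,2}}$.

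\textbf{Step 2 (separation on the contour).} For $y\in\mathcal{Y}_\lambda$ we have $y+\lambda=re^{\pm\mathbf{i}\theta_1}$, so $\zeta:=(y+\lambda)\|\varphi\|^2$ has argument $\pm\theta_1$, while $w$ lies in the closed sector $\Sigma_0(\theta_0)$ with $\theta_0<\theta_1$. For two complex numbers whose arguments differ by at least $\theta_1-\theta_0=:\epsilon>0$, the law of cosines gives
\[
|\zeta-w|^2\ge|\zeta|^2+|w|^2-2|\zeta||w|\cos\epsilon\ge(1-\cos\epsilon)(|\zeta|^2+|w|^2),
\]
and hence
\[
|\zeta-w|\ge c_\epsilon\,(|\zeta|+|w|), \qquad c_\epsilon:=\sqrt{(1-\cos\epsilon)/2}.
\]
This is the geometric picture of Fig.~\ref{fig:contour}. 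One checks that the angle difference is indeed at least $\epsilon$: if $\arg\zeta=\theta_1$ and $|\arg w|\le\theta_0$, the difference lies in $[\theta_1-\theta_0,\theta_1+\theta_0]\subset[\epsilon,\pi)$, so the cosine of the difference is at most $\cos\epsilon$. The case $\arg\zeta=-\theta_1$ is symmetric.

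\textbf{Step 3 (conclusion).} Combining Steps 1 and 2,
\[
\big|y\|\varphi\|^2-q\big|=|\zeta-w|\ge c_\epsilon\big(|y+\lambda|\,\|\varphi\|^2+c(\lambda)\|\varphi\|^2_{\check{W}_\delta^{1,2}}\big),
\]
which is the claim with $C=1/\big(c_\epsilon\min\{1,c(\lambda)\}\big)$, depending only on $\lambda$ through $\theta_0,\theta_1$.

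The only delicate point is keeping the angle bookkeeping uniform in $R$ and $\delta$. This reduces to the fact that the constants $c(\lambda)$ and $C(\lambda)$ in \eqref{eq:coe-delta}--\eqref{eq:cont-delta} come solely from Assumption \ref{asp:reg-gamma} and Young's inequality, as in \eqref{eq:lower} and \eqref{eq:aconti}. Since $\theta_1$ is fixed in $(\theta_0,\pi/2)$, the constant $C$ is indeed independent of $R$ and $\delta$.
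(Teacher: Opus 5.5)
Your proof is correct. It rests on the same sector-separation idea as the paper, but you extract the $\check{W}_\delta^{1,2}$ term by a different mechanism.

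The paper's route:
\begin{itemize}
\item It picks an intermediate angle $\theta\in(\theta_0,\theta_1)$ and sets $\ell=c(\lambda)-C(\lambda)\cos\theta>0$.
\item It shows $\Re a_\lambda^{R,\delta}(\varphi,\varphi)-\ell\|\varphi\|^2_{\check{W}_\delta^{1,2}}\ge\cos\theta\,|a_\lambda^{R,\delta}(\varphi,\varphi)|$. This places the normalized value $a_0^{R,\delta}(\varphi,\varphi)/\|\varphi\|^2_{\check{L}_\delta^2}$ in a sector $\Sigma_\varphi$ of half-angle $\theta$ with vertex at $-\lambda+\mu_\varphi$, where $\mu_\varphi=\ell\|\varphi\|^2_{\check{W}_\delta^{1,2}}/\|\varphi\|^2_{\check{L}_\delta^2}$.
\item It then reads off $\mathrm{dist}(y,\Sigma_\varphi)\ge|y+\lambda|\sin(\theta_1-\theta)+\mu_\varphi\sin\theta$ from the geometry (Fig.~\ref{fig:contour}).
\end{itemize}
So in the paper the energy norm enters as a rightward shift of the vertex of the numerical range.

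You instead keep the unshifted sector $\Sigma_0(\theta_0)$ and use the symmetric law-of-cosines bound $|\zeta-w|\ge c_\epsilon(|\zeta|+|w|)$. You then recover the energy norm from $|w|\ge\Re w\ge c(\lambda)\|\varphi\|^2_{\check{W}_\delta^{1,2}}$.

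What each route buys:
\begin{itemize}
\item Yours is shorter and self-contained. It needs no auxiliary angle $\theta$ and no distance-to-a-shifted-sector estimate justified from a picture.
\item The paper's makes the role of coercivity as a shift of the numerical range explicit, and produces the constants $\sin(\theta_1-\theta)$ and $\ell\sin\theta$ directly.
\end{itemize}
In both, uniformity in $R$ and $\delta$ comes only from $c(\lambda)$ and $C(\lambda)$ depending on $\lambda$ and $\kappa_1$.

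Two small points to state explicitly. The step $|\zeta|^2+|w|^2-2|\zeta||w|\cos\epsilon\ge(1-\cos\epsilon)(|\zeta|^2+|w|^2)$ is equivalent to $\cos\epsilon\,(|\zeta|-|w|)^2\ge0$, so it uses $\cos\epsilon\ge0$; this holds because $\epsilon=\theta_1-\theta_0<\pi/2$. The degenerate cases $\zeta=0$ (that is, $y=-\lambda$) or $w=0$, where arguments are undefined, are trivial since $c_\epsilon\le1$.
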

\begin{proof}
Recall that $\theta_0=\arccos(c(\lambda)/C(\lambda))$ and $\theta_1\in (\theta_0,\frac{\pi}{2})$, where  $c(\lambda)$ and $C(\lambda)$ are the constants given in \eqref{eq:coe-delta} and \eqref{eq:cont-delta}, respectively.
Choose $\theta\in(\theta_0,\theta_1)$ and set $c_1:=C(\lambda)\cos\theta$. If we set $\ell:=c(\lambda)-c_1>0$, then by \eqref{eq:coe-delta} and \eqref{eq:cont-delta},
\begin{align*}
\Re a_{0}^{R,\delta}(\varphi,\varphi)+\lambda\|\varphi\|_{\check{L}_\delta^{2}}^2-\ell \|\varphi\|_{\check{W}_\delta^{1,2}}^2
\ge c_1\|\varphi\|_{\check{W}_\delta^{1,2}}^2
\ge \cos\theta \big|a_{0}^{R,\delta}(\varphi,\varphi)+\lambda\|\varphi\|_{\check{L}_\delta^{2}}^2\big|.
\end{align*}
Dividing both sides by $\|\varphi\|_{\check{L}_\delta^{2}}$ and by denoting $\mu_\varphi:=\ell\|\varphi\|_{\check{W}_\delta^{1,2}}^2/\|\varphi\|^2_{\check{L}_\delta^{2}}$, we have (see Fig.~\ref{fig:contour}),
\begin{align*}
\frac{a_{0}^{R,\delta}(\varphi,\varphi)}{\|\varphi\|_{\check{L}_\delta^{2}}}\in \Sigma_\varphi:=\left\{y\in\C:|\arg(y+\lambda-\mu_\varphi)|\le \theta\right\}.
\end{align*}
Notice that
for $y\in \mathcal{Y}_\lambda$ (see Fig.~\ref{fig:contour}), 
\begin{equation*}
\textup{dist}(y,\Sigma_\varphi)\ge |y+\lambda|\sin(\theta_1-\theta)+\sin(\theta)\mu_\varphi.
\end{equation*}
Consequently, for any $y\in\mathcal{Y}_\lambda$, 
\begin{align*}
\big|y \|\varphi\|_{\check{L}_\delta^{2}}^2-a_0^{R,\delta}(\varphi,\varphi)\big|\ge \textup{dist}(y,\Sigma_\varphi)\|\varphi\|_{\check{L}_\delta^{2}}^2
\ge |y+\lambda|\sin(\theta_1-\theta)\|\varphi\|_{\check{L}_\delta^{2}}^2+\sin(\theta)\ell\|\varphi\|_{\check{W}_\delta^{1,2}}^2.
\end{align*}
This completes the proof.
\end{proof}

Denote by $\Pi_h \varphi\in \mathbb{V}_h(\Gamma^R)$ the Lagrangian first-order interpolation of a function $\varphi:\Gamma^R\to \R$ in the nodes and the vertices. When restricted to each edge $J_k$, the functions in $\mathbb{V}_h(\Gamma^R)$ are piecewise linear.  Thus, we have the following interpolation error estimate  (see, e.g., \cite[Theorem 4.4.4]{BS08})
	\begin{gather}\label{int:L2}
			\|\Pi_h \varphi-\varphi\|_{L^2(\Gamma^R)}\le Ch\|\varphi'\|_{L^2(\Gamma^R)},\\\label{int:H1}
		\|(\Pi_h \varphi-\varphi)'\|_{L^2(\Gamma^R)}\le Ch\|\varphi''\|_{L^2(\Gamma^R)},
		\end{gather}
		provided that the right-hand sides are finite. 
       The following lemma extends the standard interpolation error estimates \eqref{int:L2} and \eqref{int:H1} to weighted Sobolev spaces on the graph $\Gamma^R$.
\begin{lemma}\label{lem:int-L}
Let Assumptions \ref{asp:reg}, \ref{asp:reg-gamma} and \ref{asp:alphader} hold. Then for any fixed $R> H_0$ and $\delta\in(0,\delta_0)$, there exists $C:=C(R,\delta)>0$ such that
\begin{gather}\label{int:L2wgt}
					\|\Pi_h \varphi-\varphi\|_{\check{L}_\delta^{2}}\le C(R,\delta)h\|\varphi\|_{\check{W}_\delta^{1,2}}\quad \forall~\varphi\in\check{W}_\delta^{1,2}, \\\label{eq:H1err}
			\|\Pi_h \varphi-\varphi\|_{\check{W}_\delta^{1,2}}\le C(R,\delta)h(\|\varphi\|_{\check{W}_\delta^{1,2}}+\|\check{\mathcal{L}}^\delta \varphi\|_{\check{L}_\delta^2})\quad\forall~ \varphi\in D(\check{\mathcal{L}}^\delta).
\end{gather}
\end{lemma}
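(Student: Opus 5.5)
The plan is to reduce both estimates to the unweighted interpolation bounds \eqref{int:L2} and \eqref{int:H1}, using that for fixed $R$ and $\delta$ all weights are bounded above and below by positive constants on $\Gamma^R$. By Assumption \ref{asp:reg}, $\mathfrak{c}_1(\delta)\le\alpha_k^{R,\delta}\le \mathfrak{c}_2(\alpha_k+1)\le C(R)$ and $\mathfrak{c}_3\le \beta_k^\delta\le \mathfrak{c}_4(\delta)$. Moreover $c_\gamma(R)\le\gamma_k\le C_\gamma(R)$ by \eqref{eq:gammalowupp}. Hence $\check{L}^2_\delta$ and $L^2(\Gamma^R)$ carry equivalent norms, $\check W^{1,2}_\delta$ is equivalent to $W^{1,2}(\Gamma^R)$ (edgewise), and all constants depend only on $R$ and $\delta$.

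\emph{Estimate \eqref{int:L2wgt}.} I would apply \eqref{int:L2} edgewise; this is legitimate because $\Pi_h$ interpolates at the nodes of each edge, including the vertices, and functions in $\check W^{1,2}_\delta$ are continuous on each closed edge. This gives
\[
\|\Pi_h\varphi-\varphi\|_{\check L^2_\delta}\le C\|\Pi_h\varphi-\varphi\|_{L^2(\Gamma^R)}\le Ch\|\varphi'\|_{L^2(\Gamma^R)}\le C(R,\delta)h\|\varphi\|_{\check W^{1,2}_\delta}.
\]

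\emph{Estimate \eqref{eq:H1err}.} The $L^2$ part follows from the previous step, so it remains to bound $\|(\Pi_h\varphi-\varphi)'\|_{L^2(\Gamma^R)}$. By \eqref{int:H1} it suffices to show $\|\varphi''\|_{L^2(J_k)}\le C(\|\varphi\|_{\check W^{1,2}_\delta}+\|\check{\mathcal L}^\delta\varphi\|_{\check L^2_\delta})$. For $\varphi\in D(\check{\mathcal L}^\delta)$ we have $(\alpha_k^{R,\delta}\varphi')'=2\beta_k^\delta\check{\mathcal L}^\delta\varphi$, so $\|(\alpha_k^{R,\delta}\varphi')'\|_{L^2}\le C\|\check{\mathcal L}^\delta\varphi\|_{\check L^2_\delta}$ since $\beta_k^\delta$ is bounded. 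Using Assumption \ref{asp:alphader}, expand $\alpha_k^{R,\delta}\varphi''=(\alpha_k^{R,\delta}\varphi')'-(\alpha_k^{R,\delta})'\varphi'$ and divide by $\alpha_k^{R,\delta}\ge\mathfrak c_1(\delta)$. The first term is then controlled directly.

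\emph{Main obstacle.} The difficulty is the term $(\alpha_k^{R,\delta})'\varphi'$, since $(\alpha_k^{R,\delta})'$ is only in $L^4$ (it may carry the logarithmic behavior \eqref{eq:deri-alpR} near interior vertices). I would bound it by Hölder as $\|(\alpha_k^{R,\delta})'\|_{L^4}\|\varphi'\|_{L^4}$. To control $\|\varphi'\|_{L^4}$, note that $w:=\alpha_k^{R,\delta}\varphi'\in W^{1,2}(J_k)$, with $\|w\|_{L^2}\le C\|\varphi\|_{\check W^{1,2}_\delta}$ and $\|w'\|_{L^2}\le C\|\check{\mathcal L}^\delta\varphi\|_{\check L^2_\delta}$. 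The one-dimensional Sobolev embedding $W^{1,2}(J_k)\hookrightarrow L^\infty(J_k)$ then gives $\|w\|_{L^\infty}\le C(\|\varphi\|_{\check W^{1,2}_\delta}+\|\check{\mathcal L}^\delta\varphi\|_{\check L^2_\delta})$. Since $\alpha_k^{R,\delta}\ge\mathfrak c_1(\delta)$, this yields $\|\varphi'\|_{L^\infty}\le C(\|\varphi\|_{\check W^{1,2}_\delta}+\|\check{\mathcal L}^\delta\varphi\|_{\check L^2_\delta})$, which is stronger than the $L^4$ bound needed. Combining the pieces gives $\|\varphi''\|_{L^2(J_k)}$ bounded as required. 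Summing over the finitely many edges and converting back to weighted norms gives \eqref{eq:H1err}, with $C=C(R,\delta)$.
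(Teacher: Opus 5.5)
Your proof is correct. Its skeleton matches the paper's:
\begin{itemize}
\item for fixed $R,\delta$, all weights on $\Gamma^R$ are equivalent to constants;
\item both estimates reduce to the unweighted bounds \eqref{int:L2} and \eqref{int:H1};
\item $\varphi''$ is controlled through the identity $\alpha_k^{R,\delta}\varphi''=2\beta_k^\delta\check{\mathcal L}^\delta\varphi-(\alpha_k^{R,\delta})'\varphi'$.
\end{itemize}

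You differ from the paper only in how you handle the lower-order term $(\alpha_k^{R,\delta})'\varphi'$. The paper bounds $\|\varphi'\|_{L^4(J_k)}$ with the Gagliardo--Nirenberg inequality $\|\varphi'\|_{L^4}\le\epsilon\|\varphi''\|_{L^2}+C(\epsilon)\|\varphi'\|_{L^2}$. It then absorbs the $\epsilon\|\varphi''\|_{L^2}$ term into the left-hand side, which tacitly presupposes $\varphi''\in L^2(J_k)$. You instead note that the flux $w=\alpha_k^{R,\delta}\varphi'$ already lies in $W^{1,2}(J_k)$, with norm controlled by $\|\varphi\|_{\check W^{1,2}_\delta}+\|\check{\mathcal L}^\delta\varphi\|_{\check L^2_\delta}$. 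Combining $W^{1,2}(J_k)\hookrightarrow L^\infty(J_k)$ with $\alpha_k^{R,\delta}\ge\mathfrak c_1(\delta)$ then gives an $L^\infty$ bound on $\varphi'$.

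Your route buys three things:
\begin{itemize}
\item No absorption argument is needed.
\item You get a stronger bound on $\varphi'$.
\item You only use $(\alpha_k^{R,\delta})'\in L^2(J_k)$ rather than $L^4(J_k)$, so Assumption \ref{asp:alphader} could be weakened.
\end{itemize}
It also yields the existence of $\varphi''$ directly: $\varphi'=w\cdot(\alpha_k^{R,\delta})^{-1}$ is a product of absolutely continuous functions. You should state this explicitly where you expand $\alpha_k^{R,\delta}\varphi''$, since that step uses the product rule. The paper's argument is the standard elliptic-regularity template and does not rely on an $L^\infty$ embedding for the flux, but in this one-dimensional setting it gains nothing over yours.
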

\begin{proof}

According to Assumption \ref{asp:reg}, for a fixed $\delta\in(0,\delta_0)$ and $R> H _0$,
\begin{gather}\label{eq:equiv}
\mathfrak{c}_1(\delta)\le \alpha_k^{R,\delta}(z)\le CR,\quad
\mathfrak{c}_3\le \beta_k^{\delta}(z)\le \mathfrak{c}_4(\delta)\quad \forall~(z,k)\in\Gamma^R,
\end{gather}
which, along with \eqref{int:L2} and \eqref{eq:gammalowupp}, yields  \eqref{int:L2wgt}.
Similarly, by \eqref{int:H1} and \eqref{eq:equiv} we also have
\begin{equation}\label{int:H1wgt}
	\int_{J_k}\Big|\frac{\ud}{\ud z}(\Pi_h \varphi-\varphi)\Big|^2{\alpha_k^{R,\delta}\gamma_k}\ud z
			\le C(R,\delta)h^2 \int_{J_k}\Big|\frac{\alpha_k^{R,\delta}}{\beta_k^\delta}\frac{\ud^2}{\ud z^2}\varphi\Big|^2\beta^\delta_k\gamma_k\ud z,\quad k=1,\ldots,m.
		\end{equation}	
From \eqref{eq:Ldelta}, Minkowski's inequality, and H\"older's inequality, it follows that
\begin{align}\label{eq:der1}
&\int_{J_k}\Big|\frac{\alpha_k^{R,\delta}}{\beta_k^\delta}\frac{\ud^2}{\ud z^2}\varphi\Big|^2\beta^\delta_k\gamma_k\ud z-8\int_{J_k}|\check{\mathcal{L}}^\delta \varphi|^2\beta^\delta_k\gamma_k\ud z\le2\int_{J_k}\Big|\frac{(\alpha_k^{R,\delta})'}{\beta_k^\delta}\frac{\ud}{\ud z}\varphi\Big|^2\beta^\delta_k\gamma_k\ud z\\\notag
&\le 2\left(\int_{J_k}\frac{|(\alpha_k^{R,\delta})'|^4}{|\beta_k^\delta|^2}\gamma_k^2\ud z\right)^{\frac12}\|\varphi'\|_{L^{4}(J_k)}^2\le C(R,\delta)\|\varphi'\|_{L^{4}(J_k)}^2,
	\end{align}
	where the last step used \eqref{eq:gammalowupp}, \eqref{eq:equiv}, and Assumption \ref{asp:alphader}.
	The Gagliardo--Nirenberg inequality (see, e.g., \cite{GE59}), together with Young's inequality implies that, for any $\epsilon>0$,
\begin{equation}\label{eq:der2}
\|\varphi'\|_{L^{4}(J_k)}\le C\|\varphi''\|_{L^2(J_k)}^{\frac14}\|\varphi'\|_{L^2(J_k)}^{\frac34}+C\|\varphi'\|_{L^2(J_k)}\le \epsilon \|\varphi''\|_{L^2(J_k)}+C(\epsilon)\|\varphi'\|_{L^2(J_k)}.
\end{equation}	
Combining \eqref{eq:der1} and \eqref{eq:der2} yields that for any $\epsilon>0$,
\begin{align*}
&\int_{J_k}\Big|\frac{\alpha_k^{R,\delta}}{\beta_k^\delta}\frac{\ud^2}{\ud z^2}\varphi\Big|^2\beta^\delta_k\gamma_k\ud z-8\int_{J_k}|\check{\mathcal{L}}^\delta \varphi|^2\beta^\delta_k\gamma_k\ud z\\
&\le\epsilon C(R,\delta)  \|\varphi''\|_{L^2(J_k)}^2+C(\epsilon,R,\delta)\|\varphi'\|_{L^2(J_k)}^2\\
&\le \epsilon C_1(R,\delta)\int_{J_k}|\frac{\alpha_k^{R,\delta}}{\beta_k^\delta}\frac{\ud^2}{\ud z^2}\varphi|^2\beta^\delta_k\gamma_k\ud z+C(\epsilon,R,\delta)\|\varphi'\|_{L_{\alpha^{R,\delta}_k\gamma_k}^2(J_k)}^2,
	\end{align*}
	where in the last step we used \eqref{eq:equiv} and \eqref{eq:gammalowupp}.
If we choose $\epsilon>0$ such that $\epsilon C_1(R,\delta)\le \frac12$, then 
	\begin{align}\label{eq:H2Jk}
\int_{J_k}\Big|\frac{\alpha_k^{R,\delta}}{\beta_k^\delta}\frac{\ud^2}{\ud z^2}\varphi\Big|^2\beta^\delta_k\gamma\ud z
&\le 16\int_{J_k}|\check{\mathcal{L}}^\delta \varphi|^2\beta^\delta_k\gamma_k\ud z+C(R,\delta)\|\varphi'\|_{L_{\alpha^{R,\delta}_k\gamma_k}^2(J_k)}^2.
	\end{align}
Inserting 	\eqref{eq:H2Jk} into \eqref{int:H1wgt}, summing over $k=1,\ldots,m$ and then combining \eqref{int:L2wgt} leads to \eqref{eq:H1err}.
\end{proof}

The following lemma provides an estimate of the error between the resolvents of $\check{\mathcal{L}}^\delta$ and its discrete counterpart.

\begin{lemma}\label{lem:res-est}
	Let Assumptions \ref{asp:reg}, \ref{asp:reg-gamma}, and \ref{asp:alphader} hold. Then for any $\lambda>\frac18\kappa_1$, $R> H_0$ and $\delta\in(0,\delta_0)$,  there exists $C:=C(R,\delta,\lambda)>0$ such that for any $y\in \mathcal{Y}_\lambda$ and $\varphi\in \check{L}_\delta^{2}$, 
	\begin{equation*}
		\|w_h-w\|_{\check{L}_\delta^{2}}+ h\|w_h-w\|_{\check{W}_\delta^{1,2}}\le Ch^2\|\varphi\|_{\check{L}_\delta^{2}},
	\end{equation*}
	where $w=(yI+\check{\mathcal{L}}^\delta)^{-1}\varphi$ and $w_h=(yI+\check{\mathcal{L}}^\delta_h)^{-1}\mathcal{P}_h\varphi$.
\end{lemma}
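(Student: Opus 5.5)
Following the resolvent-based approach of \cite{JLZ13}, the plan is to write both resolvent problems in variational form, derive a C\'ea-type quasi-optimality bound in the energy norm from Lemma \ref{lem1}, and then get the extra factor of $h$ in $\check{L}_\delta^2$ from a duality argument for the adjoint resolvent.

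By the definitions of $\check{\mathcal{L}}^\delta$, $\check{\mathcal{L}}^\delta_h$ and $\mathcal{P}_h$, the function $w$ satisfies $y\langle w,\chi\rangle_{\check{L}_\delta^{2}}-a_0^{R,\delta}(w,\chi)=\langle\varphi,\chi\rangle_{\check{L}_\delta^{2}}$ for all $\chi\in W^{1,2}_{\mathrm{cont}}(\Gamma^R)$. The discrete function $w_h$ satisfies the same identity for $\chi\in\mathbb{V}_h(\Gamma^R)$. Set $e:=w_h-w$ and $b_y(\phi,\chi):=y\langle\phi,\chi\rangle_{\check{L}_\delta^{2}}-a_0^{R,\delta}(\phi,\chi)$. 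Then Galerkin orthogonality $b_y(e,\chi)=0$ holds for all $\chi\in\mathbb{V}_h(\Gamma^R)$.

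\emph{Step 1: energy estimate.} Apply Lemma \ref{lem1} to $e$, and write $b_y(e,e)=b_y(e,\Pi_hw-w)$. By \eqref{eq:cont-delta}, $|b_y(e,\chi)|\le C(|y+\lambda|\|e\|_{\check{L}_\delta^{2}}\|\chi\|_{\check{L}_\delta^{2}}+\|e\|_{\check{W}_\delta^{1,2}}\|\chi\|_{\check{W}_\delta^{1,2}})$. Young's inequality then gives
\[
|y+\lambda|\|e\|_{\check{L}_\delta^{2}}^2+\|e\|_{\check{W}_\delta^{1,2}}^2\le C\big(|y+\lambda|\|\Pi_hw-w\|_{\check{L}_\delta^{2}}^2+\|\Pi_hw-w\|_{\check{W}_\delta^{1,2}}^2\big).
\]
Lemma \ref{lem:int-L} bounds the right-hand side by $Ch^2\big(|y+\lambda|\|w\|_{\check{W}_\delta^{1,2}}^2+\|w\|_{\check{W}_\delta^{1,2}}^2+\|\check{\mathcal{L}}^\delta w\|_{\check{L}_\delta^{2}}^2\big)$. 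Lemma \ref{lem1}, applied to $w$ with $b_y(w,w)=\langle\varphi,w\rangle$, gives $|y+\lambda|\|w\|_{\check{L}_\delta^{2}}^2+\|w\|_{\check{W}_\delta^{1,2}}^2\le C\|\varphi\|\|w\|$, and hence $\|w\|\le C|y+\lambda|^{-1}\|\varphi\|$ and $\|w\|_{\check{W}_\delta^{1,2}}^2\le C|y+\lambda|^{-1}\|\varphi\|^2$. Since $\check{\mathcal{L}}^\delta w=yw-\varphi$, we also have $\|\check{\mathcal{L}}^\delta w\|\le C\|\varphi\|$, using $|y|\le |y+\lambda|+\lambda$. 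Together these yield $\|e\|_{\check{W}_\delta^{1,2}}\le Ch\|\varphi\|$ uniformly on $\mathcal{Y}_\lambda$. I expect $|y|$ to stay bounded away from zero on the contour, or at least to cause no blow-up, because the shift by $\lambda$ keeps $|y+\lambda|$ controlled.

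\emph{Step 2: duality.} Let $\psi$ solve the adjoint problem $b_y(\chi,\psi)=\langle\chi,e\rangle_{\check{L}_\delta^{2}}$ for all $\chi$; that is, $\psi=(\bar yI+(\check{\mathcal{L}}^{\delta})^*)^{-1}e$. Then $\|e\|^2=b_y(e,\psi-\Pi_h\psi)\le C(|y+\lambda|\|e\|\|\psi-\Pi_h\psi\|+\|e\|_{\check{W}_\delta^{1,2}}\|\psi-\Pi_h\psi\|_{\check{W}_\delta^{1,2}})$. This step needs Lemma \ref{lem1} and the $H^1$ interpolation estimate \eqref{eq:H1err} for the adjoint operator, and I expect it to be the main obstacle. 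The adjoint form $\overline{a_0^{R,\delta}(\chi,\psi)}$ has the same coercivity and continuity constants, so Lemma \ref{lem1} transfers verbatim. The adjoint operator, however, acts as $\frac{1}{2\beta^\delta\gamma}\frac{\ud}{\ud z}(\alpha^{R,\delta}\gamma\,\psi')$ plus a first-order term involving $\gamma'$, and its domain carries a modified Kirchhoff condition. I would first try to prove $\|\psi''\|_{L^2}\le C(\|(\check{\mathcal{L}}^\delta)^*\psi\|+\|\psi\|_{\check{W}_\delta^{1,2}})$ edgewise, exactly as in the proof of Lemma \ref{lem:int-L}, absorbing the first-order terms with Gagliardo--Nirenberg and the bounds \eqref{eq:equiv}, \eqref{eq:gammalowupp} and Assumption \ref{asp:reg-gamma}. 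This would give $\|\psi-\Pi_h\psi\|_{\check{W}_\delta^{1,2}}\le Ch\|e\|$ and $|y+\lambda|\|\psi-\Pi_h\psi\|\le Ch|y+\lambda|\|\psi\|_{\check{W}_\delta^{1,2}}\le Ch|y+\lambda|^{1/2}\|e\|$.

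The term $|y+\lambda|\|e\|\,h|y+\lambda|^{-1/2}\|e\|$ then needs $|y+\lambda|^{1/2}\|e\|\le Ch\|\varphi\|$, which Step 1 already provides. Altogether $\|e\|^2\le Ch^2\|\varphi\|\|e\|$, so $\|e\|_{\check{L}_\delta^{2}}\le Ch^2\|\varphi\|_{\check{L}_\delta^{2}}$. Combined with Step 1 this proves the lemma.
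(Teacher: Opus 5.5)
Your route is the paper's: Galerkin orthogonality with $\Pi_h w$ inserted into Lemma~\ref{lem1}, resolvent bounds for $w$, then a duality argument. The one difference is minor. You test the dual problem with $\psi-\Pi_h\psi$, whereas the paper compares the adjoint resolvents $v$ and $v_h$ and reuses \eqref{eq:H1norm} for $v-v_h$; both versions need the same ingredients. Two steps need repair.

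\emph{Uniformity on $\mathcal{Y}_\lambda$.} On the contour $|y|\ge\lambda\sin\theta_1>0$, so $|y|$ is harmless. The degenerate point is the vertex $y=-\lambda$, where $|y+\lambda|=0$. Your bounds are $\|w\|_{\check{L}_\delta^{2}}\le C|y+\lambda|^{-1}\|\varphi\|_{\check{L}_\delta^{2}}$ and $\|w\|_{\check{W}_\delta^{1,2}}\le C|y+\lambda|^{-1/2}\|\varphi\|_{\check{L}_\delta^{2}}$. Combined with $|y|\le|y+\lambda|+\lambda$, they only bound $\|\check{\mathcal{L}}^\delta w\|_{\check{L}_\delta^{2}}$ by $C(1+|y+\lambda|^{-1})\|\varphi\|_{\check{L}_\delta^{2}}$, and $\|w\|_{\check{W}_\delta^{1,2}}$ by a multiple of $|y+\lambda|^{-1/2}$. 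Both blow up at the vertex. The fix is already in your inequality. Since $\|w\|_{\check{L}_\delta^{2}}\le\|w\|_{\check{W}_\delta^{1,2}}$, it gives $\|w\|_{\check{L}_\delta^{2}}\le\|w\|_{\check{W}_\delta^{1,2}}\le C\|\varphi\|_{\check{L}_\delta^{2}}$ uniformly in $y$. Then $|y|\|w\|_{\check{L}_\delta^{2}}\le(|y+\lambda|+\lambda)\|w\|_{\check{L}_\delta^{2}}\le C\|\varphi\|_{\check{L}_\delta^{2}}$. The paper reaches the same bound by writing $(-\lambda I+\check{\mathcal{L}}^\delta)w=\varphi-(y+\lambda)w$ and using \eqref{eq:coe-delta}. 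The same correction is needed for the dual solution $\psi$.

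\emph{Adjoint regularity.} You are right that this is the crux. The paper applies the analogue of \eqref{eq:H1norm} to $v-v_h$ without comment, which tacitly requires \eqref{eq:H1err} for the adjoint resolvent. However, redoing \eqref{eq:H2Jk} for the adjoint with only \eqref{eq:equiv}, \eqref{eq:gammalowupp} and Assumption~\ref{asp:reg-gamma} cannot close it. $\check{\mathcal{L}}^\delta$ is symmetric in $L^2_{\beta^\delta}(\Gamma^R)$ and $\gamma$ is continuous at the vertices, so its adjoint in $\check{L}_\delta^{2}$ is $(\check{\mathcal{L}}^\delta)^*=\gamma^{-1}\check{\mathcal{L}}^\delta(\gamma\,\cdot)$. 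The lower-order part is therefore $\frac{1}{2\beta^\delta\gamma}\frac{\ud}{\ud z}(\alpha^{R,\delta}\gamma'\psi)$, which contains $(\alpha^{R,\delta}\gamma')'\psi$ and hence $\gamma''$, not only $\gamma'$.

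The clean argument is as follows. We have $\gamma\psi\in D(\check{\mathcal{L}}^\delta)$ with $\check{\mathcal{L}}^\delta(\gamma\psi)=\gamma(e-\bar y\psi)$. So \eqref{eq:H2Jk} applies verbatim to $\gamma\psi$ and gives $\|(\gamma\psi)''\|_{L^2(J_k)}\le C\|e\|_{\check{L}_\delta^{2}}$ uniformly on $\mathcal{Y}_\lambda$. Turning this into a bound on $\psi''$ requires extra smoothness, for instance $\gamma_k\in W^{2,2}(J_k)$ for every $k$. If $\gamma$ is merely $\mathcal{C}^1$, as in the paper's standing hypotheses, then $\psi'=(\gamma\psi)'/\gamma-\gamma'\psi/\gamma$ need not lie in $W^{1,2}(J_k)$. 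In that case $\|\psi-\Pi_h\psi\|_{\check{W}_\delta^{1,2}}=O(h)$ is not available, and the $h^2$ rate in $\check{L}_\delta^{2}$ is not justified. You should state this extra regularity of $\gamma$ as a hypothesis; the paper's proof needs it too.
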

\begin{proof}
	From the definition of $w$ and $w_h$, it follows that
	\begin{align*}
		y\langle w,\psi\rangle_{\check{L}_\delta^{2}}-a_0^{R,\delta}(w,\psi)
		&=\langle\varphi,\psi\rangle_{\check{L}_\delta^{2}}\quad \forall ~\psi\in W^{1,2}_{\mathrm{cont}}(\Gamma^R),\\
		y\langle w_h,\psi\rangle_{\check{L}_\delta^{2}}-a_0^{R,\delta}(w_h,\psi)&=\langle\varphi,\psi\rangle_{\check{L}_\delta^{2}}\quad \forall ~\psi\in \mathbb{V}_h(\Gamma^R).
	\end{align*}
Since $\mathbb{V}_h(\Gamma^R)\subset W^{1,2}_{\mathrm{cont}}(\Gamma^R)$, subtracting the above two equations yields that $\varpi:=w-w_h$ satisfies
\begin{equation}\label{eq:orth}
	y\langle \varpi,\psi\rangle_{\check{L}_\delta^{2}}-a_0^{R,\delta}(\varpi,\psi)=0\quad\forall~\psi\in \mathbb{V}_h(\Gamma^R).
\end{equation}
Together with Lemma \ref{lem1}, this yields that for any $\psi_1 \in \mathbb{V}_h(\Gamma^R)$,
\begin{align*}
		|y+\lambda|\|\varpi\|_{\check{L}_\delta^{2}}^2 +\|\varpi\|^2_{\check{W}_\delta^{1,2}}&\le C\big|y \|\varpi\|_{\check{L}_\delta^{2}}^2-a_0^{R,\delta}(\varpi,\varpi)\big|\\
		&= C\big|y \langle \varpi,\varpi+(w_h-\psi_1)\rangle_{\check{L}_\delta^{2}}-a_0^{R,\delta}(\varpi,\varpi+(w_h-\psi_1))\big|\\
		&=C\big|y \langle \varpi,w-\psi_1\rangle_{\check{L}_\delta^{2}}-a_0^{R,\delta}(\varpi,w-\psi_1)\big|\\
		&=C\big|(y+\lambda) \langle \varpi,w-\psi_1\rangle_{\check{L}_\delta^{2}}-a_\lambda^{R,\delta}(\varpi,w-\psi_1)\big|.
	\end{align*}		
	In particular, if we choose $\psi_1=\Pi_h w$, then 	by Lemma \ref{lem:int-L} and \eqref{eq:cont-delta},
		\begin{align}\label{eq:yla}
			|y+\lambda|\|\varpi\|_{\check{L}_\delta^{2}}^2 +\|\varpi\|^2_{\check{W}_\delta^{1,2}}&\le C(R,\delta)h|y+\lambda|\|\varpi\|_{\check{L}_\delta^{2}} \|w\|_{\check{W}_\delta^{1,2}}\\\notag
			&\quad+C(R,\delta,\lambda)h\|\varpi\|_{\check{W}_\delta^{1,2}}(\|w\|_{\check{W}_\delta^{1,2}}+\|\check{\mathcal{L}}^\delta w\|_{\check{L}_\delta^2}).
		\end{align}
		Invoking Lemma \ref{lem1} again, we obtain 
			\begin{align*}
			|y+\lambda|\|w\|_{\check{L}_\delta^{2}}^2 +\|w\|^2_{\check{W}_\delta^{1,2}}\le C\big|\langle (yI+\check{\mathcal{L}}^\delta)w,w\rangle_{\check{L}_\delta^{2}}\big|
			\le C\|\varphi\|_{\check{L}_\delta^{2}}\|w\|_{\check{L}_\delta^{2}},
		\end{align*} 
		which reveals that 
			\begin{align}\label{eq:wbound}
			\|w\|_{\check{L}_\delta^{2}}\le C|y+\lambda|^{-1}\|\varphi\|_{\check{L}_\delta^{2}},\quad \|w\|_{\check{W}_\delta^{1,2}}
			\le C|y+\lambda|^{-\frac12}\|\varphi\|_{\check{L}_\delta^{2}}.
		\end{align} 
		
On the other hand, by the first inequality of \eqref{eq:wbound}, along with $w=(yI+\check{\mathcal{L}}^\delta)^{-1}\varphi$, we get
$$\|(-\lambda I+\check{\mathcal{L}}^\delta)w\|_{\check{L}_\delta^{2}}=\|(yI+\check{\mathcal{L}}^\delta)w-(y+\lambda)w\|_{\check{L}_\delta^{2}}\le C(\|\varphi\|_{\check{L}_\delta^{2}}+|y+\lambda|\|w\|_{\check{L}_\delta^{2}})\le C\|\varphi\|_{\check{L}_\delta^{2}}.$$
By the coercivity of $a_{\lambda}^{R,\delta}$ in \eqref{eq:coe-delta}, we also have 
\begin{align*}
	c(\lambda)\|w\|_{\check{W}_\delta^{1,2}}^2\le \Re a_{\lambda}^{R,\delta}(w,w)\le  |a_{\lambda}^{R,\delta}(w,w)|
	= |\langle (\lambda I-\check{\mathcal{L}}^\delta)w,w\rangle_{\check{L}_\delta^{2}}|\le \|w\|_{\check{L}_\delta^{2}}\|\varphi\|_{\check{L}_\delta^{2}}.
\end{align*}
Consequently, it holds that $$\|w\|_{\check{W}_\delta^{1,2}}+\|\check{\mathcal{L}}^\delta w\|_{\check{L}_\delta^{2}}\le (\lambda+1)\|w\|_{\check{W}_\delta^{1,2}}+\|(-\lambda I+\check{\mathcal{L}}^\delta) w\|_{\check{L}_\delta^{2}}\le C\|\varphi\|_{\check{L}_\delta^{2}}.$$
Furthermore, from
 \eqref{eq:yla} and the second inequality of  \eqref{eq:wbound}, we can conclude that 
\begin{align*}
	|y+\lambda|\|\varpi\|_{\check{L}_\delta^{2}}^2 +\|\varpi\|^2_{\check{W}_\delta^{1,2}}
	&\le C(R,\delta)h|y+\lambda|^{\frac12}\| \varpi\|_{\check{L}_\delta^{2}} \|\varphi\|_{\check{L}_\delta^{2}}+C(R,\delta,\lambda)h\|\varpi\|_{\check{W}_\delta^{1,2}}\|\varphi\|_{\check{L}_\delta^{2}}\\
	&\le \frac12|y+\lambda|\|\varpi\|_{\check{L}_\delta^{2}}^2 +\frac12\|\varpi\|^2_{\check{W}_\delta^{1,2}}+ C(R,\delta,\lambda)h^2\|\varphi\|_{\check{L}_\delta^{2}}^2.
	\end{align*}
In other words, we have proven 
\begin{equation}\label{eq:H1norm}
	|y+\lambda|\|\varpi\|_{\check{L}_\delta^{2}}^2 +\|\varpi\|^2_{\check{W}_\delta^{1,2}}
\le C(R,\delta,\lambda)h^2\|\varphi\|_{\check{L}_\delta^{2}}^2.
\end{equation}

Next, we study the $\check{L}_\delta^{2}$-norm of the error $\varpi$ by a duality argument. Given $\varphi_1\in \check{L}_\delta^{2}$, we find $v=(yI+(\check{\mathcal{L}}^\delta)^*)^{-1}\varphi_1$ and $v_h=(yI+(\check{\mathcal{L}}^\delta_h)^*)^{-1}\mathcal{P}_h\varphi_1$ such that
\begin{align}\label{eq:dual1}
	y\langle \psi,v\rangle_{\check{L}_\delta^{2}}-a_0^{R,\delta}(\psi,v)&=-\langle\varphi_1, \psi\rangle_{\check{L}_\delta^{2}},\quad \psi\in W^{1,2}_{\mathrm{cont}}(\Gamma^R),\\\label{eq:dual2}
		y\langle \psi,v\rangle_{\check{L}_\delta^{2}}- a_0^{R,\delta}(\psi,v_h)&=-\langle\varphi_1, \psi\rangle_{\check{L}_\delta^{2}},\quad \psi\in \mathbb{V}_h(\Gamma^R).
\end{align}
Due to \eqref{eq:orth} and \eqref{eq:H1norm}, we have 
\begin{align}\label{eq:dualest}
	|y\langle \varpi,v\rangle_{\check{L}_\delta^{2}}-a_0^{R,\delta}(\varpi,v)|&=|y\langle \varpi,v-v_h\rangle_{\check{L}_\delta^{2}}-a_0^{R,\delta}(\varpi,v-v_h)|\\\notag
	&\le |y+\lambda|\|\varpi\|_{\check{L}_\delta^{2}} \|v-v_h\|_{\check{L}_\delta^{2}} +C\|\varpi\|_{\check{W}_\delta^{1,2}}\|v-v_h\|_{\check{W}_\delta^{1,2}}\\\notag
	&\le C(R,\delta)h^2\|\varphi\|_{\check{L}_\delta^{2}}\|\varphi_1\|_{\check{L}_\delta^{2}}.
\end{align}
Finally, by duality, we infer from \eqref{eq:dual1} and \eqref{eq:dualest} that
\begin{align*}
	\|\varpi\|_{\check{L}_\delta^{2}}=\sup_{\|\varphi_1\|_{\check{L}_\delta^{2}}=1}|\langle \varphi_1,\varpi\rangle_{\check{L}_\delta^{2}}|=\sup_{\|\varphi_1\|_{\check{L}_\delta^{2}}=1}|y\langle \varpi,v\rangle_{\check{L}_\delta^{2}}-a_0^{R,\delta}(\varpi,v)|\le C(R,\delta)h^2\|\varphi\|_{\check{L}_\delta^{2}}.
\end{align*}
The proof is finished.
\end{proof}

\begin{lemma}\label{lem:FEM}
Let Assumptions \ref{asp:reg}, \ref{asp:reg-gamma}, and \ref{asp:alphader} hold, and  $\{u_0,1_\Gamma\}\subset L^2_{\beta\gamma}(\Gamma)$. Then for any $R\ge H_0$, $\delta\in(0,\delta_0)$, and $\vartheta\in(0,\frac12)$, there exists $C:=C(R,\delta,\vartheta)>0$ such that
\begin{align}\label{eq:FEMLdelta}
\E\left[\|\check{u}^\delta(t)-\check{u}^\delta_h(t)\|_{\check{L}_\delta^{2}}^2\right]\le Ch^{4\vartheta}(1+t^{-2\vartheta})\|\check{u}(0)\|_{\check{L}_\delta^{2}}^2.
\end{align}	
\end{lemma}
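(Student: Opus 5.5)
We compare the mild formulations of \eqref{eq:reguarlized} and \eqref{eq:udeltah}. Writing $E_h(t)=e^{t\check{\mathcal{L}}^\delta}-e^{t\check{\mathcal{L}}^\delta_h}\mathcal{P}_h$ as in \eqref{eq:L-Lh}, we obtain
\begin{align*}
\check{u}^\delta(t)-\check{u}^\delta_h(t)&=E_h(t)\check{u}(0)+\int_0^tE_h(t-s)\check{B}(\check{u}^\delta(s))\ud s+\int_0^tE_h(t-s)\check{G}(\check{u}^\delta(s))\ud W(s)\\
&\quad+\int_0^te^{(t-s)\check{\mathcal{L}}^\delta_h}\mathcal{P}_h\big(\check{B}(\check{u}^\delta(s))-\check{B}(\check{u}^\delta_h(s))\big)\ud s\\
&\quad+\int_0^te^{(t-s)\check{\mathcal{L}}^\delta_h}\mathcal{P}_h\big(\check{G}(\check{u}^\delta(s))-\check{G}(\check{u}^\delta_h(s))\big)\ud W(s).
\end{align*}
The last two terms are controlled by the Lipschitz bounds, the $L^2_{\beta^\delta\gamma}$-stability of $\mathcal{P}_h$, and Lemma \ref{lem:analydeltah}(1), giving $C\int_0^t\E\|\check{u}^\delta(s)-\check{u}^\delta_h(s)\|_{\check{L}_\delta^2}^2\ud s$. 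Gronwall's inequality then closes the argument once the first three terms are bounded by $Ch^{4\vartheta}(1+t^{-2\vartheta})$. Here we use the moment bound \eqref{eq:L2delta}, $1_\Gamma\in L^2$, and $\|\check{u}(0)\|_{\check L^2_\delta}$ to absorb constants.

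\textbf{Key deterministic estimate.} We aim to show
\[
\|E_h(t)\varphi\|_{\check{L}_\delta^{2}}\le C\min\{1,h^2t^{-1}\}\|\varphi\|_{\check{L}_\delta^2}\le Ch^{2\vartheta}t^{-\vartheta}\|\varphi\|_{\check{L}_\delta^2}
\]
for $\vartheta\in[0,1]$. For the $h^2t^{-1}$ part, insert Lemma \ref{lem:res-est} into the contour representation \eqref{eq:L-Lh} on $\mathcal{Y}_\lambda$. This gives
\[
\|E_h(t)\varphi\|\le Ch^2\int_{\mathcal{Y}_\lambda}|e^{-yt}||\ud y|\,\|\varphi\|.
\]
The difficulty is that the contour is shifted by $-\lambda$, so $|e^{-yt}|\le e^{\lambda t}e^{-rt\cos\theta_1}$. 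On $[0,T]$ the factor $e^{\lambda t}$ is harmless, and the integral is $O(t^{-1})$. The bound by $1$ follows from stability of both semigroups (growth $e^{\kappa_1 t/8}$ and contractivity of $\mathcal{P}_h$). Interpolating yields the $h^{2\vartheta}t^{-\vartheta}$ bound.

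\textbf{Application to the three terms.}
\begin{itemize}
\item[(i)] For the initial term, the estimate gives $Ch^{4\vartheta}t^{-2\vartheta}\|\check{u}(0)\|^2$.
\item[(ii)] For the drift term, $\int_0^t(t-s)^{-\vartheta}\ud s$ is finite since $\vartheta<1$.
\item[(iii)] For the stochastic term, the It\^o isometry gives
\[
\int_0^t\|E_h(t-s)\check{G}(\check{u}^\delta(s))\|_{\mathscr{L}_2}^2\ud s\le Ch^{4\vartheta}\int_0^t(t-s)^{-2\vartheta}\ud s\,\big(1+\sup_s\E\|\check{u}^\delta(s)\|^2\big).
\]
Here we use that $E_h$ is a bounded operator, so $\|E_hA\|_{\mathscr{L}_2}\le\|E_h\|\|A\|_{\mathscr{L}_2}$. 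This term is integrable exactly when $\vartheta<\frac12$, which explains the restriction in the statement.
\end{itemize}

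\textbf{Main obstacle.} The main obstacle is the Gronwall step with the singular factor $t^{-2\vartheta}$. After the three terms are bounded, $e(t):=\E\|\check{u}^\delta(t)-\check{u}^\delta_h(t)\|^2$ satisfies
\[
e(t)\le Ch^{4\vartheta}(1+t^{-2\vartheta})+C\int_0^te(s)\ud s.
\]
Since $t^{-2\vartheta}$ is integrable for $\vartheta<\frac12$, Gronwall's lemma gives
\[
e(t)\le Ch^{4\vartheta}(1+t^{-2\vartheta})+Ch^{4\vartheta}\int_0^t(1+s^{-2\vartheta})e^{C(t-s)}\ud s,
\]
and the last integral is bounded, so the claim follows. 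I would double-check the growth from the shifted contour and the fact that $\mathcal{P}_h$ is the $\check{L}^2_\delta$-orthogonal projection, so its norm is $1$.
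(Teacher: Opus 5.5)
Your proposal is correct and follows the paper's proof essentially step by step. It uses the same mild-form error decomposition and the same bound $\|E_h(t)\varphi\|_{\check{L}_\delta^{2}}\le Ch^{2\vartheta}t^{-\vartheta}\|\varphi\|_{\check{L}_\delta^{2}}$ from the contour formula \eqref{eq:L-Lh}, Lemma \ref{lem:res-est} and semigroup stability, the same stochastic-convolution estimate that forces $\vartheta<\frac12$, and a Gronwall argument with an integrable singularity. Like the paper, what you actually obtain is the factor $C(1+\|u_0\|_{L^2_{\beta\gamma}(\Gamma)}^2)$ rather than a multiple of $\|\check{u}(0)\|_{\check{L}_\delta^{2}}^2$; the latter cannot hold in general (e.g.\ for $u_0=0$ with $b(0)\neq0$), but the former is all that Theorem \ref{tho:FEM} needs.
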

\begin{proof}
As a consequence of \eqref{eq:L-Lh} and Lemma \ref{lem:res-est}, we have
	\begin{align*}
		\|E_h(t)\varphi\|_{\check{L}_\delta^{2}}\le C\int_{0}^\infty e^{-(r \cos\theta_1-\lambda) t}\|w-w_h\|_{\check{L}_\delta^{2}} \ud r
		&\le C(R,\delta,\lambda)h^2 t^{-1}\|\varphi\|_{\check{L}_\delta^{2}},\quad t\in[0,T].
	\end{align*}
Since $\{e^{t\check{\mathcal{L}}^\delta}\}_{t\ge0}$ and $\{e^{t\check{\mathcal{L}}^\delta_h}\}_{t\ge0}$ are strongly continuous semigroups on $\check{L}_\delta^{2}$ (see Lemmas \ref{lem:analydelta} and \ref{lem:analydeltah}), by interpolation, for any $\vartheta\in[0,1]$ it holds
	\begin{align}\label{eq:Sht}
		\|E_h(t)\varphi\|_{\check{L}_\delta^{2}}\le C(R,\delta,\lambda,T)h^{2\vartheta} t^{-\vartheta}\|\varphi\|_{\check{L}_\delta^{2}},\quad t\in[0,T].
	\end{align}
Subtracting  \eqref{eq:udeltah} from
\eqref{eq:reguarlized} gives that for any $t\in[0,T]$,
\begin{align*}
\check{u}^\delta(t)-\check{u}^\delta_h(t)=E_h(t)\check{u}(0)+\int_0^t E_h(t-s)\check{B}(\check{u}^\delta(s))\ud s+\int_0^t e^{(t-s)\check{\mathcal{L}}^\delta_h}\mathcal{P}_h\big(\check{B}(\check{u}^\delta(s))-\check{B}(\check{u}_h^\delta(s))\big)\ud s\\
+\int_0^tE_h(t-s)\check{G}(\check{u}^\delta(s))\ud W(s)+\int_0^te^{(t-s)\check{\mathcal{L}}^\delta_h}\mathcal{P}_h\big(\check{G}(\check{u}^\delta(s))-\check{G}(\check{u}_h^\delta(s))\big)\ud W(s).
 \end{align*}
It follows from the boundedness of the semigroup generated by the discrete operator $\check{\mathcal{L}}^\delta_h$, the contraction of the projection $\mathcal{P}_h$,
the H\"older inequality, the Burkholder inequality, and \eqref{eq:Sht} that for any $\vartheta\in(0,\frac12)$,
\begin{align*}
\E\left[\|\check{u}^\delta(t)-\check{u}^\delta_h(t)\|^2_{\check{L}_\delta^{2}}\right]&\le C(R,\delta)h^{4\vartheta}t^{-2\vartheta}\|\check{u}(0)\|_{\check{L}_\delta^{2}}^2+
C(R,\delta)h^{4\vartheta}\int_0^t  (t-s)^{-\vartheta}\E\left[\|\check{B}(\check{u}^\delta(s))\|_{\check{L}_\delta^{2}}^2\right]\ud s\\
&\quad+ C\int_0^t\E\left[\|\check{B}(\check{u}^\delta(s))-\check{B}(\check{u}_h^\delta(s))\|_{\check{L}_\delta^{2}}^2\right]\ud s\\
&\quad+C(R,\delta)h^{4\vartheta}\int_0^t  (t-s)^{-2\vartheta}\E\left[\|\check{G}(\check{u}^\delta(s))\|_{\mathscr{L}_2(\mathcal{U}_0,\check{L}_\delta^{2})}^2\right]\ud s\\
&\quad +C\int_0^t\E\left[\|\check{G}(\check{u}^\delta(s))-\check{G}(\check{u}_h^\delta(s))\|^2_{\mathscr{L}_2(\mathcal{U}_0,\check{L}_\delta^{2})}\right]\ud s.
\end{align*}	
By the linear growth and Lipschitz continuity of both $\check{B}$ and $\check{G}$, as well as Lemma \ref{lem:analydelta}, we arrive at
\eqref{eq:FEMLdelta}
 by applying the Gr\"onwall inequality.
\end{proof}
Based on Lemma \ref{lem:FEM}, we now present the proof of Theorem \ref{tho:FEM}.

\textbf{Proof of Theorem \ref{tho:FEM}.}
Applying \eqref{eq:L2beta} leads to 
\begin{align}\label{eq:u-uh}
		\|\check{u}^\delta(t)-\check{u}^\delta_h(t)\|_{L^2_{\beta\gamma}(\Gamma^R)}^2\le C \|\check{u}^\delta(t)-\check{u}^\delta_h(t)\|_{\check{L}_\delta^{2}}\left(\|\check{u}^\delta(t)\|_{\check{W}_\delta^{1,2}}+\|\check{u}^\delta_h(t)\|_{\check{W}_\delta^{1,2}}\right).
	\end{align}
Due to $\check{u}^\delta=v^\delta+r^\delta$, it follows from \eqref{eq:vdeltadelta} and \eqref{eq:rW12} that for any $p\ge2$, 
\begin{align}\label{eq:checkudelta}
\E\bigg[\|\check{u}^\delta(t)\|_{\check{W}_\delta^{1,2}}^p\bigg]\le C(R,p).
\end{align}
Similarly, it can be shown that $p\ge2$, 
\begin{align}\label{eq:checkudeltah}
\E\bigg[\|\check{u}^\delta_h(t)\|_{\check{W}_\delta^{1,2}}^p\bigg]\le C(R,p).
\end{align}
Taking expectations on both sides of \eqref{eq:u-uh}, and then using Lemma \ref{lem:FEM}, \eqref{eq:checkudelta}, \eqref{eq:checkudeltah}, we infer from the H\"older inequality that for any $\vartheta\in(0,\frac12)$
\begin{align*}
		\E\left[\|\check{u}^\delta(t)-\check{u}^\delta_h(t)\|_{L^2_{\beta\gamma}(\Gamma^R)}^2\right]\le C \left(\E\left[\|\check{u}^\delta(t)-\check{u}^\delta_h(t)\|_{\check{L}_\delta^{2}}^2\right]\right)^{\frac12}\le Ch^{2\vartheta}(1+t^{-\vartheta}),
	\end{align*}
	which completes the proof of Theorem \ref{tho:FEM}.
\hfill$\square$

\bibliographystyle{abbrv}
\bibliography{references}
\end{document}